\newtheorem{thm}{Theorem}[section]
\newtheorem{prop}[thm]{Proposition}
\newtheorem{lem}[thm]{Lemma}
\newtheorem{cor}[thm]{Corollary}
\newtheorem{con}[thm]{Conjecture}
\theoremstyle{definition}
\newtheorem{definition}[thm]{Definition}
\theoremstyle{remark}
\numberwithin{equation}{section}
\begin{document}


\title{Erd\H{o}s-Ko-Rado for Perfect Matchings}


\author{Nathan Lindzey}
\address{Department of Mathematics,
Colorado State University}
\email{lindzey@uwaterloo.ca}
\urladdr{www.cs.colostate.edu/$\sim$lindzey} 



\begin{abstract}
A \emph{perfect matching} of a complete graph $K_{2n}$ is a 1-regular subgraph that contains all the vertices.  Two perfect matchings \emph{intersect} if they share an edge.  It is known that if $\mathcal{F}$ is family of intersecting perfect matchings of $K_{2n}$, then $|\mathcal{F}|  \leq (2(n-1) - 1)!!$ and if equality holds, then $\mathcal{F} = \mathcal{F}_{ij}$ where $ \mathcal{F}_{ij}$ is the family of all perfect matchings of $K_{2n}$ that contain some fixed edge $ij$.  We give a short algebraic proof of this result, resolving a question of Godsil and Meagher.  Along the way, we show that if a family $\mathcal{F}$ is \emph{non-Hamiltonian}, that is, $m \cup m' \not \cong C_{2n}$ for any $m,m' \in \mathcal{F}$, then $|\mathcal{F}| \leq (2(n-1) - 1)!!$ and this bound is met with equality if and only if $\mathcal{F} = \mathcal{F}_{ij}$.  Our results make ample use of a somewhat understudied symmetric commutative association scheme arising from the Gelfand pair $(S_{2n},S_2 \wr S_n)$.  We give an exposition of a few new interesting objects that live in this scheme as they pertain to our results.
\end{abstract}


\maketitle




\section{Introduction}

Let $\mathcal{M}_{2n}$ be the collection of all perfect matchings of $K_{2n}$, the complete graph on an even number of vertices.  In this work, we investigate families of perfect matchings $\mathcal{F} \subseteq \mathcal{M}_{2n}$ that are \emph{intersecting}, that is, $|m \cap m'| > 0$ $\forall m,m' \in \mathcal{F}$.  In particular, we seek to characterize the largest intersecting families of perfect matchings of $K_{2n}$.  Obvious candidates are families whose members all share a fixed edge:  
\[ \mathcal{F}_{ij} := \{ m \in \mathcal{M}_{2n} : \{i,j\} \in m\} \]
where $i,j \in [2n] := \{1,2,\cdots , 2n\}$, $i \neq j$.
Such a family will be referred to as \emph{trivially intersecting}.  We give a short algebraic proof of the following result.
\begin{thm}\label{thm:ekr}
If $\mathcal{F}$ is an intersecting family of perfect matchings of $K_{2n}$, then $|\mathcal{F}|  \leq (2(n-1) - 1)!!$.  Moreover, $|\mathcal{F}| =(2(n-1) - 1)!!$ if and only if $\mathcal{F} = \mathcal{F}_{ij}$ for some $i,j \in [2n]$ such that $i \neq j$.
\end{thm}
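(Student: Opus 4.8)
The plan is to derive Theorem~\ref{thm:ekr} from the statement about non-Hamiltonian families: an intersecting family is automatically non-Hamiltonian, since if $m,m'$ share an edge $\{i,j\}$ then $\{i,j\}$ spans a component of $m\cup m'$, so $m\cup m'\not\cong C_{2n}$ for $n\ge 2$; and each $\mathcal{F}_{ij}$ is visibly intersecting. So it suffices to show that a non-Hamiltonian family $\mathcal{F}$ has $|\mathcal{F}|\le(2(n-1)-1)!!$, with equality only for the $\mathcal{F}_{ij}$. Let $\Gamma$ be the graph on $\mathcal{M}_{2n}$ with $m\sim m'$ iff $m\cup m'\cong C_{2n}$; its cocliques are exactly the non-Hamiltonian families. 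Now $\Gamma$ is the relation $\lambda=(n)$ of the commutative association scheme on $\mathcal{M}_{2n}$ attached to the Gelfand pair $(S_{2n},S_2\wr S_n)$: its relations are indexed by $\lambda\vdash n$ (a pair $m,m'$ is in relation $\lambda$ iff $m\cup m'$ is a disjoint union of cycles of lengths $2\lambda_1\ge2\lambda_2\ge\cdots$), and its eigenspaces are the $S_{2n}$-irreducibles $V_{2\mu}\cong S^{2\mu}$, $\mu\vdash n$, so that $\mathbf{C}[\mathcal{M}_{2n}]=\bigoplus_{\mu\vdash n}V_{2\mu}$. A short count (cyclic arrangements of the $n$ edges of a fixed matching, with orientations, up to reversal) shows $\Gamma$ is $d$-regular with $d=2^{n-1}(n-1)!$.

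The first substantive step is to compute the eigenvalue of the adjacency matrix $A$ of $\Gamma$ on each $V_{2\mu}$ via the zonal spherical functions of the scheme, and to check that its least eigenvalue is $\tau=-2^{n-2}(n-2)!$, attained on $V_{(2n-2,2)}$ (the smallest nontrivial eigenspace; one verifies this directly for small $n$, e.g.\ $\Gamma\cong K_3$ when $n=2$). Then $d/(-\tau)=2(n-1)$, and the ratio bound gives, for any coclique $S$ of $\Gamma$,
\[
|S|\ \le\ \frac{|\mathcal{M}_{2n}|}{1-d/\tau}\ =\ \frac{(2n-1)!!}{2n-1}\ =\ (2(n-1)-1)!!,
\]
which is the asserted bound; each $\mathcal{F}_{ij}$ attains it because deleting the edge $\{i,j\}$ and its endpoints identifies $\mathcal{F}_{ij}$ with $\mathcal{M}_{2(n-1)}$.

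For the equality case, tightness of the ratio bound forces $v_S-\tfrac{|S|}{|\mathcal{M}_{2n}|}\mathbf{1}\in V_{(2n-2,2)}$ for the characteristic vector $v_S$ of a maximum coclique $S$, i.e.\ $v_S\in V_{(2n)}\oplus V_{(2n-2,2)}$. I would then describe this subspace explicitly: the $S_{2n}$-equivariant map $\phi\colon\mathbf{C}[\,\binom{[2n]}{2}\,]\to\mathbf{C}[\mathcal{M}_{2n}]$ with $\phi(h)(m)=\sum_{e\in m}h(e)$ kills the copy of $S^{(2n-1,1)}$ (which does not occur in $\mathbf{C}[\mathcal{M}_{2n}]$, whose constituents are indexed by even partitions) and has image exactly $V_{(2n)}\oplus V_{(2n-2,2)}$; hence $v_S=\phi(h)$ for some edge weighting $h$, unique modulo the ``vertex-additive'' functions $\{a,b\}\mapsto p(a)+p(b)$ that make up $\ker\phi$. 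It then remains to prove the rigidity statement: if $h\colon\binom{[2n]}{2}\to\mathbf{C}$ has $\sum_{e\in m}h(e)\in\{0,1\}$ for every perfect matching $m$ and $\sum_m\phi(h)(m)=(2(n-1)-1)!!$, then $h\equiv\mathbf{1}_e\pmod{\ker\phi}$ for some edge $e$, so $S=\mathcal{F}_e$. The usable handle is that two matchings differing by a single $2$-switch on a quadruple $\{a,b,c,d\}$ force $h(ab)+h(cd)-h(ac)-h(bd)\in\{-1,0,1\}$ for all distinct $a,b,c,d$; ordering the edge-weights and propagating these local inequalities — the weight condition ruling out the constant solutions — should pin $h$ down to an edge indicator. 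This rigidity argument, together with the spectral computation for $\Gamma$, are the two places where I expect the real work to lie; everything else is routine once the least eigenvalue of $\Gamma$ is in hand.
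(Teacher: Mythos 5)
Your plan has the right skeleton and correctly reduces the intersecting case to the non-Hamiltonian one, works with the $(n)$-associate $A_{(n)}$ of the matching scheme, and your numbers check out: $|\Omega_{(n)}|=2^{n-1}(n-1)!$, $\tau=-|H_{n-2}|=-2^{n-2}(n-2)!$, ratio $2(n-1)$, quotient $(2n-3)!!$. Up to the module containment $v_S\in S^{2(n)}\oplus S^{2(n-1,1)}$ your route (ratio bound with equality, Hoffman's theorem forcing $v_S-\tfrac{|S|}{N!!}\mathbf 1$ into the $\tau$-eigenspace) is a legitimate alternative to the paper's, which instead invokes the clique–coclique bound with a Lucas $1$-factorization and kills $E_\lambda v_S$ for $\lambda\ne(n),(n-1,1)$ via zonal spherical function estimates; the two routes are comparable in effort and both ultimately rest on the same Diaconis–Lander product formula for $\omega_\lambda^{(n)}$ and on showing $(n-1,1)$ uniquely attains the extreme value. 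Your description of the image of the averaging map $\phi\colon\mathbf C[\binom{[2n]}{2}]\to\mathbf C[\mathcal M_{2n}]$ as exactly $S^{2(n)}\oplus S^{2(n-1,1)}$ also matches the paper's dimension count $\binom{2n}{2}-2n+1$.

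The genuine gap is the final rigidity step. You reduce to: $h\colon\binom{[2n]}{2}\to\mathbf C$ with $\sum_{e\in m}h(e)\in\{0,1\}$ for all $m$, total mass $(2(n-1)-1)!!$, implies $h\equiv\mathbf 1_e\pmod{\ker\phi}$ — and then offer only the $2$-switch inequality $h(ab)+h(cd)-h(ac)-h(bd)\in\{-1,0,1\}$ together with ``ordering the edge-weights and propagating'' as a hope. That is not a proof, and it is precisely the point where the paper does the remaining real work: it passes to the perfect-matching polytope $PM(K_{2n})$, observes that the $0$-level and $1$-level sets of $z=\phi(h)$ are a pair of parallel faces partitioning the vertex set (Lemma~\ref{lem:parallelFaces}), invokes the Edmonds–Lovász–Pulleyblank facet classification of $PM(K_{2n})$ into edge facets $F_e$ and odd-cut facets $F_C$, and rules out the odd-cut case by the counting estimate $(2n-1)!!-s!!(2n-s)!!>(2(n-1)-1)!!$. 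Without that (or some equally complete substitute), your argument establishes the bound and the module containment but does not prove the uniqueness of the extremal families. You should either carry out the $2$-switch propagation in full — being careful that $h$ is only determined modulo the vertex-additive kernel of $\phi$, so you must quotient by that ambiguity before ``ordering edge-weights'' can mean anything — or adopt the polytope-facet argument, which is the paper's resolution of exactly this difficulty.
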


Our result makes use of \emph{the module method}, a proof technique introduced in~\cite{NewmanPhD} to give short algebraic proofs of EKR theorems for sets and vector spaces, and used by Godsil and Meagher in~\cite{GodsilM09} to give a short algebraic proof of Theorem~\ref{thm:EKRPermutations}.
\begin{thm}\label{thm:EKRPermutations}
If $\mathcal{F}$ is an intersecting family of perfect matchings of the complete bipartite graph $K_{n,n}$, then $|\mathcal{F}|  \leq (n-1)!$.  Moreover, $|\mathcal{F}| = (n-1)!$ if and only if all members of $\mathcal{F}$ share a fixed edge.
\end{thm}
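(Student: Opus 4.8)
The plan is to recast the problem in terms of the \emph{derangement graph} and then run the ratio bound together with the module method, in the same spirit as the proof of Theorem~\ref{thm:ekr} later in the paper. Identify a perfect matching of $K_{n,n}$ with the permutation $\sigma\in S_n$ sending a left vertex $i$ to the right vertex $\sigma(i)$ it is matched to; then two matchings share an edge precisely when the two permutations agree in some coordinate, i.e.\ when $\sigma\tau^{-1}$ has a fixed point. Hence an intersecting family corresponds to a coclique (independent set) of the \emph{derangement graph} $\Gamma_n$, the Cayley graph on $S_n$ whose connection set is the set $D_n$ of fixed-point-free permutations, and the families ``through a fixed edge'' correspond to the canonical cocliques $S_{ij}:=\{\sigma\in S_n:\sigma(i)=j\}$, each of size $(n-1)!$. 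Since $D_n$ is closed under conjugation, $\Gamma_n$ is a normal Cayley graph, so its eigenvalues are indexed by the irreducible characters $\chi^\lambda$ ($\lambda\vdash n$), the eigenvalue for $\lambda$ being $\eta_\lambda:=\chi^\lambda(1)^{-1}\sum_{d\in D_n}\chi^\lambda(d)$; the valency is $\eta_{(n)}=|D_n|=:d_n$, the $n$-th derangement number.

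The first task is to identify the least eigenvalue of $\Gamma_n$. Inclusion--exclusion on fixed points gives
\[
\sum_{d\in D_n}\chi^\lambda(d)\;=\;\sum_{k=0}^{n}(-1)^k\binom{n}{k}\sum_{\tau\in S_{n-k}}\chi^\lambda(\tau),
\]
where $S_{n-k}\le S_n$ is the pointwise stabiliser of a $k$-subset of $[n]$ and the inner sum equals $(n-k)!$ times the multiplicity of the trivial character in $\chi^\lambda|_{S_{n-k}}$, a nonnegative integer computed by branching. For $\lambda=(n)$ this recovers the valency $d_n$; for $\lambda=(n-1,1)$ it is cleaner to use $\chi^{(n-1,1)}(\sigma)=\mathrm{fix}(\sigma)-1$, whence $\sum_{d\in D_n}\chi^{(n-1,1)}(d)=-d_n$ and so $\eta_{(n-1,1)}=-d_n/(n-1)$. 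The crux of the whole argument is the estimate that this is the \emph{least} eigenvalue, i.e.\ $\eta_\lambda\ge -d_n/(n-1)$ for all $\lambda\vdash n$ (with equality only for $\lambda=(n-1,1)$); this is a representation-theoretic bound on character sums over derangements, due to Renteln and to Ku--Wales, which I would cite or reprove from the branching formula above. Granting it, the ratio (Hoffman) bound applied to the $d_n$-regular graph $\Gamma_n$ on $n!$ vertices with least eigenvalue $\tau=-d_n/(n-1)$ yields, for every coclique $\mathcal{F}$,
\[
|\mathcal{F}|\;\le\;n!\cdot\frac{-\tau}{d_n-\tau}\;=\;n!\cdot\frac{d_n/(n-1)}{d_n+d_n/(n-1)}\;=\;\frac{n!}{n}\;=\;(n-1)!,
\]
which is the asserted bound.

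For the characterisation I would invoke the module method. Equality in the ratio bound forces $v_{\mathcal{F}}-\tfrac1n\mathbf{1}$ to be a $\tau$-eigenvector, hence to lie in the $S^{(n-1,1)}$-isotypic component $U_{(n-1,1)}\subseteq\mathbf{C}[S_n]$ (of dimension $(n-1)^2$). On the other hand each $S_{ij}$ is a coset of a point stabiliser, so the $S_n$-module it generates is isomorphic to the permutation module on $[n]$, which decomposes as $S^{(n)}\oplus S^{(n-1,1)}$; consequently all the indicator vectors $v_{ij}$ lie in $U_{(n)}\oplus U_{(n-1,1)}$, and a dimension count ($1+(n-1)^2$) shows they span it. Therefore $v_{\mathcal{F}}=\sum_{i,j}a_{ij}v_{ij}$ for real coefficients $a_{ij}$, equivalently $v_{\mathcal{F}}(\sigma)=\sum_{i}a_{i\sigma(i)}$ for all $\sigma\in S_n$. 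It remains to show that a $\{0,1\}$-valued function of this form with mean $1/n$ must be the indicator of a single $S_{i_0j_0}$; this is a self-contained problem about ``doubly balanced'' $n\times n$ coefficient matrices, and apart from the least-eigenvalue estimate this combinatorial endgame is the part demanding the most care (it is where small $n$ must be handled directly in the classical proofs). Completing it identifies $\mathcal{F}$ with $S_{i_0j_0}$, i.e.\ with the family of all matchings of $K_{n,n}$ using a fixed edge.
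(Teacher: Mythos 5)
The paper does not prove this statement itself; it cites it from Godsil--Meagher~\cite{GodsilM09}, whose strategy Sections~5--6 of the present paper generalise to the matching setting, so the comparison is really with the cited proof. Your proposal shares the module-method scaffolding but obtains the two key inputs by a genuinely different route. You deduce the bound $(n-1)!$ from the ratio bound, which requires knowing that the least eigenvalue of the derangement graph is $-d_n/(n-1)$ \emph{and} that it is attained only by $\lambda=(n-1,1)$; this is the Renteln/Ku--Wales theorem, a substantial result that postdates~\cite{GodsilM09}, and your remark about ``reproving it from the branching formula'' understates the difficulty considerably. Godsil--Meagher instead take the clique--coclique route mirrored here in Theorem~\ref{thm:coclique}, Corollary~\ref{cor:nonzero}, and Theorem~\ref{thm:standardModule}: a Latin square furnishes a clique of size $n$ meeting the Delsarte bound with equality, and a direct estimate of the character sums of $\chi^\lambda$ over that clique forces $E_\lambda v_{\mathcal{F}}=0$ for all $\lambda\neq(n),(n-1,1)$, with no knowledge of the least eigenvalue of the derangement graph required at all. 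The trade-off is that your route imports a heavy external eigenvalue theorem but then proceeds very quickly, whereas theirs needs only the existence of a Latin square and some elementary character estimates but requires the clique-coclique machinery. One further caveat: your ``combinatorial endgame'' --- that a $\{0,1\}$-valued function $v_{\mathcal{F}}(\sigma)=\sum_i a_{i\sigma(i)}$ of the right weight must equal a single $v_{ij}$ --- is not a routine verification; it is exactly where Godsil--Meagher invoke the facet structure of the Birkhoff polytope (the analogue of Section~6 of this paper and Lemma~\ref{lem:parallelFaces}), and leaving it flagged as ``demanding care'' leaves a real hole in the uniqueness half of the theorem, which is the part the module method exists to deliver.
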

In~\cite{GodsilM09} it was asked whether the module method could be used to prove Theorem~\ref{thm:ekr}, which we settle affirmatively in this work.  Our proof is similar to Godsil and Meagher's and can be seen as the non-bipartite analogue of their result.  Many of the bipartite objects that arise in their proof and other algebraic proofs of Theorem~\ref{thm:EKRPermutations} are well-studied or have since been recognized as interesting, so it is reasonable to assume that our non-bipartite objects may also be of independent interest.  
In particular, we introduce \emph{the matching derangement graph} which can be seen as the non-bipartite analogue of the \emph{the permutation derangement graph}, a central object of several EKR results that has recently enjoyed some attention outside its EKR milieu~\cite{KuW10,KuW13,Renteln07}.  We put forth a few conjectures regarding the spectrum of the matching derangement graph that are analogues of known results of the permutation derangement graph.  
Also, the non-bipartite analogue of the Birkhoff polytope arises in our work, which had not been the subject of serious study until recently in~\cite{Rothvoss14} where it was used to show that not every LP problem with an exponential number of constraints in complexity class $P$ can be expressed as an LP with polynomially many constraints. 

Since a perfect matching of $K_{2n}$ can be seen as a $n/2$-uniform partition of $[2n]$, a combinatorial proof Theorem~\ref{thm:ekr} was first given by Meagher and Moura via the EKR theorem for intersecting families of $k$-uniform partitions~\cite{MeagherM05}. The case where $k = n/2$ arises as a special case in their proof and is the most difficult part of their result.  More recently, there has been some activity on the combinatorial front towards proving the more general \emph{full} EKR conjecture for \emph{$t$-intersecting} families of perfect matchings, that is, $\mathcal{F} \subseteq \mathcal{M}_{2n}$ such that $|m \cap m'| > t$ $\forall m,m' \in \mathcal{F}$.  We say that a family is \emph{trivially t-intersecting} if it is of the following form:
\[ \mathcal{F}_{T} := \{ m \in \mathcal{M}_{2n} : T \subseteq m\} \]
where $T$ is a collection of disjoint 2-sets of $[2n]$ of size $t$.
\begin{con}\label{con:fullekr}
If $\mathcal{F}$ is a $t$-intersecting family of perfect matchings of the complete graph $K_{2n}$, then $|\mathcal{F}|  \leq (2(n-t) - 1)!!$.  Moreover, $|\mathcal{F}| =(2(n-t) - 1)!!$ if and only if $\mathcal{F}$ is a trivially $t$-intersecting family.
\end{con}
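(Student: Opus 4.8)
The plan is to adapt the module method behind Theorem~\ref{thm:ekr} (the case $t = 1$) to general $t$, working inside the commutative association scheme of the Gelfand pair $(S_{2n}, S_2 \wr S_n)$. Recall that its relations are indexed by partitions $\mu \vdash n$: two perfect matchings $m$ and $m'$ stand in relation $\mu$ precisely when the multigraph $m \cup m'$ is a disjoint union of even cycles whose half-lengths are the parts of $\mu$, and then $|m \cap m'|$ is the number of parts of $\mu$ equal to $1$. Hence a $t$-intersecting family is exactly a coclique in the graph $X_{<t}$ whose adjacency matrix is $\sum_\mu A_\mu$, the sum running over partitions of $n$ with fewer than $t$ parts equal to $1$. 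The eigenspaces of the scheme are the irreducible $S_{2n}$-modules $V_{2\lambda}$, one for each $\lambda \vdash n$, and the conjectured extremal families $\mathcal{F}_T$ have characteristic vectors in the ``low-level'' subspace $U_t := \bigoplus_{\lambda_1 \geq n - t} V_{2\lambda}$, exactly as a $t$-intersecting star of $k$-sets has characteristic vector in $V_0 \oplus \cdots \oplus V_t$ inside the Johnson scheme.

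With this in place, the first step is the cardinality bound, via a weighted ratio (Hoffman) bound. One chooses nonnegative weights $w_\mu$ on the forbidden relations so that $B := \sum_\mu w_\mu A_\mu$ lies in the Bose--Mesner algebra with constant row sum $w$, has largest eigenvalue $w$ on the all-ones vector $\mathbf{1}$, and has least eigenvalue $\tau < 0$ read off the scheme's first eigenmatrix; the ratio bound then gives $|\mathcal{F}| \leq (2n - 1)!! \cdot (-\tau)/(w - \tau)$. The weights must be chosen --- this is the analogue of selecting the correct positive combination of Johnson-scheme classes in the $t$-intersecting EKR theorem for sets --- so that this expression collapses to $(2(n - t) - 1)!!$ and so that the $\tau$-eigenspace of $B$ is exactly $\bigoplus_{\lambda_1 = n - t} V_{2\lambda}$. (Alternatively one can try the clique--coclique bound, using a ``resolvable'' family of $(2n-1)!!/(2(n-t)-1)!!$ pairwise $(<t)$-intersecting matchings as a clique; for $t = 1$ this is just a $1$-factorization of $K_{2n}$, but for $t \geq 2$ the existence of such a family is itself unclear --- compare the role of sharply transitive sets for $t$-intersecting permutations --- so the weighted ratio bound looks more robust.)

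Granting the bound, the equality case runs through the standard module-method dichotomy applied to $v_{\mathcal{F}}$: a family of size $(2(n - t) - 1)!!$ has $v_{\mathcal{F}} - \tfrac{|\mathcal{F}|}{(2n-1)!!}\mathbf{1}$ in the $\tau$-eigenspace of $B$, hence $v_{\mathcal{F}} \in U_t$; one then identifies $U_t$ with the span inside $\mathbf{C}[\mathcal{M}_{2n}]$ of the characteristic vectors of the $t$-element partial matchings of $K_{2n}$, i.e. the image of the natural ``contains'' incidence map. A rank-and-counting argument --- the non-bipartite counterpart of Godsil and Meagher's, and of the $0/1$-rigidity arguments familiar from the theory of combinatorial designs --- should then force a $0/1$ vector of $U_t$ to be the characteristic vector of some $\mathcal{F}_T$. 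As with the non-Hamiltonian strengthening recorded for $t = 1$, a useful intermediate target is a ``$t$-non-Hamiltonian'' relaxation: forbid only a suitable small subfamily of relations (those that actually witness the least eigenvalue $\tau$) rather than all partitions with fewer than $t$ shared edges, which is formally weaker but should still yield both the bound and the characterization.

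The principal obstacle is spectral. The eigenvalues of the matrices $A_\mu$ are values of the zonal spherical functions of $(S_{2n}, S_2 \wr S_n)$, equivalently specializations at $\alpha = 2$ of Jack symmetric functions, and no closed form --- and, crucially, no usable sign or monotonicity information --- is known for them in general; this is exactly why the spectrum of the matching derangement graph, which is the $t = 1$ instance of $X_{<t}$, appears only conjecturally in this paper. Without such control one cannot verify that the proposed $B$ is positive semidefinite after the appropriate shift, that its least eigenvalue equals the claimed $\tau$, or that its $\tau$-eigenspace is precisely $\bigoplus_{\lambda_1 = n - t} V_{2\lambda}$, so the whole argument is at present conditional on spectral input that is open already when $t = 1$. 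A secondary difficulty is that perfect matchings admit no shifting or compression operation, which rules out the usual combinatorial reduction from general $t$ to $t = 1$ and forces the equality analysis to be carried out directly inside the scheme.
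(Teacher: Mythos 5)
The statement you address is Conjecture~\ref{con:fullekr}, which the paper does not prove --- it is explicitly posed as an open problem. There is therefore no ``paper's own proof'' against which to check you; the paper records only that the conjecture has resisted combinatorial attack and, in the closing section, gestures toward adapting the method of Ellis, Friedgut, and Pilpel over the Gelfand pair $(S_{2n}, S_2 \wr S_n)$, which would at best yield the result for sufficiently large $n$. So the honest verdict is that your proposal cannot be a proof, and indeed you do not claim it to be one.

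Granting that, your sketch is an accurate account of what an algebraic attack would look like, and you correctly diagnose why it cannot currently be completed. The setup (cocliques of a union of associates in the matching scheme, eigenspaces $S^{2\lambda}$ indexed by $\lambda \vdash n$, conjectured extremal families lying in $\bigoplus_{\lambda_1 \geq n-t} S^{2\lambda}$) is the right analogue of the Johnson-scheme picture, and your identified obstacle --- the absence of usable information about the zonal spherical functions of $(S_{2n}, S_2 \wr S_n)$, already at $t = 1$ --- matches the state of play: the paper itself only conjectures the least eigenvalue of the matching derangement graph. Two remarks on your proposed route. First, you prefer a weighted Hoffman bound to the clique--coclique bound on the grounds that large pairwise $(<t)$-intersecting cliques may not exist for $t \geq 2$; this is a sensible hedge, but note it departs from the mechanism behind the paper's $t = 1$ proof, which leans essentially on clique--coclique equality via Lucas cliques and Corollary~\ref{cor:nonzero}, so some of that machinery (in particular Corollary~\ref{cor:nonzero}) would not be available to you. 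Second, even with full spectral control, the passage from ``$0/1$ vector in the low-level module'' to ``$\mathcal{F} = \mathcal{F}_T$'' is not a formality: for $t = 1$ this step is the entire content of Theorem~\ref{thm:polytope} and depends on the Edmonds--Lov\'asz--Pulleyblank description of the perfect matching polytope's facets, and there is no known analogue for the higher ``$t$-partial-matching'' incidence geometry you would need. In short, what you have written is a fair summary of why the conjecture is hard, not a proof of it --- which is exactly what one should expect for an open problem.
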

\noindent This conjecture has resisted such combinatorial attacks, which is not too surprising as there is no known combinatorial proof of the following analogous result for perfect matchings of the complete bipartite graph $K_{n,n}$.
\begin{thm}
\cite{EllisFP11} If $\mathcal{F}$ is a $t$-intersecting family of perfect matchings of the complete bipartite graph $K_{n,n}$, then $|\mathcal{F}|  \leq (n-t)!$ for sufficiently large $n$.  Moreover, $|\mathcal{F}| = (n-t)!$ if and only if all members of $\mathcal{F}$ share a fixed set of $t$ disjoint edges.
\end{thm}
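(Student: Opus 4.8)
The plan is to follow the analytic and representation-theoretic route of Ellis, Friedgut, and Pilpel~\cite{EllisFP11}, using the bijection between perfect matchings of $K_{n,n}$ and permutations in $S_n$: a $t$-intersecting family then becomes a set $\mathcal{F} \subseteq S_n$ in which any two permutations agree in at least $t$ coordinates, equivalently an independent set in the Cayley graph $\Gamma$ on $S_n$ whose connection set is the union of all conjugacy classes consisting of permutations with fewer than $t$ fixed points (two permutations $\sigma,\tau$ conflict exactly when $\sigma^{-1}\tau$ has fewer than $t$ fixed points). Since the adjacency matrices of the Cayley graphs attached to individual conjugacy classes lie in the center of $\mathbf{C}[S_n]$, they commute and are simultaneously diagonalized by the isotypic decomposition $\mathbf{C}[S_n] = \bigoplus_{\lambda \vdash n} V_\lambda$; the class $C$ (with representative $c$) acts on $V_\lambda$ by the scalar $|C|\,\chi_\lambda(c)/\chi_\lambda(1)$.

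First I would establish the bound $|\mathcal{F}| \le (n-t)!$ via a weighted Hoffman (ratio) bound. The goal is a real combination $A = \sum_\alpha c_\alpha A_\alpha$ of the class matrices supported on classes with fewer than $t$ fixed points such that: (i) $A$ has constant row sum $d$; (ii) the least eigenvalue of $A$ is $-d/(M-1)$ with $M = n!/(n-t)!$, so that the Hoffman bound $|\mathcal{F}| \le n! \cdot \frac{-\lambda_{\min}}{\,d - \lambda_{\min}\,}$ evaluates to exactly $(n-t)!$; and (iii) the minimum eigenvalue of $A$ is attained precisely on the isotypic components $V_\lambda$ with $\lambda_1 \ge n-t$. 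Exhibiting such weights and checking (ii)--(iii) rests on asymptotic bounds for the characters of $S_n$ (of Fomin--Lulov / Roichman type), which force the normalized characters of permutations with few fixed points to be small; this is where the assumption that $n$ be large first enters.

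For the characterization of equality I would combine spectral stability with a junta approximation. If $|\mathcal{F}| = (n-t)!$, then the characteristic vector $v_{\mathcal{F}}$ is orthogonal to every eigenspace of $A$ other than the all-ones line and the minimum-eigenvalue eigenspace, hence $v_{\mathcal{F}} \in \bigoplus_{\lambda_1 \ge n-t} V_\lambda$. The crux is then to show that a Boolean function on $S_n$ whose Fourier support is confined to these low-degree isotypic components is $O_t(1/n)$-close in $L^2$ to a $t$-junta, i.e.\ a function of the images of only $t$ of the coordinates, and then to bootstrap from ``close'' to ``equal'': since $v_{\mathcal{F}}$ is Boolean of size exactly $(n-t)!$ and lies near such a junta, a combinatorial clean-up forces $\mathcal{F} = \{\sigma \in S_n : \sigma(i_k) = j_k,\ k = 1,\dots,t\}$ for some distinct $i_1,\dots,i_t$ and distinct $j_1,\dots,j_t$, which is exactly the family of matchings containing a fixed set of $t$ disjoint edges. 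The reverse direction --- that every such $t$-umvirate is itself a $t$-intersecting family of size $(n-t)!$ --- is immediate.

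The main obstacle is the uniqueness half, and inside it the junta approximation theorem: bounding the essential dependence of a Boolean function concentrated on the top isotypic components calls for either a hypercontractive inequality on the symmetric group or an intricate combinatorial and linear-algebraic analysis of the representations $V_\lambda$ with $\lambda_1 \ge n-t$, followed by the delicate step that upgrades ``approximately a $t$-umvirate'' to ``exactly a $t$-umvirate.'' The character estimates behind (ii)--(iii) are more standard but still nontrivial, and together these two ingredients are responsible for the ineffective ``sufficiently large $n$'' in the statement.
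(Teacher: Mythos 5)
The paper does not prove this theorem; it is quoted verbatim with the citation \cite{EllisFP11} as background and motivation for the non-bipartite problem the paper actually addresses. There is therefore no ``paper's own proof'' to compare against.

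That said, your outline is an accurate high-level reconstruction of the strategy in Ellis--Friedgut--Pilpel: the reduction to independent sets in the Cayley graph on $S_n$ generated by permutations with fewer than $t$ fixed points; the construction of a real-weighted ``pseudo-adjacency'' matrix $A = \sum_\alpha c_\alpha A_\alpha$ in the center of $\mathbf{C}[S_n]$, diagonalized on the isotypic components $V_\lambda$ with eigenvalue $|C|\,\chi_\lambda(c)/\chi_\lambda(1)$; the weighted Hoffman bound calibrated so that the minimum eigenvalue is $-d/(M-1)$ with $M = n!/(n-t)!$, forcing $|\mathcal{F}| \le (n-t)!$; the character estimates (Roichman-type bounds) which require $n$ large; the deduction that the characteristic vector of an extremal family lies in $U_t = \bigoplus_{\lambda_1 \ge n-t} V_\lambda$; and the junta/$t$-coset stability theorem that upgrades this to exact equality with a $t$-umvirate. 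One small caution: the eigenvalue design is arranged so that the value $-d/(M-1)$ is attained exactly on the nontrivial $V_\lambda$ with $\lambda_1 \ge n-t$, while components with $\lambda_1 < n-t$ get eigenvalues strictly inside $(-d/(M-1),\,d)$; your phrase ``attained precisely on the isotypic components with $\lambda_1 \ge n-t$'' should exclude the trivial $\lambda=(n)$, which carries the top eigenvalue $d$, not the bottom one. Apart from that wording, the outline is sound, and, as you note, the two ineffective ingredients --- the character asymptotics and the quantitative junta approximation --- are what make the bound valid only for large $n$.
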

\noindent An advantage to our approach is that the cast of characters is similar to~\cite{EllisFP11} which may set the stage for an algebraic proof of Conjecture~\ref{con:fullekr} for sufficiently large $n$.

\subsubsection*{Acknowledgements}
I'd like to thank Tim Penttila for his guidance as well as David Haussler for introducing me to the work of Diaconis and Holmes on random walks over matchings some time ago.

\section{Preliminaries}

All matchings considered in this work are perfect matchings of $K_{2n}$, so henceforth we refer to a perfect matching of $K_{2n}$ simply as a \emph{matching}. Let $\mathcal{M}_{2n}$ denote the set of all matchings. A matching can be interpreted as a fixed-point-free involution of $S_{2n}$ or as a partition of $[2n]$ where each part has size two.   We shall refer to the matching $e := 1~2 | 3~4 | \cdots | 2n$-$1~2n$ as the \emph{identity matching}.   Let $H_n := \{ \sigma \in S_{2n} : \sigma e = e\}$ be the subgroup of $S_{2n}$ that stabilizes the identity matching.  It is well-known that $H_n$ is the wreath product $S_2 \wr S_n$ which is isomorphic to the hyperoctahedral group of order $2^nn!$, the group of symmetries of the $n$-hypercube.  Since matchings are in one-to-one correspondence with cosets of the quotient $S_{2n}/H_n$, it follows that $|\mathcal{M}_{2n}| = (2n-1)!! = 1 \times 3 \times 5 \times \cdots \times 2n-3 \times 2n - 1$.

For any two matchings $m,m' \in M_{2n}$, let $\Gamma(m,m')$ be the multigraph on $[2n]$ whose edge multiset is the multiset union $m \cup m'$.  Clearly $\Gamma(m,m') = \Gamma(m',m)$ and by a theorem of Berge~\cite{Berge57}, this graph is composed of disjoint cycles of even parity.  Let $k$ denote the number of disjoint cycles and let $2\lambda_i$ denote the length of an even cycle. If we order the cycles from longest to shortest and divide each of their lengths by two, we see that each graph corresponds to an integer partition $\lambda = (\lambda_1, \lambda_2, \cdots , \lambda_k) \vdash n$. For any $\lambda \vdash n$, if there are $k$ parts that all have the same size $\lambda_i$, we use $\lambda_i^k$ to denote the multiplicity. Let $d(m,m'): M \times M \mapsto \lambda(n)$ denote this map where $\lambda(n)$ is the set of all integer partitions of $n$. We shall refer to $d(m,m')$ as the \emph{cycle type of $m'$ with respect to m} (or vice versa since $d(m,m') = d(m',m)$).  If one of the arguments is the identity matching, then we say $d(e,m)$ is \emph{the cycle type of m}.    Since $\Gamma(x,y) \cong \Gamma(x',y')$ if and only if $d(x,y) = d(x',y')$, let the graph $\Gamma_{\lambda}$ be a distinct representative from the isomorphism class $\lambda \vdash n$.  Illustrations of the graphs $\Gamma_{(n)}$ and $\Gamma_{(2,1^{n-2})}$ are provided in Figure~\ref{fig:graphs} where $n = 4$.  It will be convenient to let $N := 2n-1$.

\begin{figure}\label{fig:graphs}
\begin{tikzpicture}[thick, scale=0.3]
  \foreach \x in {1,...,8}{
    \pgfmathparse{(\x)*(360 - 360/8)  + 7*360/16}
    \node[draw,circle,inner sep=0.1cm] (\x) at (\pgfmathresult:5.4cm) [ultra thick] {\textbf{\x}};
  }
     \draw[red,line width = 3] (1) -- (2);
     \draw[red,line width = 3] (3) -- (4);
     \draw[red,line width = 3] (5) -- (6);
     \draw[red,line width = 3] (7) -- (8);
     \draw[blue,dotted,line width = 3] (2) -- (3);
     \draw[blue,dotted,line width = 3](4) -- (5);
     \draw[blue,dotted,line width = 3] (6) -- (7);
     \draw[blue,dotted,line width = 3] (8) -- (1);
\end{tikzpicture}
\quad \quad \quad \quad \quad \quad 
\begin{tikzpicture}[thick, scale=0.3]
  \foreach \x in {1,...,8}{
    \pgfmathparse{(\x)*(360 - 360/8)  + 7*360/16}
    \node[draw,circle,inner sep=0.1cm] (\x) at (\pgfmathresult:5.4cm) [ultra thick] {\textbf{\x}};
  }
     \path[red,line width = 3] (1) edge[bend right] (2);
     \draw[red,line width = 3] (3) -- (4);
     \path[red,line width = 3] (5) edge[bend right] (6);
     \draw[red,line width = 3] (7) -- (8);
     \path[blue,dotted,line width = 3] (1) edge[bend left] (2);
     \draw[blue,dotted,line width = 3] (3) -- (8);
     \draw[blue,dotted,line width = 3] (4) -- (7);
     \path[blue,dotted,line width = 3] (5) edge[bend left] (6);
\end{tikzpicture}
\caption{The matching $m = 2~3|4~5|6~7|1~8$ on the left has cycle type $(n) \vdash n$ whereas the matching  $m' = 1~2|3~8|4~7|5~6$ on the right has cycle type $(2,1^{n-2}) \vdash n$ where $n = 4$.}
\end{figure}
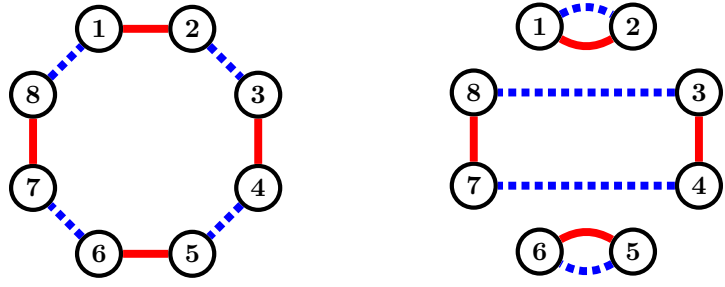

\begin{definition}\label{def:scheme}
A \emph{symmetric association scheme} is a collection of $m$ binary $n \times n$ matrices (associates) that satisfy the following axioms:
\begin{enumerate}
\item $A_i$ is symmetric.
\item $A_0 = I$ where $I$ is the identity matrix.
\item $\sum_{i=0}^m A_i = J$ where $J$ is the all-ones matrix.
\item $A_iA_j = \sum_{k=0}^m p_{ij}^k A_k =A_jA_i$
\end{enumerate}
\end{definition}
\noindent Our association scheme terminology follows~\cite{BannaiI84}. For each $\lambda \vdash n$, define the \emph{$\lambda$-associate} as the following $N!! \times N!!$ binary matrix:
\[
    (A_\lambda)_{ij} = 
\begin{cases}
    1,& \text{if } d(i,j) = \lambda\\
    0,              & \text{otherwise}
\end{cases}
\]
where $i,j \in \mathcal{M}_{2n}$.  Let $\mathcal{A}$ denote the set of all $\lambda$-associates.  It is well-known that $\mathcal{A}$ is a symmetric association scheme, so henceforth  we shall refer to $\mathcal{A}$ as \emph{the matching association scheme}. For each $\lambda \vdash n$, define the \emph{$\lambda$-sphere} (centered at $e$) to be the following set:
\[ \Omega_\lambda = \{m \in \mathcal{M}_{2n} : d(e,x) = \lambda\} \]
where $\lambda \vdash n$.  The spheres partition $\mathcal{M}_{2n}$ and it will be helpful to think of them as conjugacy classes throughout this work.  The following theorem, due to Delsarte and Hoffman, has been central to many seminal results in extremal combinatorics.
\begin{thm}\label{thm:ratio} For any weighted $k$-regular graph $\Gamma$ on $n$ vertices:
\[\alpha(\Gamma) \leq  n\frac{-\eta}{ k - \eta}\]
where $\eta$ is the smallest eigenvalue of $\Gamma$ and $\alpha(\Gamma)$ is the size of a maximum independent set in $\Gamma$.
\end{thm}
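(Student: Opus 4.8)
The plan is to run the standard spectral argument on the characteristic vector of a maximum independent set. Let $A$ be the (symmetric, non-negatively weighted) adjacency matrix of $\Gamma$, normalized so that every row sum equals $k$. Then the all-ones vector $\mathbf{1}$ is an eigenvector of $A$ with eigenvalue $k$, and since $\Gamma$ is $k$-regular this $k$ is in fact the largest eigenvalue of $A$. Fix a maximum independent set $S \subseteq V(\Gamma)$, with characteristic vector $\mathbf{1}_S \in \R^n$. Because $S$ spans no edge of positive weight, $\mathbf{1}_S^{\top} A\, \mathbf{1}_S = 0$; this vanishing is the single combinatorial input, and the rest is linear algebra.

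First I would orthogonally decompose $\mathbf{1}_S = \tfrac{|S|}{n}\mathbf{1} + \mathbf{v}$ with $\mathbf{v} \perp \mathbf{1}$, and note the elementary computation $\|\mathbf{v}\|^2 = \|\mathbf{1}_S\|^2 - \tfrac{|S|^2}{n} = |S| - \tfrac{|S|^2}{n}$. Expanding the quadratic form and using $A\mathbf{1} = k\mathbf{1}$ together with $\mathbf{v}\perp\mathbf{1}$ gives
\[ 0 \;=\; \mathbf{1}_S^{\top} A\, \mathbf{1}_S \;=\; k\frac{|S|^2}{n} + \mathbf{v}^{\top} A\, \mathbf{v}. \]
Since $A$ is symmetric, the Rayleigh quotient bound yields $\mathbf{v}^{\top} A\, \mathbf{v} \ge \eta\|\mathbf{v}\|^2$, where $\eta$ is the least eigenvalue of $A$, so
\[ 0 \;\ge\; k\frac{|S|^2}{n} + \eta\left(|S| - \frac{|S|^2}{n}\right). \]
Dividing through by $|S| > 0$ and rearranging produces $\tfrac{|S|}{n}(k-\eta) \le -\eta$; as $\Gamma$ has at least one edge we have $k > \eta$, hence $k - \eta > 0$, and dividing gives $|S| \le n\,\tfrac{-\eta}{k-\eta}$, which is the claim. (If $\Gamma$ is edgeless then $\eta = k$ and the inequality is vacuous.)

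There is no genuine obstacle here — the result is classical — so the "hard part" is only a matter of attending to hypotheses: one uses $k$-regularity precisely to know $\mathbf{1}$ is the top eigenvector; one uses symmetry of $A$ to get the Rayleigh inequality $\mathbf{v}^{\top} A\mathbf{v} \ge \eta\|\mathbf{v}\|^2$ for the particular vector $\mathbf{v}$; and in the weighted setting one should assume the edge weights are non-negative so that independence of $S$ still forces $\mathbf{1}_S^{\top} A\mathbf{1}_S = 0$. With these in place the computation above is verbatim.
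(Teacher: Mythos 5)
Your argument is correct and complete; it is the standard spectral proof of the Hoffman ratio bound via the orthogonal decomposition $\mathbf{1}_S = \tfrac{|S|}{n}\mathbf{1} + \mathbf{v}$ and a Rayleigh-quotient estimate on the component orthogonal to $\mathbf{1}$. Note, however, that the paper does not prove this theorem at all --- it states it as a classical result of Delsarte and Hoffman and uses it as a black box --- so there is no paper proof to compare against; your write-up simply supplies the (correct, textbook) derivation that the paper omits, and you have appropriately flagged the hypotheses it relies on (symmetry and non-negativity of $A$, $k$-regularity to identify $\mathbf{1}$ as the Perron eigenvector, and $k > \eta$ in order to divide).
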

\noindent We will refer to Theorem~\ref{thm:ratio} as \emph{the ratio bound} and use it to prove a new EKR-type theorem for matchings, but the following theorem, essentially due to Delsarte, will be of central importance.
\begin{thm}\label{thm:coclique}
Let $\mathcal{A}$ be an symmetric association scheme on $n$ vertices and let $\Gamma$ be the union of some of the graphs in the scheme. If $C$ is a clique and $S$ is an independent set in $\Gamma$, then
\[|C||S| \leq n\]
If $|C||S|=n$ and $x$ and $y$ are the respective characteristic vectors
of $C$ and $S$, then
\[x^T E_j x y^T E_j y = 0~\forall j > 0.\]
\end{thm}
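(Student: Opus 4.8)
This is the clique--coclique bound, essentially due to Delsarte, and the plan is to obtain both the inequality and the equality condition from the orthogonality relations of the eigenmatrices of the scheme. Write the associates as $A_0 = I, A_1, \dots, A_m$ with corresponding primitive idempotents $E_0 = \tfrac1n J, E_1, \dots, E_m$, so that $A_i = \sum_j p_i(j) E_j$ and $E_j = \tfrac1n \sum_i q_j(i) A_i$; let $v_i$ be the valency of $A_i$ (so $v_0 = 1$) and $m_j = \rank E_j$ (so $q_j(0) = m_j$). Combining $\langle A_i, A_{i'}\rangle = n v_i \delta_{ii'}$ and $\langle E_j, E_{j'}\rangle = m_j \delta_{jj'}$ for the trace inner product with the identities $q_j(i) = \tfrac{m_j}{v_i} p_i(j)$ and $\sum_j p_i(j) q_j(i') = n \delta_{ii'}$ yields the relation
\[ \sum_{j=0}^{m} \frac{q_j(i)\, q_j(i')}{m_j} = \frac{n}{v_i}\, \delta_{ii'}, \]
which I will take as the main tool; all of this is standard~\cite{BannaiI84}.

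Let $x$ and $y$ be the characteristic vectors of the clique $C$ and the independent set $S$, and let $R \subseteq \{1, \dots, m\}$ index the associates whose union is $\Gamma$. The hypotheses enter only as follows: $C$ being a clique forces $x^{T} A_i x = 0$ for every $i \notin R \cup \{0\}$, with $x^{T} A_0 x = |C|$; and $S$ being independent forces $y^{T} A_i y = 0$ for every $i \in R$, with $y^{T} A_0 y = |S|$. Since $(E_j)_{uv} = \tfrac1n q_j(i)$ whenever the pair $(u,v)$ lies in associate $i$, reading off entries gives $x^{T} E_j x = \tfrac1n \sum_i q_j(i)\, x^{T} A_i x$, a sum supported on $i \in R \cup \{0\}$, and likewise $y^{T} E_j y = \tfrac1n \sum_i q_j(i)\, y^{T} A_i y$, supported on $i \notin R$.

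The heart of the argument is to evaluate $\sum_j \frac{(x^{T} E_j x)(y^{T} E_j y)}{m_j}$ by substituting the two expansions and carrying out the sum over $j$ first. By the orthogonality relation the resulting double sum over the two index sets collapses onto their intersection, which is precisely the single associate $\{0\}$, leaving $\tfrac{1}{n^{2}}(x^{T}A_0x)(y^{T}A_0y)\tfrac{n}{v_0} = \tfrac{|C|\,|S|}{n}$. On the other hand the $j = 0$ summand by itself equals $\frac{(x^{T} E_0 x)(y^{T} E_0 y)}{m_0} = \frac{|C|^{2}|S|^{2}}{n^{2}}$ (using $E_0 = \tfrac1n J$ and $m_0 = 1$), while every summand is nonnegative because $x^{T} E_j x = \|E_j x\|^{2} \ge 0$, $y^{T} E_j y \ge 0$ and $m_j > 0$. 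Comparing the two evaluations forces $\frac{|C|^{2}|S|^{2}}{n^{2}} \le \frac{|C|\,|S|}{n}$, i.e.\ $|C||S| \le n$; and when equality holds every $j > 0$ term must vanish, i.e.\ $x^{T} E_j x\, y^{T} E_j y = 0$ for all $j > 0$, which is the second assertion.

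I expect the only delicate point to be assembling the eigenmatrix machinery with consistent normalisations so that the orthogonality relation comes out exactly as displayed; once that is in place, the clique and independence hypotheses do all the work by pinning the supports of the two eigenvalue expansions so that they overlap only in the identity associate, and everything else is bookkeeping with $E_0 = \tfrac1n J$ and $m_0 = v_0 = 1$.
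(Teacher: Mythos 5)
Your proof is correct, and the paper itself states this theorem without proof (citing it as ``essentially due to Delsarte''); what you have given is the standard Delsarte argument, with the orthogonality relation $\sum_j q_j(i)q_j(i')/m_j = (n/v_i)\delta_{ii'}$ collapsing the double sum onto the identity associate and nonnegativity of $x^T E_j x = \|E_j x\|^2$ doing the rest. One tiny point worth noting explicitly: the final division by $|C||S|$ implicitly assumes $C$ and $S$ are nonempty, which is harmless since the bound is vacuous otherwise.
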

\noindent We shall refer to Theorem~\ref{thm:coclique} as \emph{the clique-coclique bound} and make use of one of its simple but useful corollaries.
\begin{cor}\label{cor:nonzero}
\cite{GodsilM09} Let $\Gamma$ be a union of graphs in an association scheme with the property that the Theorem~\ref{thm:coclique} holds with equality. Assume that $C$ is a maximum clique and $S$ is a maximum independent set in $\Gamma$ with characteristic vectors $x$ and $y$ respectively. If $E_j$ are the idempotents of the association scheme, then for $j > 0$ at most one of the vectors $E_j x$ and $E_j y$ is not zero.
\end{cor}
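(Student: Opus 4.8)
The plan is to obtain the corollary as an immediate consequence of the ``moreover'' clause of Theorem~\ref{thm:coclique}, combined with the elementary observation that each idempotent $E_j$ of a symmetric association scheme is a real symmetric idempotent, hence an orthogonal projection onto one of the common eigenspaces of the Bose--Mesner algebra. Concretely, first I would record that $E_j^T = E_j$ and $E_j^2 = E_j$ imply, for every real vector $v$,
\[ v^T E_j v = v^T E_j^T E_j v = (E_j v)^T (E_j v) = \| E_j v \|_2^2 \ge 0, \]
with equality if and only if $E_j v = 0$. This is the only place the symmetry of the scheme is used.

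Next, since $\Gamma$ is a union of graphs in the scheme, its adjacency matrix lies in the Bose--Mesner algebra, so Theorem~\ref{thm:coclique} applies to $\Gamma$. Because $C$ is a maximum clique and $S$ a maximum independent set, and by hypothesis the clique--coclique bound is tight, we have $|C|\,|S| = n$, so the ``moreover'' part of Theorem~\ref{thm:coclique} yields
\[ (x^T E_j x)\,(y^T E_j y) = 0 \qquad \text{for all } j > 0. \]
Substituting the previous identity, this reads $\| E_j x \|_2^2 \, \| E_j y \|_2^2 = 0$ for every $j > 0$; as both factors are nonnegative, at least one of them vanishes, i.e.\ $E_j x = 0$ or $E_j y = 0$. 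Hence for each $j > 0$ at most one of $E_j x$, $E_j y$ is nonzero, which is exactly the assertion. (For $j = 0$ both are nonzero multiples of the all-ones vector, which is why the claim is restricted to $j > 0$.)

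I do not expect a genuine obstacle here: the substance of the statement is packaged into the equality case of Delsarte's clique--coclique bound, which we are free to quote. The only steps that merit a sentence of care are checking that the adjacency matrix of $\Gamma$ really does belong to the Bose--Mesner algebra (immediate, as $\Gamma$ is a union of scheme graphs), and that ``Theorem~\ref{thm:coclique} holds with equality'' does license applying its ``moreover'' clause to the chosen maximum clique and maximum independent set rather than to some other extremal pair --- but this follows because any clique has size at most $|C|$ and any independent set size at most $|S|$, so if the product $n$ is attained at all, it is attained at the pair $(C,S)$.
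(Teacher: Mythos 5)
Your argument is correct and is exactly the standard derivation: the paper does not supply its own proof of Corollary~\ref{cor:nonzero} (it simply cites \cite{GodsilM09}), and your reduction to the ``moreover'' clause of Theorem~\ref{thm:coclique} via the identity $v^T E_j v = \| E_j v \|_2^2$, valid because each $E_j$ is a real symmetric idempotent, is precisely how the result is obtained there.
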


\section{The Matching Derangement Graph}

\begin{definition}
Let $\Gamma$ be the \emph{matching derangement graph} defined over $\mathcal{M}_{2n}$ such that two matchings are adjacent if and only if they are derangements of one another.
\end{definition}
\noindent 
The matching derangement graph is the analogue of the \emph{derangement graph} $\mathcal{D} = (S_n,D_n)$, that is, the normal Cayley graph defined over $S_n$ generated by the derangements (fixed-point-free permutations) $D_n \subseteq S_n$.
The number of derangements of $S_n$ is given by the following well-known recurrence.
\[ !n = D_n = (n-1)(D_{n-1} + D_{n-2})\]
where $D_0 = 1$ and $D_1 = 0$.  It is easy to see that $\mathcal{D}$ is $!n$-regular which implies that $!n$ is the largest eigenvalue of $\mathcal{D}$~\cite{GodsilRoyle}.  A unpublished result of Godsil shows that the size of a maximum clique and the chromatic number of $\mathcal{D}$ are both $n$, and it was first observed in~\cite{FranklD77} that the size of a largest independent set of $\mathcal{D}$ is $(n-1)!$.  We now prove the analogous results for the matching derangement graph.

The number of derangements of an arbitrary matching can be computed using the following lesser-known recurrence:
\[ !!n := D^M_n = 2 (n - 1) (D^M_{n - 1} +D^M_{n - 2})\]
where $D^M_0 = 1$ and $D^M_1 = 0$.  Clearly, the matching derangement graph is $!!n$-regular, hence $!!n$ is its largest eigenvalue~\cite{GodsilRoyle}.

\begin{thm}\label{clique}
The size of a maximum clique in $\Gamma$ is $2n-1$.
\end{thm}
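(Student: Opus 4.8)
The plan is to establish matching lower and upper bounds of $2n-1$ by elementary means, since the combinatorial content here is exactly that of a $1$-factorization of $K_{2n}$. First I would unwind the definition: writing $\sigma,\tau$ for the fixed-point-free involutions corresponding to matchings $m,m'$, the product $\sigma\tau$ fixes a vertex $v$ iff the $m$-partner of $v$ equals the $m'$-partner of $v$; hence $m$ and $m'$ are derangements of one another exactly when they share no edge of $K_{2n}$ — equivalently, when $d(m,m')$ has no part equal to $1$. So a clique in $\Gamma$ is precisely a family of pairwise edge-disjoint matchings.

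For the upper bound, if $C$ is a clique then the edge sets of its members are pairwise disjoint subsets of $E(K_{2n})$, each of size $n$, so $n|C| \le \binom{2n}{2} = n(2n-1)$ and therefore $|C| \le 2n-1$.

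For the lower bound I would exhibit a clique of size exactly $2n-1$, i.e.\ a $1$-factorization of $K_{2n}$; the classical round-robin construction does the job. Identify $V(K_{2n})$ with $\mathbf{Z}_{2n-1} \cup \{\infty\}$ and, for each $k \in \mathbf{Z}_{2n-1}$, put
\[
 M_k \;=\; \{\{\infty,k\}\}\ \cup\ \{\,\{k+i,\,k-i\} : 1 \le i \le n-1\,\},
\]
all arithmetic modulo $2n-1$. Two short verifications finish the argument: (i) each $M_k$ is a perfect matching — since $2n-1$ is odd we have $k+i \ne k-i$ for $1 \le i \le n-1$, the $n-1$ pairs $\{k+i,k-i\}$ are mutually disjoint and avoid $k$, and together with $\{\infty,k\}$ they cover all $2n$ vertices; (ii) the $M_k$ are pairwise edge-disjoint — an edge $\{\infty,k\}$ occurs only in $M_k$, while an edge $\{a,b\}$ with $a,b \in \mathbf{Z}_{2n-1}$ occurs in $M_k$ iff $a+b \equiv 2k \pmod{2n-1}$, and because $2$ is a unit modulo the odd number $2n-1$ this pins down $k$ uniquely. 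Thus $\{M_k : k \in \mathbf{Z}_{2n-1}\}$ is a clique of size $2n-1$, and combined with the upper bound the maximum clique size is exactly $2n-1 = N$.

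There is no genuine obstacle here; the only step demanding any care is checking that the explicit family above is a $1$-factorization, which is routine and classical. One could alternatively deduce the upper bound from the clique--coclique bound (Theorem~\ref{thm:coclique}) together with the value $(2(n-1)-1)!!$ of the independence number of $\Gamma$, but the edge-counting argument is preferable because it is self-contained and does not presuppose Theorem~\ref{thm:ekr}; indeed, this clique, paired with a maximum trivially intersecting family, is precisely what will later be used to certify that the clique--coclique bound holds with equality.
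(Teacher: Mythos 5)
Your argument is correct and follows the same route as the paper: a clique in $\Gamma$ is a set of pairwise edge-disjoint perfect matchings, so $n|C|\le\binom{2n}{2}$ gives $|C|\le 2n-1$, and a $1$-factorization of $K_{2n}$ achieves it. The only difference is that the paper cites Lucas's theorem for the existence of a $1$-factorization, whereas you supply the classical round-robin construction explicitly, which is a harmless and self-contained substitute.
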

\begin{proof}
No clique of $\Gamma$ can have more than $2n-1$ vertices, and a theorem of Lucas~\cite{Lucas92} shows that the edges of any complete graph $K_{2n}$ can be partitioned into $2n-1$ parts such that each part is a matching.
\end{proof}

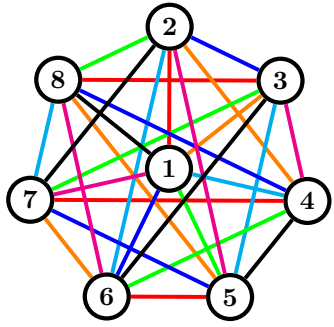
\begin{figure}
\begin{tikzpicture}[thick, scale=0.35]
  \foreach \x in {2,...,8}{
    \pgfmathparse{((\x)-1)*(360 - 360/7)  + 6*360/15.3}
    \node[draw,circle,inner sep=0.1cm] (\x) at (\pgfmathresult:5.4cm) [ultra thick] {\textbf{\x}};
  }
  \node[draw,circle,inner sep=0.1cm] (1) at (0,0) [ultra thick] {\textbf{1}};
  \draw[red,line width = 1.5] (1) -- (2);
  \draw[red,line width = 1.5] (8) -- (3);
  \draw[red,line width = 1.5] (7) -- (4);
  \draw[red,line width = 1.5] (6) -- (5);
  \draw[orange,line width = 1.5] (1) -- (3);
  \draw[orange,line width = 1.5] (2) -- (4);
  \draw[orange,line width = 1.5] (8) -- (5);
  \draw[orange,line width = 1.5] (7) -- (6);
  \draw[cyan,line width = 1.5] (1) -- (4);
  \draw[cyan,line width = 1.5] (3) -- (5);
  \draw[cyan,line width = 1.5] (2) -- (6);
  \draw[cyan,line width = 1.5] (7) -- (8);
  \draw[green,line width = 1.5] (1) -- (5);
  \draw[green,line width = 1.5] (4) -- (6);
  \draw[green,line width = 1.5] (3) -- (7);
  \draw[green,line width = 1.5] (2) -- (8);
  \draw[blue,line width = 1.5] (1) -- (6);
  \draw[blue,line width = 1.5] (5) -- (7);
  \draw[blue,line width = 1.5] (8) -- (4);
  \draw[blue,line width = 1.5] (2) -- (3);
  \draw[magenta,line width = 1.5] (1) -- (7);
  \draw[magenta,line width = 1.5] (6) -- (8);
  \draw[magenta,line width = 1.5] (5) -- (2);
  \draw[magenta,line width = 1.5] (3) -- (4);
  \draw[black,line width = 1.5] (1) -- (8);
  \draw[black,line width = 1.5] (7) -- (2);
  \draw[black,line width = 1.5] (6) -- (3);
  \draw[black,line width = 1.5] (4) -- (5);

\end{tikzpicture}
\caption{A Lucas clique of $\Gamma$ for $n = 4$.}\label{fig:onefact}
\end{figure}
\noindent Lucas in fact showed that there always exists a 1-factorization $C$ of $K_{2n}$ such that $d(m,m') = (n)$ $\forall m,m' \in C$. Such a 1-factorization will be called a \emph{Lucas clique}.  

\begin{thm}
The chromatic number of $\Gamma$ is $2n-1$.
\end{thm}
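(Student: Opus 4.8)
The plan is to trap $\chi(\Gamma)$ between a clique lower bound and an explicit colouring upper bound, both of which come out to $2n-1$ and neither of which needs anything beyond what precedes. For the lower bound one simply uses $\chi(\Gamma)\ge\omega(\Gamma)$ (a clique forces pairwise distinct colours) together with Theorem~\ref{clique}, giving $\chi(\Gamma)\ge 2n-1$. (Alternatively one could invoke Theorem~\ref{thm:coclique}, or the trivial inequality $\chi(\Gamma)\ge|\mathcal{M}_{2n}|/\alpha(\Gamma)$ combined with the coclique bound $\alpha(\Gamma)\le(2(n-1)-1)!!$ of Theorem~\ref{thm:ekr}; since $(2n-1)!!/(2(n-1)-1)!!=2n-1$ this again gives $\chi(\Gamma)\ge 2n-1$. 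The clique argument is preferable as it is self-contained.)

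For the upper bound I would build a proper $(2n-1)$-colouring by hand. Fix the vertex $1\in[2n]$. Every matching $m\in\mathcal{M}_{2n}$ pairs $1$ with a unique partner, so the trivially intersecting families $\mathcal{F}_{1j}$, for $j\in\{2,3,\dots,2n\}$, partition $\mathcal{M}_{2n}$ into exactly $2n-1$ blocks. Colour $m$ by the unique $j$ with $\{1,j\}\in m$. Two matchings receiving the same colour both contain the edge $\{1,j\}$, hence are not derangements of one another and thus not adjacent in $\Gamma$; that is, each colour class $\mathcal{F}_{1j}$ is an independent set. So this is a proper colouring and $\chi(\Gamma)\le 2n-1$. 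Combining with the lower bound gives $\chi(\Gamma)=2n-1$.

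There is no real obstacle here; the one observation that makes it work is that the $2n-1$ maximum intersecting families $\mathcal{F}_{1j}$ sharing the common vertex $1$ tile $\mathcal{M}_{2n}$, precisely because a perfect matching pairs $1$ with exactly one other vertex, so the colour classes are simultaneously independent and exhaustive. This is the matching analogue of the classical fact that the permutation derangement graph $\mathcal{D}$ on $S_n$ is properly $n$-coloured by $\sigma\mapsto\sigma(1)$; together with Theorem~\ref{clique}, this result is the matching counterpart of Godsil's unpublished determination that $\omega(\mathcal{D})=\chi(\mathcal{D})=n$.
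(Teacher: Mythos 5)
Your proof is correct and takes essentially the same route as the paper: lower bound from the clique number via Theorem~\ref{clique}, upper bound from the partition of $\mathcal{M}_{2n}$ into the $2n-1$ independent sets $\mathcal{F}_{1,2},\dots,\mathcal{F}_{1,2n}$. The only difference is that you spell out the colouring and offer optional alternative lower-bound arguments, but the substance matches the paper's proof.
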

\begin{proof}
Clearly the chromatic number is greater than or equal to the clique number $2n-1$. Each member of the partition $(\mathcal{F}_{1,2}, \mathcal{F}_{1,3}, \cdots ,\mathcal{F}_{1,2n})$ is an independent set of $\Gamma$ which gives rise to a $(2n-1)$-coloring of $\Gamma$.
\end{proof}

\begin{prop}\label{scheme}
$\Gamma$ is a union of members of the association scheme $\mathcal{A}$.
\end{prop}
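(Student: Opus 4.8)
The plan is to pin down exactly which associates of $\mathcal{A}$ sum to the adjacency matrix of $\Gamma$. The key observation is that two matchings $m,m'$ fail to be derangements of one another precisely when the multigraph $\Gamma(m,m')$ has a doubled edge: an edge $\{i,j\}\in m\cap m'$ occurs with multiplicity two in the edge multiset $m\cup m'$ and hence forms a cycle of length $2$, and conversely every cycle of length $2$ in $\Gamma(m,m')$ is such a doubled edge. Since a cycle of length $2\lambda_i$ contributes a part $\lambda_i$ to the cycle type $d(m,m')$, this says that $m$ and $m'$ share an edge if and only if $d(m,m')$ has a part equal to $1$; equivalently, $m$ and $m'$ are derangements of one another (that is, $m\cap m'=\emptyset$) if and only if every part of $d(m,m')$ is at least $2$.

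First I would verify the direction of this equivalence that is not immediate. By the theorem of Berge recalled in Section~2, the connected components of $\Gamma(m,m')$ are cycles of even length, and along any such cycle the edges alternate between $m$ and $m'$; hence on a component of length $2\lambda_i\geq 4$ no edge belongs to both $m$ and $m'$. Consequently, if $d(m,m')$ has no part equal to $1$ then $m\cap m'=\emptyset$, which together with the converse noted above gives the stated characterization.

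It then follows directly from the definition of the $\lambda$-associates, together with the fact that $d$ assigns to each ordered pair of distinct matchings a unique partition of $n$, that the adjacency matrix of $\Gamma$ is the sum
\[ \sum_{\lambda} A_\lambda \]
taken over all $\lambda\vdash n$ having no part equal to $1$. (Note that $\lambda=(1^n)$, which corresponds to $A_0=I$, is correctly excluded, consistent with $\Gamma$ being loopless.) Hence $\Gamma$ is a union of members of the matching association scheme $\mathcal{A}$, as claimed. There is no real obstacle in this argument; the only point requiring care is the bookkeeping that translates ``$m$ and $m'$ share an edge'' into ``$d(m,m')$ has a part equal to $1$,'' which is exactly the alternation-along-cycles observation above. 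This identification is also what will later allow us to access the eigenspaces of $\Gamma$ through the primitive idempotents $E_j$ of $\mathcal{A}$.
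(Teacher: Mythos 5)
Your proof is correct and takes the same approach as the paper: the paper simply asserts that $\Gamma=\bigcup_\lambda A_\lambda$ over partitions with no part equal to $1$, while you fill in the (straightforward) justification that a shared edge of $m,m'$ is precisely a doubled edge, i.e.\ a $2$-cycle in $\Gamma(m,m')$, i.e.\ a part equal to $1$ in $d(m,m')$.
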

\begin{proof}
$\Gamma = \bigcup_{\lambda} A_\lambda$ where $\lambda$ ranges over integer partitions that have no 1-cycle.
\end{proof}

\begin{thm}
The size of a maximum independent in $\Gamma$ is $(2(n-1)-1)!!$.
\end{thm}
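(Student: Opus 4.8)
The plan is to use the (by now standard) observation that an independent set in $\Gamma$ is exactly an intersecting family of matchings: two matchings are non-adjacent in $\Gamma$ precisely when they fail to be derangements of one another, i.e. when they share an edge. So the statement amounts to a lower bound and a matching upper bound. For the lower bound I would exhibit the trivially intersecting family: $\mathcal{F}_{12}$ is an independent set of $\Gamma$, and deleting the vertices $1$ and $2$ (and the edge $12$) sets up a bijection between $\mathcal{F}_{12}$ and $\mathcal{M}_{2n-2}$, so $|\mathcal{F}_{12}| = (2(n-1)-1)!!$.

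For the upper bound the key is the clique-coclique bound, Theorem~\ref{thm:coclique}. By Proposition~\ref{scheme}, $\Gamma$ is a union of graphs in the matching association scheme $\mathcal{A}$, which is a symmetric association scheme on $N!! = (2n-1)!!$ vertices; and by Theorem~\ref{clique} there is a clique $C$ of $\Gamma$ of size $2n-1$, namely a Lucas clique arising from a $1$-factorization of $K_{2n}$. Applying Theorem~\ref{thm:coclique} with this clique and any independent set $S$ yields $|C|\,|S|\le (2n-1)!!$, hence
\[
|S| \;\le\; \frac{(2n-1)!!}{2n-1} \;=\; (2n-3)!! \;=\; (2(n-1)-1)!!\,.
\]
Together with the lower bound, this gives the claimed value exactly. (One could alternatively try the ratio bound, Theorem~\ref{thm:ratio}, but that would require knowing the least eigenvalue of $\Gamma$, which at this stage is only conjectural, so the clique-coclique bound is the natural tool here.)

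I do not expect a genuine obstacle: both halves are essentially immediate given the results already in place — the existence of a Lucas clique and the fact that $\Gamma$ lies inside the scheme. The only things to be careful about are the elementary double-factorial identity $(2n-1)!!/(2n-1)=(2(n-1)-1)!!$ and the (trivial) verification that $\mathcal{F}_{12}$ is indeed independent. Note that this argument produces only the \emph{size} of a maximum independent set; the stronger assertion that the extremal families are exactly the $\mathcal{F}_{ij}$ is not needed here and is precisely the content of Theorem~\ref{thm:ekr}, to be obtained later via Corollary~\ref{cor:nonzero} and the module method.
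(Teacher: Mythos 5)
Your proposal matches the paper's proof exactly: both establish the lower bound via a trivially intersecting family $\mathcal{F}_{ij}$, then get the upper bound by combining Proposition~\ref{scheme} (so the clique-coclique bound applies), Theorem~\ref{clique} (a Lucas clique of size $2n-1$), and Theorem~\ref{thm:coclique}, yielding $(2n-1)!!/(2n-1) = (2(n-1)-1)!!$. You have simply written out the details that the paper leaves implicit.
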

\begin{proof}
Any trivially intersecting family $\mathcal{F}_{ij}$ corresponds to a maximal independent set $S \subseteq \Gamma$ of size $(2(n-1)-1)!!$.  Applying Proposition~\ref{scheme} along with Theorems~\ref{clique} and~\ref{thm:coclique} gives the result.
\end{proof}

Let $X = \{X_1,\cdots,X_m\}$ be a vertex partition a graph $G$.  Then $X$ is \emph{equitable} if there exist parameters $q_{ij}$ $(1 \leq i,j \leq m)$ such that every vertex in $X_i$ is connected to precisely $q_{ij}$ vertices in $X_j$.  Let $Q=(q_{ij})$ be the \emph{quotient matrix} of $G$ with respect to $X$.

\begin{lem}\label{equit}
Let $Q$ be the quotient matrix of a graph $G$ with respect to an equitable partition $X$.  Then the eigenvalues of $Q$ correspond to eigenvalues of $G$.
\end{lem}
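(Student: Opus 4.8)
The plan is to exhibit the quotient matrix $Q$ as the matrix of the action of $A := A(G)$ on the space of vectors that are constant on the cells of $X$, and then transport eigenvectors of $Q$ up to eigenvectors of $G$. First I would introduce the $n \times m$ \emph{characteristic matrix} $P$ of the partition $X$, whose $i$-th column is the indicator vector of the cell $X_i$ (so $P_{v,i} = 1$ iff $v \in X_i$, and $0$ otherwise). Because the cells $X_1, \dots, X_m$ are nonempty and pairwise disjoint, the columns of $P$ have disjoint nonempty supports; hence $P$ has rank $m$ and $Pz \neq 0$ for every nonzero $z \in \R^m$.

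The heart of the argument is the identity $AP = PQ$. To verify it, fix $v \in X_i$ and a cell index $j$: the $(v,j)$-entry of $AP$ is the number of neighbours of $v$ lying in $X_j$, which equals $q_{ij}$ by the definition of an equitable partition, while the $(v,j)$-entry of $PQ$ is $\sum_k P_{v,k} q_{kj} = q_{ij}$ since $P_{v,k}$ is nonzero only when $k = i$. With this identity in hand, suppose $Qz = \theta z$ for some nonzero $z$; then $A(Pz) = P(Qz) = \theta (Pz)$, and since $Pz \neq 0$ we conclude that $\theta$ is an eigenvalue of $G$ with eigenvector $Pz$. This shows every eigenvalue of $Q$ is an eigenvalue of $G$, and the eigenvectors produced in this way are exactly the eigenvectors of $A$ that are constant on each block of $X$.

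The only delicate point — and it is a mild one — is ensuring that the lifted vector $Pz$ does not vanish, which is precisely why one records that $P$ has full column rank; everything else is bookkeeping with the defining relation of an equitable partition. As a byproduct, since $A$ is symmetric this also shows the eigenvalues of $Q$ are real, even though $Q$ itself need not be a symmetric matrix.
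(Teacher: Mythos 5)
Your argument is correct and is the standard one from algebraic graph theory: form the characteristic matrix $P$ of the partition, verify the intertwining identity $AP=PQ$ directly from the definition of an equitable partition, and lift an eigenvector $z$ of $Q$ to the eigenvector $Pz$ of $A$, using full column rank of $P$ to guarantee $Pz\neq 0$. The paper states Lemma~\ref{equit} without proof, treating it as a known fact (it appears in the Godsil--Royle reference the paper already cites), so there is nothing in the paper to compare against; your write-up supplies exactly the textbook proof, including the correct observation that the image of $P$ is the $A$-invariant subspace of vectors constant on blocks.
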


\begin{thm}
$!!n$ and $-\frac{!!n}{2(n-1)}$ are eigenvalues of $\Gamma$.
\end{thm}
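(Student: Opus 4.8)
The plan is to realize both numbers as eigenvalues of a $2\times 2$ quotient matrix arising from an equitable partition of $\mathcal{M}_{2n}$ and then appeal to Lemma~\ref{equit}. That $!!n$ is an eigenvalue needs no work: $\Gamma$ is $!!n$-regular, so the all-ones vector is an eigenvector with eigenvalue $!!n$. The real content is producing the eigenvalue $-\frac{!!n}{2(n-1)}$.

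Fix the edge $\{1,2\}$ and partition $\mathcal{M}_{2n}$ into the two cells $\mathcal{F}_{12}$ and $\mathcal{M}_{2n}\setminus\mathcal{F}_{12}$. I claim this is an equitable partition of $\Gamma$. Since any two matchings containing $\{1,2\}$ share an edge, the cell $\mathcal{F}_{12}$ is independent; hence every vertex of $\mathcal{F}_{12}$ has $0$ neighbours inside $\mathcal{F}_{12}$ and, by regularity, exactly $!!n$ neighbours in the other cell. For $m\in\mathcal{M}_{2n}\setminus\mathcal{F}_{12}$, a neighbour of $m$ in $\mathcal{F}_{12}$ is a matching $m'$ containing $\{1,2\}$ and sharing no edge with $m$; equivalently, $m'\setminus\{\{1,2\}\}$ is a perfect matching of the $(2n-2)$-set $[2n]\setminus\{1,2\}$ avoiding every edge of $m$ that lies within $[2n]\setminus\{1,2\}$, and there are exactly $n-2$ such edges (they miss only the two vertices matched to $1$ and to $2$ by $m$). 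The number of perfect matchings of a $(2n-2)$-set avoiding a fixed matching of size $n-2$ depends only on $n$ — equivalently, $\mathrm{Stab}_{S_{2n}}(\{1,2\})$ preserves adjacency in $\Gamma$, fixes $\mathcal{F}_{12}$ setwise, and acts transitively on $\mathcal{M}_{2n}\setminus\mathcal{F}_{12}$ — so this count is a constant $c_n$. The quotient matrix is therefore
\[
Q=\begin{pmatrix}0 & !!n\\ c_n & !!n-c_n\end{pmatrix},
\]
whose characteristic polynomial factors as $(\mu-!!n)(\mu+c_n)$, so by Lemma~\ref{equit} both $!!n$ and $-c_n$ are eigenvalues of $\Gamma$.

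Finally I would pin down $c_n$ by counting the edges of $\Gamma$ between the two cells in two ways: $|\mathcal{F}_{12}|\cdot !!n=|\mathcal{M}_{2n}\setminus\mathcal{F}_{12}|\cdot c_n$. Using $|\mathcal{F}_{12}|=(2(n-1)-1)!!$ and $|\mathcal{M}_{2n}\setminus\mathcal{F}_{12}|=(2n-1)!!-(2(n-1)-1)!!=2(n-1)\,(2(n-1)-1)!!$, this gives $c_n=\frac{!!n}{2(n-1)}$, and hence $-c_n=-\frac{!!n}{2(n-1)}$ as desired. The one step that requires genuine care is the equitability claim, i.e. the constancy of $c_n$; I expect the cleanest justification to be the transitivity of $\mathrm{Stab}_{S_{2n}}(\{1,2\})$ on the non-trivial cell, which also makes transparent why $\mathcal{F}_{12}$ behaves uniformly under a group of automorphisms of $\Gamma$.
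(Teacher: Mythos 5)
Your proposal is correct and follows exactly the paper's approach: both use the equitable partition $(\mathcal{F}_{12},\mathcal{M}_{2n}\setminus\mathcal{F}_{12})$, write down the same $2\times 2$ quotient matrix, and invoke Lemma~\ref{equit}. The paper asserts equitability and the value $\frac{!!n}{2(n-1)}$ without justification, whereas you supply both (transitivity of $\mathrm{Stab}_{S_{2n}}(\{1,2\})$ on the complement cell, and the double-count of edges between cells), so your write-up is just a more explicit version of the same argument.
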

\begin{proof}
$\Gamma$ admits an equitable partition $X = (\mathcal{F}_{ij}, \mathcal{M}_{2n} \setminus \mathcal{F}_{ij})$ with quotient matrix:
\[ \left( \begin{array}{cc}
0 & !!n  \\
\frac{!!n}{2(n-1)} & !!n - \frac{!!n}{2(n-1)} \end{array} \right)\] 
whose eigenvalues are $!!n$ and $-\frac{!!n}{2(n-1)}$, which are eigenvalues of $\Gamma$ by Lemma~\ref{equit}.
\end{proof}

Table~\ref{table:spectrum} lists the eigenvalues of the matching derangement graph for small $n$. Since $S_{2n}$ acts transitively on $\mathcal{M}_{2n}$, $\Gamma$ is vertex-transitive; however, no group acts regularly on $\mathcal{M}_{2n}$, so by Sabadussi's theorem $\Gamma$ is not a Cayley graph.  The absence of a group structure on the vertices will force us to use more general representation-theoretic techniques which we shall now develop.

\begin{table}
\caption{The spectra of $\Gamma_n$ for $n = 3,4,5,6$.  The multiplicity of the eigenvalue corresponding to $\lambda \vdash n$ is given by Theorem ~\ref{thm:eigs}.}\label{table:spectrum}
\footnotesize
\begin{tabular}{c|c|c}
$1^3$ & $21$ & $3$\\
\hline
2&   -2&   8\\
\hline
\end{tabular}\\

\bigskip
\begin{tabular}{c|c|c|c|c}
$1^4$ & $21^2$ & $2^2$ & $31$ & $4$ \\
\hline
-3 &   2&   5 &  -10&  60\\
\hline
\end{tabular}\\

\bigskip
\begin{tabular}{c|c|c|c|c|c|c}
$1^5$ & $21^3$ & $2^21$ & $31^2$ & $32$ & $41$ & $5$\\
\hline
4&   -3&   -6&  12 &  12 &  -68 &   544\\
\hline
\end{tabular}\\

\bigskip

\begin{tabular}{c|c|c|c|c|c|c|c|c|c|c}
$1^6$ & $21^4$ & $2^21^2$ & $2^3$ & $31^3$ & $321$ & $3^2$ & $41^2$ & $42$ & $51$ & $6$ \\
\hline
$-29$ & $2$ & $70$ & $10$ & $-14$ & $-14$ & $-5$ & $76$ & $82$ & $-604$ & $6040$ \\
\hline
\end{tabular}
\end{table}

\section{Finite Gelfand Pairs and their Zonal Spherical Functions}

Let $\mathbb{C}G$ be the space of complex-valued functions over a group $G$.  For any choice of $K \leq G$ there is a corresponding subalgebra $C(G,K) \leq \mathbb{C}G$ of functions that are constant on each double coset $KxK$ in $G$, that is, $C(G,K) = \{ f \in \mathbb{C}G : f(kxk') = f(x)~\forall x \in G,~\forall k,k' \in K \}$.  The theory of \emph{Gelfand pairs} provides necessary and sufficient conditions for $C(G,K)$ being commutative.
\begin{thm}\label{thm:gelfandPair}
\cite{MacDonald95} Let $K \leq G$ be a group.  Then the following are equivalent.
\begin{enumerate}
\item $(G,K)$ is a Gelfand Pair;
\item The induced representation $1_K^G = \bigoplus V_i$ (permutation representation of $G$ acting on $G/K$) is multiplicity-free;
\item The algebra $C(G,K)$ is commutative.
\end{enumerate}
\end{thm}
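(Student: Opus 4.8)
The plan is to reduce all three conditions to a single statement about the algebra structure of the bi-$K$-invariant functions, via Schur's lemma. Throughout, I equip $C(G,K)$ with the convolution product, so that it becomes the Hecke algebra of the pair $(G,K)$; I take condition (1) to be, by definition, that this Hecke algebra is commutative, so that $(1)\Leftrightarrow(3)$ is immediate and the content lies in relating these to (2). (If one prefers a different convention for the definition of a Gelfand pair, the same chain of equivalences below still applies, only the labelling of which implication is tautological changes.)

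First I would construct an algebra isomorphism, or anti-isomorphism --- which is harmless for the question of commutativity since an algebra is commutative exactly when its opposite is --- between $C(G,K)$ and the commutant $\mathrm{End}_{\mathbb{C}G}\big(\mathbb{C}[G/K]\big)$ of the permutation module. In one direction, a bi-$K$-invariant function $f$ acts on $\mathbb{C}[G/K]$ by convolution; left $K$-invariance makes the operator well defined on cosets and right $K$-invariance, together with associativity of convolution, makes it commute with the left $G$-action. In the other direction, a $G$-equivariant endomorphism $T$ of $\mathbb{C}[G/K]$ is determined by the image of the trivial coset $\overline{K}$, which is a $K$-fixed vector, and reading off its coordinates in the coset basis produces a bi-$K$-invariant function. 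Verifying that these maps are mutually inverse and carry convolution to composition is routine but bookkeeping-heavy; this is where I expect the main (if modest) obstacle to sit, essentially in pinning down conventions so the intertwining identity comes out cleanly.

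Next I would decompose $\mathbb{C}[G/K]\cong\bigoplus_i V_i^{\oplus m_i}$ into isotypic components, which is legitimate because $\mathbb{C}G$ is semisimple in characteristic zero. Schur's lemma then gives $\mathrm{End}_{\mathbb{C}G}\big(\mathbb{C}[G/K]\big)\cong\prod_i M_{m_i}(\mathbb{C})$, a product of full matrix algebras, and hence, via the first step, $C(G,K)\cong\prod_i M_{m_i}(\mathbb{C})$ as algebras. As a consistency check, taking dimensions recovers the familiar identity $\#\{K\backslash G/K\}=\langle 1_K^G,1_K^G\rangle=\sum_i m_i^2$.

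Finally, $\prod_i M_{m_i}(\mathbb{C})$ is commutative if and only if $m_i\le 1$ for all $i$, that is, if and only if $1_K^G=\bigoplus_i V_i$ is multiplicity-free. This yields $(2)\Leftrightarrow(3)$, and combined with the definitional $(1)\Leftrightarrow(3)$ it completes the proof.
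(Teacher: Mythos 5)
The paper does not prove this theorem; it is cited directly from MacDonald's book, so there is no in-paper argument to compare against. Your proof is correct and is essentially the standard one from the literature: identify the convolution algebra $C(G,K)$ (anti-)isomorphically with the commutant $\mathrm{End}_{\mathbb{C}G}(\mathbb{C}[G/K])$, decompose the permutation module into isotypic pieces, apply Schur's lemma to obtain a product of matrix algebras $\prod_i M_{m_i}(\mathbb{C})$, and observe that commutativity is equivalent to all $m_i \le 1$. Your handling of the definitional ambiguity in (1) is appropriate, and the dimension sanity check $\#\{K\backslash G/K\}=\langle 1_K^G,1_K^G\rangle=\sum_i m_i^2$ is the right one. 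The only place you should be careful when writing this out in full is, as you note, the bookkeeping in the isomorphism $C(G,K)\cong\mathrm{End}_{\mathbb{C}G}(\mathbb{C}[G/K])$: depending on whether you use left or right convolution and left or right cosets, you get an isomorphism or an anti-isomorphism, but as you correctly observe this is immaterial for commutativity.
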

\noindent Let $(G,K)$ be a Gelfand pair and define $\chi_i$ to be the character of $V_i$.  The functions
\[  \omega_i(x) = \frac{1}{|K|} \sum_{k \in K} \overline{\chi_i} (xk) = \sum_{k \in K} \chi_i (x^{-1}k) \]
form an orthogonal basis for $C(G,K)$ and are called the \emph{zonal spherical functions}.  It will be helpful to think of the zonal spherical functions as the spherical analogues of characters of irreducible representations.

It is well known that $(S_{2n}, H_n)$ is a Gelfand pair, so the induced representation $1^{S_{2n}}_{H_n}$ admits the following unique decomposition into irreducible representations.
\begin{thm}
\cite{Thrall42} Let $\lambda = (\lambda_1, \lambda_2,\cdots,\lambda_k) \vdash n$ and $S^{2\lambda}$ be the Specht module of $S_{2n}$ corresponding to the partition $2\lambda := (2\lambda_1, 2\lambda_2,\cdots,2\lambda_k) \vdash 2n$. Then
\[ 1^{S_{2n}}_{H_n} = \bigoplus_{\lambda \vdash n} S^{2\lambda}.\]
\end{thm}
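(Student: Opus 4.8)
The plan is to pass to symmetric functions and compute the Frobenius characteristic of the permutation module $1^{S_{2n}}_{H_n}$. Recall that the characteristic map $\mathrm{ch}$ is a linear isometric isomorphism from the space of virtual characters of $S_{2n}$ onto the degree-$2n$ part of the ring of symmetric functions, sending the Specht module $S^\mu$ to the Schur function $s_\mu$ and converting induction products into ordinary multiplication. Thus it suffices to establish
\[ \mathrm{ch}\!\left(1^{S_{2n}}_{H_n}\right) \;=\; \sum_{\lambda\vdash n} s_{2\lambda}, \]
since applying $\mathrm{ch}^{-1}$ then recovers the theorem (and in particular re-derives multiplicity-freeness, which we already know because $(S_{2n},H_n)$ is a Gelfand pair).

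First I would identify $\mathrm{ch}(1^{S_{2n}}_{H_n})$ with the plethysm $h_n[h_2]$. This is the standard wreath-product instance of plethysm: if $W$ is an $S_m$-module with $\mathrm{ch}(W)=f$, then the $n$-th symmetric wreath power $W\wr\mathbf 1_{S_n}$, induced from $S_m\wr S_n$ to $S_{mn}$, has characteristic $h_n[f]$; here $m=2$, $W=\mathbf 1_{S_2}$, $f=h_2$, so $S_m \wr S_n = H_n$ and the induced trivial module is exactly the matching module. Concretely one checks that evaluating $h_n[h_2]$ on a power sum $p_\rho$ returns the number of perfect matchings of $[2n]$ stabilized by a permutation of cycle type $\rho$ — which is the value of the matching permutation character — a short cycle-index computation using $h_2=\tfrac12(p_1^2+p_2)$ together with the description of the cycle structure of $S_2\wr S_n$.

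Second I would invoke the classical Littlewood identity
\[ \sum_{k\ge 0} h_k[h_2] \;=\; \prod_{i\le j}\frac{1}{1-x_ix_j} \;=\; \sum_{\lambda} s_{2\lambda}, \]
whose right-hand equality is the decomposition of the symmetric algebra on $\mathrm{Sym}^2$ of the defining $GL$-module into the irreducibles indexed by the even partitions $2\lambda$. Extracting the degree-$2n$ component gives $h_n[h_2]=\sum_{\lambda\vdash n}s_{2\lambda}$, which finishes the argument.

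The main obstacle is the second step: the Littlewood identity $\prod_{i\le j}(1-x_ix_j)^{-1}=\sum_\lambda s_{2\lambda}$ is not self-evident, and a self-contained treatment needs either a Pieri/sign-reversing-involution argument or a dual-Cauchy plus Jacobi--Trudi manipulation — routine but a genuine step, not a one-liner. A purely character-theoretic alternative is to compute $\langle\pi,\chi^\mu\rangle_{S_{2n}}$ directly, where $\pi$ is the matching permutation character: since $\pi(\sigma)$ depends only on the cycle type of $\sigma$ (explicitly, $\sigma$ stabilizes a matching exactly when its cycles can be paired, like length with like length and any self-paired cycle of even length), one expands $\pi$ in power sums and shows, via Murnaghan--Nakayama, that the inner product vanishes unless every part of $\mu$ is even; but organizing this vanishing is essentially the Littlewood identity in disguise. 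In every approach, once the characteristic is pinned down the conclusion is immediate.
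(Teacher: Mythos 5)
Your outline is correct and is in fact the standard modern proof of Thrall's theorem; the paper itself offers no proof, simply citing Thrall (1942), so there is no in-paper argument to compare against. Both of your steps are sound: the identification $\mathrm{ch}\bigl(1^{S_{2n}}_{H_n}\bigr)=h_n[h_2]$ is the well-known characteristic of the wreath-product permutation module (Macdonald I.8, or Stanley EC2, Appendix 2), and the Littlewood identity
\[
\prod_{i\le j}\frac{1}{1-x_ix_j}\;=\;\sum_{\lambda}s_{2\lambda}
\]
combined with $\sum_{n\ge 0}h_n[h_2]=\prod_{i\le j}(1-x_ix_j)^{-1}$ gives $h_n[h_2]=\sum_{\lambda\vdash n}s_{2\lambda}$ by extracting the degree-$2n$ part. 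You are right to flag that the Littlewood identity is the genuine content and deserves a citation or a short self-contained proof (Pieri induction, a sign-reversing involution on pairs of tableaux, or the dual Cauchy/Jacobi--Trudi route all work); your remark that the ``direct'' character computation via Murnaghan--Nakayama is Littlewood in disguise is also accurate. One small stylistic note: multiplicity-freeness falls out of the Schur-positive expansion with $0/1$ coefficients, so you do not need to pre-invoke the Gelfand-pair fact — the symmetric-function computation reproves it independently.
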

\noindent For any choice of $K \leq G$, a general procedure is given in~\cite{BannaiI84} for constructing a (non-commutative) association scheme whose Hecke algebra is isomorphic to $C(G,K)$.  An association scheme $\mathcal{A}$ that arises from this construction will be called a \emph{$K \backslash G / K$-association scheme}.  In such a scheme, there is a natural bijection between the associates of $\mathcal{A}$ and the double cosets and it is well-known that if $\mathcal{A}$ is a $K \backslash G / K$-association scheme, then $\mathcal{A}$ is commutative if and only if $(G,K)$ is a Gelfand pair~\cite{BannaiI84}.
The following theorem is a representation-theoretic characterization of the spectrum of any graph that arises from a $K \backslash G / K$-association scheme where $(G,K)$ is a finite Gelfand pair, which is essentially given in~\cite{BannaiI84}. 
\begin{thm}\label{thm:eigs}
Let $\Gamma = \bigcup_{j}^{\Lambda} A_{j}$ be a union of graphs in a $K \backslash G / K$-association scheme where $(G,K)$ is a Gelfand pair and $\Lambda$ is the index set of some subset of the associates.  The eigenvalue $\eta_i$ of $\Gamma$ corresponding to irreducible $i$ in the multiplicity-free decomposition of $1_K^G$ can be written as:
\[ \eta_i = \sum_{j \in \Lambda} | \Omega_j | \omega_i^{j}\]
where $\omega_i^j$ is the value of the zonal spherical function corresponding to irreducible $i$ on the double coset corresponding to $\Omega_j$. Moreover, $\eta_i$ has multiplicity $\dim \chi_i$.
\end{thm}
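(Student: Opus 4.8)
The plan is to pass through the standard dictionary between $K\backslash G/K$-association schemes and the representation theory of the permutation module $\mathbb{C}[G/K]=1_K^G$, essentially as in~\cite{BannaiI84}. The associates $A_j$ of such a scheme are the orbital matrices for the action of $G$ on $(G/K)\times(G/K)$, so the Bose--Mesner algebra $\langle A_j\rangle$ coincides with the commutant $\mathrm{End}_G(\mathbb{C}[G/K])$. Since $(G,K)$ is a Gelfand pair, $\mathbb{C}[G/K]=\bigoplus_i V_i$ is multiplicity-free, so Schur's lemma gives $\mathrm{End}_G(\mathbb{C}[G/K])=\bigoplus_i \mathbb{C}\,\mathrm{id}_{V_i}$. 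Hence each associate $A_j$ acts on the whole copy of $V_i$ as a single scalar $p_j^{(i)}$, and therefore $\Gamma=\sum_{j\in\Lambda}A_j$ acts on $V_i$ as the scalar $\eta_i=\sum_{j\in\Lambda}p_j^{(i)}$; the corresponding eigenspace of $\Gamma$ contains the copy of $V_i$, which has dimension $\dim\chi_i$ (eigenspaces for distinct $i$ are orthogonal and exhaust $\mathbb{C}[G/K]$, so these multiplicities simply add whenever two irreducibles yield numerically equal eigenvalues). This already yields the multiplicity assertion.

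It remains to identify $p_j^{(i)}$ with $|\Omega_j|\,\omega_i^j$. First I would fix the base point $x_0=K\in G/K$ and consider the indicator vector $\delta_{x_0}\in\mathbb{C}[G/K]$, which is $K$-invariant. Since the $G$-translates of $\delta_{x_0}$ span $\mathbb{C}[G/K]$ and each isotypic projection $E_i$ commutes with $G$, the component $E_i\delta_{x_0}$ is nonzero, hence spans the one-dimensional space $V_i^K$ of $K$-fixed vectors of $V_i$. Normalizing it to take value $1$ at $x_0$ produces the unique $K$-fixed vector $\phi_i\in V_i$ with $\phi_i(x_0)=1$; a one-line character computation, using $E_i=\frac{\dim V_i}{|G|}\sum_{g}\overline{\chi_i(g)}\,g$ together with the multiplicity-free identity $\frac{1}{|K|}\sum_{k\in K}\chi_i(k)=1$, shows that $\phi_i(gK)=\frac{1}{|K|}\sum_{k\in K}\overline{\chi_i}(gk)$, i.e. $\phi_i$ is exactly the zonal spherical function $\omega_i$ viewed as a function on $G/K$. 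In particular $\omega_i$ is constant on each $K$-orbit $\Omega_j$ of $G/K$ (equivalently on each double coset), with value $\omega_i^j$.

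Finally, because $A_j$ acts on all of $V_i$ by the scalar $p_j^{(i)}$, it does so on $\omega_i$; evaluating $A_j\omega_i=p_j^{(i)}\omega_i$ at $x_0$ gives
\[ p_j^{(i)}=(A_j\omega_i)(x_0)=\sum_{y\in\Omega_j}\omega_i(y)=|\Omega_j|\,\omega_i^j, \]
where the second equality is just the definition of the orbital operator $A_j$ (the $j$-th neighbours of $x_0$ are precisely the sphere $\Omega_j$) and the third uses that $\omega_i$ is constant on $\Omega_j$. Summing over $j\in\Lambda$ yields $\eta_i=\sum_{j\in\Lambda}|\Omega_j|\,\omega_i^j$, as claimed.

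The bulk of this is assembling known facts, so I do not anticipate a serious obstacle; the one point to handle with care is the identification in the middle paragraph — matching the projected, normalized $K$-fixed vector $E_i\delta_{x_0}$ with the zonal spherical function in the precise normalization used here (matrix-coefficient versus character-average descriptions) — and stating the multiplicity conclusion correctly in the degenerate case where distinct irreducibles happen to produce the same numerical eigenvalue.
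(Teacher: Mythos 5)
Your proof is correct, and it reconstructs the standard argument in essentially the form one finds in Bannai--Ito; the paper itself does not prove this theorem but only cites \cite{BannaiI84} with the remark that it is ``essentially given'' there. The two places you flag as requiring care are handled correctly: the projected vector $E_i\delta_{x_0}$ does normalize to the zonal spherical function in the paper's convention (a short character computation using Frobenius reciprocity, $\frac{1}{|K|}\sum_{k\in K}\chi_i(k)=\langle\chi_i|_K,1_K\rangle=1$, gives $\omega_i(e)=1$), and your reading of the multiplicity statement as ``at least $\dim\chi_i$, with contributions adding if distinct irreducibles produce coincident eigenvalues'' is the right interpretation.
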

Theorem~\ref{thm:eigs} tells us that determining the eigenvalues of the matching derangement graph amounts to determining the sizes of $\lambda$-spheres as well as the values of the zonal spherical functions over $\lambda$-spheres.  It is well-known that when $G = H \times H$ and $K = H$, the Gelfand pair $(G,K)$ corresponds to the group representation theory of $H$, in which case Theorem~\ref{thm:eigs} gives a representation-theoretic characterization of the spectrum of any normal Cayley graph defined over $H$.  This well-known corollary of Theorem~\ref{thm:eigs} was first observed by Schur and popularized by Diaconis~\cite{Diaconis88}.

The primitive idempotents $E_\lambda$ of the matching association scheme can be computed via its character table as follows.  Let $p_\lambda(\mu)$ denote the eigenvalue of $A_\mu$ corresponding to the irreducible $\lambda$, $k_\mu = p_{(n)}(\lambda)$ be the \emph{degree} of $A_{\mu}$, and let $m_\mu$ denote the \emph{multiplicity} of $A_\mu$.  

\begin{prop}\label{prop:matchingIdempotents}
\cite{BannaiI84} The primitive idempotents of the matching association scheme $\mathcal{A}$ are defined as follows.
\[E_\lambda = \frac{m_\lambda}{N!!} \sum_{\mu \vdash n} \frac{p_\mu(\lambda)}{k_\mu}  A_\mu\]
\[(E_\lambda)_{xy} = \frac{m_{\lambda}}{N!!}  \frac{p_{d(x,y)}(\lambda)}{k_{d(x,y)}}  \]
Moreover, the entry $(E_\lambda)_{xy} = 0$ if and only if $\omega^{d(x,y)}_\lambda = 0$.
\end{prop}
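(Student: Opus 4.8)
The plan is to read off both displays from the standard duality theory of commutative association schemes, and the final ``moreover'' clause from Theorem~\ref{thm:eigs}. Recall that $\mathcal{A}$ is a commutative symmetric association scheme, so its Bose--Mesner algebra (the span of the $A_\mu$, $\mu \vdash n$) carries a second basis of primitive idempotents, and the common eigenspaces of the $A_\mu$ are precisely the isotypic components of the permutation module $\mathbb{C}^{\mathcal{M}_{2n}} \cong 1_{H_n}^{S_{2n}}$. By Thrall's theorem these components are indexed by $\lambda \vdash n$, the one for $\lambda$ affording $S^{2\lambda}$, so the idempotents may be written $E_\lambda$ with $\rank E_\lambda = m_\lambda = \dim S^{2\lambda}$. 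The general theory then expresses each $E_\lambda = \frac{1}{N!!}\sum_{\mu \vdash n} q_\lambda(\mu)\,A_\mu$ for the scalars $q_\lambda(\mu)$ of the second eigenmatrix.

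Next I would pin down $q_\lambda(\mu)$ by the usual trace argument. Writing $p_\mu(\lambda)$ for the eigenvalue of $A_\mu$ on $S^{2\lambda}$ (as in the statement), we have $A_\mu E_\lambda = p_\mu(\lambda) E_\lambda$, and for a symmetric scheme $\operatorname{tr}(A_\mu A_\nu) = N!!\,k_\mu\,\delta_{\mu\nu}$. Computing $\operatorname{tr}(A_\mu E_\lambda)$ from both of these gives $m_\lambda\, p_\mu(\lambda) = k_\mu\, q_\lambda(\mu)$, hence $q_\lambda(\mu) = \frac{m_\lambda}{k_\mu}\,p_\mu(\lambda)$. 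Substituting this into the expression for $E_\lambda$ gives the first display of the proposition, and evaluating the $(x,y)$ entry --- where only the single term $\mu = d(x,y)$ survives because $(A_\mu)_{xy} = 1$ exactly when $d(x,y) = \mu$ --- gives the second display.

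Finally, for the vanishing criterion, apply Theorem~\ref{thm:eigs} to the single associate $A_\mu$ (i.e.\ with $\Lambda = \{\mu\}$): its eigenvalue on the irreducible indexed by $\lambda$ is $p_\mu(\lambda) = |\Omega_\mu|\,\omega_\lambda^\mu$. Since $S_{2n}$ acts transitively on $\mathcal{M}_{2n}$ and $H_n$ acts transitively on each $\lambda$-sphere, a fixed matching has exactly $|\Omega_\mu|$ matchings at cycle type $\mu$ from it, so the valency of $A_\mu$ is $k_\mu = |\Omega_\mu|$; therefore $p_\mu(\lambda)/k_\mu = \omega_\lambda^\mu$ and $(E_\lambda)_{xy} = \frac{m_\lambda}{N!!}\,\omega_\lambda^{d(x,y)}$. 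Since $m_\lambda \geq 1$ and $N!! \geq 1$, this is $0$ if and only if $\omega_\lambda^{d(x,y)} = 0$, as claimed. I do not anticipate a genuine obstacle here: the whole argument is normalization bookkeeping --- matching the eigenspace labels coming from the Gelfand pair with the idempotent labels, verifying $k_\mu = |\Omega_\mu|$, and checking that the zonal spherical value in Theorem~\ref{thm:eigs} is precisely $p_\mu(\lambda)/k_\mu$ rather than $p_\mu(\lambda)$ itself. Alternatively one may simply quote the general formula for the primitive idempotents of a $K\backslash G/K$-association scheme from~\cite{BannaiI84} and then perform only the zonal-function reformulation.
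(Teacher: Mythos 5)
Your proof is correct. The paper states this proposition without proof, simply citing Bannai--Ito, so your self-contained derivation fills in details the paper defers to the reference. Your trace argument $\operatorname{tr}(A_\mu E_\lambda) = m_\lambda p_\mu(\lambda) = k_\mu q_\lambda(\mu)$ is precisely the standard derivation of the second eigenmatrix from the first, and the reduction of the vanishing criterion to $p_\mu(\lambda)/k_\mu = \omega_\lambda^\mu$ via Theorem~\ref{thm:eigs} with $\Lambda = \{\mu\}$ (together with $k_\mu = |\Omega_\mu|$ from Lemma~\ref{lem:sphereSize}) is exactly the right bookkeeping. The only thing worth double-checking in a write-up is that the valency $k_\mu$ is indeed $|\Omega_\mu|$: this follows because $d(x,\cdot)$ restricted to any fixed $x$ induces a bijection between matchings at cycle type $\mu$ from $x$ and points of the $\mu$-sphere, as you observe from transitivity.
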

\noindent Knowing the entries of the idempotents will allow us to determine $E_\lambda x \neq 0$ for certain binary vectors $x$.  The following lemma is essentially given in~\cite{MacDonald95}.  
\begin{lem}\label{lem:sphereSize}
Let $l(\lambda)$ denote the number of parts of $\lambda \vdash n$, $m_i$ denote the number of parts of $\lambda$ that equal $i$, and set $z_\lambda := \prod_{i \geq 1} i^{m_i} m_i!$.  Then the size of a $\lambda$-sphere can be computed as follows.
\[k_\lambda = |\Omega_\lambda| = \frac{|H_n|}{2^{l(\lambda)} z_\lambda}\]
\end{lem}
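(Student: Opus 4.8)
The plan is to compute $|\Omega_\lambda|$ from the orbit--stabilizer theorem for the action of $H_n$ on $\mathcal{M}_{2n}$. Since $S_{2n}$ acts transitively on $\mathcal{M}_{2n}$ and $H_n = \mathrm{Stab}_{S_{2n}}(e)$, the $H_n$-orbits on $\mathcal{M}_{2n}$ are exactly the level sets of $m \mapsto d(e,m)$, namely the spheres $\Omega_\lambda$: any two matchings $m,m'$ with $d(e,m) = d(e,m') = \lambda$ yield isomorphic edge-two-coloured graphs $([2n], e, m)$ and $([2n], e, m')$ --- both are the disjoint union of alternating cycles of lengths $2\lambda_1, \dots, 2\lambda_k$ --- and such an isomorphism is precisely an element of $H_n$ carrying $m$ to $m'$. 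Hence, fixing any $m \in \Omega_\lambda$,
\[ |\Omega_\lambda| \;=\; \frac{|H_n|}{|\mathrm{Stab}_{H_n}(m)|}, \]
and the whole problem reduces to computing $\mathrm{Stab}_{H_n}(m)$.

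I would then identify $\mathrm{Stab}_{H_n}(m)$ as a group of coloured-graph automorphisms: $\sigma \in S_{2n}$ lies in $\mathrm{Stab}_{H_n}(m)$ if and only if $\sigma$ fixes the edge set $e$ (this is the condition $\sigma \in H_n$) and fixes the edge set $m$, that is, $\sigma$ is an automorphism of $\Gamma(e,m)$ preserving the two colour classes $e$ and $m$. As $\Gamma(e,m)$ is a disjoint union of even cycles, one of length $2i$ for each part of $\lambda$ equal to $i$, such a $\sigma$ permutes the cycles, sends each $2i$-cycle to a $2i$-cycle, and acts on each cycle by a colour-preserving automorphism. Sorting the cycles by length,
\[ \mathrm{Stab}_{H_n}(m) \;\cong\; \prod_{i \ge 1} \bigl( G_i \wr S_{m_i} \bigr), \]
where $m_i$ is the number of parts of $\lambda$ equal to $i$, $G_i$ is the group of colour-preserving automorphisms of the alternating two-edge-coloured cycle $C_{2i}$, and the $S_{m_i}$ factor permutes the $m_i$ cycles of common length $2i$; in particular $|\mathrm{Stab}_{H_n}(m)| = \prod_{i \ge 1} |G_i|^{m_i}\, m_i!$.

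The crux is to show $|G_i| = 2i$ for every $i$. For $i \ge 2$ the two colour classes of $C_{2i}$ are distinct perfect matchings of the cycle, so a colour-preserving automorphism is one of the $4i$ elements of the dihedral group $\mathrm{Aut}(C_{2i})$ that fixes each class; a short case check shows these are exactly the $i$ rotations by an even number of edges together with the $i$ reflections whose axis meets the midpoints of two antipodal edges --- the $i$ reflections through antipodal vertices interchange the classes --- so $G_i$ is dihedral of order $2i$. For $i = 1$ the component $C_2$ is a doubled edge whose underlying pair lies in both $e$ and $m$, and the only freedom is whether $\sigma$ swaps its two vertices, giving $|G_1| = 2 = 2 \cdot 1$, consistent with the general formula. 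Substituting $|G_i| = 2i$ gives
\[ |\mathrm{Stab}_{H_n}(m)| \;=\; \prod_{i \ge 1} (2i)^{m_i}\, m_i! \;=\; 2^{\sum_{i \ge 1} m_i} \prod_{i \ge 1} i^{m_i}\, m_i! \;=\; 2^{\,l(\lambda)}\, z_\lambda, \]
and the lemma follows. I expect the only real obstacle to be the reflection bookkeeping inside $\mathrm{Aut}(C_{2i})$ --- separating the vertex-axis reflections (which swap colours) from the edge-axis reflections (which do not) and treating the degenerate $i = 1$ case correctly --- with everything else being routine.
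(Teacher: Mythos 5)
Your proof is correct, and since the paper does not actually prove the lemma (it only cites Macdonald's book, where the computation amounts to the same orbit--stabilizer argument you give), you have supplied the standard argument in full. The key calculation --- that the colour-preserving automorphism group of a properly $2$-edge-coloured $2i$-cycle has order $2i$ (the $i$ even rotations and the $i$ edge-midpoint reflections, with the vertex-axis reflections excluded because adjacent edges have opposite colours) --- is handled correctly, the degenerate $i=1$ case is dealt with, and the wreath-product decomposition $\mathrm{Stab}_{H_n}(m) \cong \prod_{i\ge 1}(G_i \wr S_{m_i})$ is exactly right, yielding $|\mathrm{Stab}_{H_n}(m)| = \prod_{i\ge 1}(2i)^{m_i}m_i! = 2^{l(\lambda)}z_\lambda$ as required. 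One small point worth stating explicitly, which you gesture at but could make crisper: the claim that the $\Omega_\lambda$ are exactly the $H_n$-orbits requires both directions --- that $H_n$ preserves $d(e,\cdot)$ (clear, since $\sigma$ fixes $e$ and sends $\Gamma(e,m)$ to $\Gamma(e,\sigma m)$ isomorphically) and that any two matchings in $\Omega_\lambda$ are $H_n$-conjugate, which you establish by constructing a colour-preserving isomorphism of the two cycle unions. Both halves are present in spirit, so this is a matter of exposition rather than a gap.
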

It is easy to see that Lemma~\ref{lem:sphereSize} is a spherical analogue of the elementary formula for determining the size of a conjugacy class $\lambda \vdash n$ of $S_n$.  Since any associate $A_\lambda \in \mathcal{A}$ is $k_\lambda$-regular, Lemma~\ref{lem:sphereSize} gives an explicit formula for computing the largest eigenvalue, $p_{(n)}(\lambda)$, of each $\lambda$-associate.  We conclude this section with two unpublished results of Diaconis and Lander that will be crucial for our main result~\cite{MacDonald95}.
\begin{lem}\label{lem:zonalId}
$\omega^{(1^n)}_\lambda = 1$
\end{lem}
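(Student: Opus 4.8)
The plan is first to decode the notation. Here the superscript $(1^n)$ names the sphere $\Omega_{(1^n)}$, while the subscript $\lambda$ names the irreducible $S^{2\lambda}$ occurring in $1^{S_{2n}}_{H_n}$; write $\chi_\lambda$ for its character. The first observation is that $\Omega_{(1^n)}=\{e\}$: if $d(e,m)=(1^n)$, then every cycle of the multigraph $\Gamma(e,m)=e\cup m$ has length $2$, i.e.\ is a doubled edge belonging to both $e$ and $m$, which forces $m=e$. Consequently the double coset attached to $\Omega_{(1^n)}$ is $H_n\cdot 1\cdot H_n=H_n$, so $\omega^{(1^n)}_\lambda$ is nothing but the value $\omega_\lambda(1)$ of the zonal spherical function at the identity of $S_{2n}$.

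Next I would evaluate $\omega_\lambda(1)$ directly from the definition. Setting $x=1$ in $\omega_\lambda(x)=\frac{1}{|H_n|}\sum_{k\in H_n}\overline{\chi_\lambda}(xk)$, and using that the irreducible characters of $S_{2n}$ are rational-valued so that $\overline{\chi_\lambda}=\chi_\lambda$, one obtains
\[
\omega_\lambda(1)=\frac{1}{|H_n|}\sum_{k\in H_n}\chi_\lambda(k)=\bigl\langle\operatorname{Res}_{H_n}\chi_\lambda,\ \mathbf 1_{H_n}\bigr\rangle_{H_n},
\]
the multiplicity of the trivial $H_n$-module in the restriction of $S^{2\lambda}$. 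By Frobenius reciprocity this equals $\bigl\langle\chi_\lambda,\ \operatorname{Ind}_{H_n}^{S_{2n}}\mathbf 1_{H_n}\bigr\rangle=\bigl\langle\chi_\lambda,\ 1^{S_{2n}}_{H_n}\bigr\rangle$, and by Thrall's decomposition $1^{S_{2n}}_{H_n}=\bigoplus_{\mu\vdash n}S^{2\mu}$ the summand $S^{2\lambda}$ occurs exactly once, so the inner product is $1$. Hence $\omega^{(1^n)}_\lambda=\omega_\lambda(1)=1$.

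I expect essentially no obstacle: the identity $\omega_\lambda(1)=1$ is exactly the normalization built into the displayed definition of a zonal spherical function, and it reflects $(S_{2n},H_n)$ being a Gelfand pair (Theorem~\ref{thm:gelfandPair}) --- though only the single multiplicity of $S^{2\lambda}$ in $1^{S_{2n}}_{H_n}$, i.e.\ Thrall's theorem, is actually needed. The only points demanding care are the bookkeeping of which index labels the sphere and which the irreducible, and the identification $\Omega_{(1^n)}=\{e\}$. As a cross-check, the same value falls out of Proposition~\ref{prop:matchingIdempotents}: the $(1^n)$-associate is $A_{(1^n)}=I$, so $p_{(1^n)}(\lambda)=1$ and $k_{(1^n)}=|\Omega_{(1^n)}|=1$, whence $\omega^{(1^n)}_\lambda=p_{(1^n)}(\lambda)/k_{(1^n)}=1$.
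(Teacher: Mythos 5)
Your argument is correct, and it fills in something the paper deliberately leaves out: Lemma~\ref{lem:zonalId} is stated without proof, attributed (together with the much less routine Lemma~\ref{lem:zonalCycle}) to unpublished work of Diaconis and Lander via \cite{MacDonald95}. What you give is exactly the standard normalization argument for zonal spherical functions of a Gelfand pair: the identification $\Omega_{(1^n)}=\{e\}$ (since $d(e,m)=(1^n)$ forces every $2$-cycle of $\Gamma(e,m)$ to be a doubled edge, hence $m=e$), the reduction of $\omega^{(1^n)}_\lambda$ to $\omega_\lambda(1)=\langle \operatorname{Res}_{H_n}\chi^{2\lambda},\mathbf 1_{H_n}\rangle$, Frobenius reciprocity, and then Thrall's multiplicity-one decomposition $1^{S_{2n}}_{H_n}=\bigoplus_{\mu\vdash n}S^{2\mu}$. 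Two small points worth making explicit: first, you are right that the key bookkeeping step is that the subscript $\lambda\vdash n$ labels the constituent $S^{2\lambda}$, not $S^\lambda$; second, your ``cross-check'' via Proposition~\ref{prop:matchingIdempotents} quietly invokes the identity $\omega^\mu_\lambda = p_\mu(\lambda)/k_\mu$, which the paper only uses implicitly (it is the content of Theorem~\ref{thm:eigs} specialized to a single associate) --- that is fine, but if you wanted the cross-check to be self-contained you should cite that theorem rather than the proposition. Overall, this is the right proof and arguably more transparent than a bare citation, since it makes clear that $\omega_\lambda(1)=1$ is nothing more than the multiplicity-freeness of $(S_{2n},H_n)$, i.e.\ the Gelfand-pair property itself.
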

\begin{lem}\label{lem:zonalCycle}
Let $\lambda \vdash n$ be a shape and $c$ be a cell of $\lambda$. Let $w(c)$ count the number of cells in $c$'s row that lie west of $c$ and $n(c)$ count the number of cells in $c$'s column that lie north of $c$. Then
\[ \omega_{\lambda}^{(n)} = \frac{1}{|H_{n-1}|} \prod_{c \in \lambda} (2w(c) - n(c))  \]
where the product excludes the cell in the upper-left corner.  Moreover, if $\lambda$ covers $2^3$, then $\omega_\lambda^{(n)} = 0$.
\end{lem}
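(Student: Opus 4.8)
The plan is to convert the statement into a single eigenvalue computation in the matching association scheme and then to evaluate that eigenvalue combinatorially, the cleanest route being a ``spherical'' analogue of the classical fact that, in $\mathbb{C}S_m$, the sum of all $m$-cycles equals the product $J_2 J_3 \cdots J_m$ of Jucys--Murphy elements. First I would reduce: applying Theorem~\ref{thm:eigs} to the single associate $\Gamma = A_{(n)}$ gives $p_{(n)}(\lambda) = |\Omega_{(n)}|\,\omega_\lambda^{(n)} = k_{(n)}\,\omega_\lambda^{(n)}$, where $p_{(n)}(\lambda)$ denotes the eigenvalue of $A_{(n)}$ on the eigenspace $S^{2\lambda}$, and by Lemma~\ref{lem:sphereSize},
\[ k_{(n)} \;=\; \frac{|H_n|}{2\,z_{(n)}} \;=\; \frac{2^n n!}{2n} \;=\; 2^{n-1}(n-1)! \;=\; |H_{n-1}| . \]
Hence the lemma is equivalent to the purely combinatorial identity $p_{(n)}(\lambda) = \prod_{c \in \lambda,\ c \neq (1,1)}(2\,w(c) - n(c))$.

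In $\mathbb{C}S_m$ the sum of all $m$-cycles equals $J_2 J_3 \cdots J_m$ and therefore acts on the Specht module $S^\mu$ as the scalar $\prod_{c \neq (1,1)} \mathrm{content}(c) = \prod_{c \neq (1,1)}(w(c) - n(c))$; the identity to be proved is its ``$\alpha = 2$'' deformation, and I would establish it by the corresponding spherical mechanism, in three steps. (i) Introduce the odd Jucys--Murphy elements $\xi_2, \dots, \xi_n$ attached to the tower $H_1 \leq H_2 \leq \cdots \leq H_n$. (ii) Show that the long-cycle associate factors as $A_{(n)} = \xi_2 \xi_3 \cdots \xi_n$, by induction on $n$: peeling off the pair $\{2n-1, 2n\}$ turns a configuration of two matchings whose union is a single $2n$-cycle into a $(2n-2)$-cycle configuration together with exactly the ``reattachment'' data that defines $\xi_n$. (iii) Invoke Gelfand--Tsetlin theory for the Gelfand pair $(S_{2n}, H_n)$, according to which, in a Gelfand--Tsetlin basis of $S^{2\lambda}$ adapted to the tower, each $\xi_k$ acts diagonally with eigenvalue $2\,w(c) - n(c)$ on the basis vector whose $k$-th box is the cell $c$. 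Multiplying over $k = 2, \dots, n$ (the first box is always $(1,1)$), every Gelfand--Tsetlin vector of $S^{2\lambda}$ is an eigenvector of $A_{(n)}$ with the \emph{same} eigenvalue $\prod_{c \neq (1,1)}(2\,w(c) - n(c))$, and since $A_{(n)}$ acts as a scalar on $S^{2\lambda}$ this scalar must be $p_{(n)}(\lambda)$, proving the identity and hence the formula for $\omega_\lambda^{(n)}$. The last assertion then drops out immediately: if $\lambda$ contains $2^3 = (2,2,2)$ then $\lambda$ contains the cell $c = (3,2)$, for which $w(c) = 1$ and $n(c) = 2$, so the factor $2 \cdot 1 - 2 = 0$ annihilates the product and $\omega_\lambda^{(n)} = 0$.

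The main obstacle is steps (ii)--(iii): pinning down the right definition of the odd Jucys--Murphy elements for $(S_{2n}, H_n)$, proving that $A_{(n)}$ is \emph{exactly} their product rather than their product modulo lower-order terms, and verifying that the Gelfand--Tsetlin eigenvalues come out as $2w(c) - n(c)$ and not some other quantity affine in $w(c)$ and $n(c)$ — essentially all of the work lives here. A more computational but more self-contained fallback bypasses the Jucys--Murphy formalism entirely: under the standard relation between the eigenvalues of the matching association scheme and the power-sum expansion of zonal polynomials~\cite{MacDonald95}, $p_{(n)}(\lambda)$ is the coefficient of $p_{(n)}$ in the zonal polynomial $Z_\lambda = J_\lambda^{(2)}$ (the $\alpha = 2$ Jack polynomial), so it suffices to prove the classical product formula $[p_{(n)}]\,J_\lambda^{(\alpha)} = \prod_{c \neq (1,1)}(\alpha\,w(c) - n(c))$ — which follows by a short induction on $n$ — and then set $\alpha = 2$. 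In either approach the computation of $k_{(n)}$ and the vanishing corollary are routine; combined with Lemma~\ref{lem:zonalId} this records the full table of ``spherical characters at the long cycle.''
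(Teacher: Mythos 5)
The paper does not actually prove this lemma: it is presented, with a pointer to MacDonald's book, as an unpublished result of Diaconis and Lander, so there is no in-paper argument against which to compare your proposal. Your preliminary reduction is correct and clean: $k_{(n)} = |H_n|/(2 z_{(n)}) = 2^{n-1}(n-1)! = |H_{n-1}|$, so by Theorem~\ref{thm:eigs} the lemma is indeed equivalent to the eigenvalue identity $p_{(n)}(\lambda) = \prod_{c\neq(1,1)}(2w(c)-n(c))$, and the vanishing statement does follow immediately from the zero at the cell $(3,2)$ together with the observation that every cell $(2j-1,j)$ with a vanishing factor lies inside $\lambda$ precisely when $(2,2,2)\subseteq\lambda$.

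Of your two routes, the Jack-polynomial one is the right one to make primary. The product formula for the ``top'' Jack character $\theta^\lambda_{(n)}(\alpha) = \prod_{c\neq(1,1)}(\alpha\,a'(c) - l'(c))$ is a known theorem (Hanlon; see also Stanley's paper on Jack symmetric functions and MacDonald Ch.~VII), and the identification of $\omega_\lambda^\rho$ with the normalized $p_\rho$-coefficient of the zonal polynomial $Z_\lambda = J_\lambda^{(2)}$ is exactly the content of MacDonald VII.2; with your computation of $k_{(n)}$ this gives the lemma rigorously. (I'd be slightly more careful than ``a short induction on $n$'' about the top-coefficient formula --- it is not completely trivial --- but it is standard and citable.) The Jucys--Murphy route you prefer has genuine gaps that you honestly flag: while the odd JM elements for $(S_{2n},H_n)$ do exist and act on Gelfand--Tsetlin bases with eigenvalues $2w(c)-n(c)$ (Olshanski, Matsumoto, the Vershik--Okounkov approach to Gelfand pairs), the exact factorization $A_{(n)} = \xi_2\cdots\xi_n$ in the Hecke algebra, with no lower-order terms, is a separate theorem analogous to Jucys's identity $\sum_{n\text{-cycles}}\sigma = J_2\cdots J_n$ and does not follow from the GT-eigenvalue computation alone; your ``peeling'' sketch would need to be turned into an actual double-coset computation. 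As written, then, route (i) is a plan rather than a proof, but route (ii) closes the gap, so the proposal as a whole is sound provided the fallback is promoted.
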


\begin{figure}
\centering

\ytableausetup{mathmode}
\begin{ytableau}
*(yellow) \empty & *(yellow) 2 & *(yellow) 4 & *(yellow) 6 & *(yellow) 8 & 10 & 12 & \ldots  \\
*(yellow) -1 & *(yellow) 1 & 3 & 5 & 7 & 9 & 11 & \ldots \\ 
*(yellow) -2 & \mathbf{0} \\
-3 \\
-4\\
\vdots \\
\end{ytableau}

\caption{An illustration of the cell values in the product of Lemma~\ref{lem:zonalCycle}.  The colored cells compose the shape $(5,2,1) \vdash 8$ yielding $\omega_{(5,2,1)}^{(8)} = \frac{2(4)(6)(8)(-1)(1)(-2)}{2^77!} = \frac{1}{840}$.}  \label{fig:zonalFig}
\end{figure}

\section{The $2(n) \bigoplus 2(n-1,1)$ Module}
We now show that the characteristic vector $v_S$ of any maximum independent set $S$ of $\Gamma$ lives in the sum of the trivial and standard modules.  Our proof is similar to Godsil and Meagher's in the bipartite setting~\cite{GodsilM09}.
\begin{thm}\label{thm:standardModule}
If $v_S$ is the characteristic vector of a maximum independent set $S$ of $\Gamma$, then $v_S \in S^{2(n)} \bigoplus S^{2(n-1,1)}$. 
\end{thm}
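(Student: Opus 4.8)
The plan is to mimic the Godsil–Meagher argument in the bipartite case, using the module method together with the clique–coclique bound. First I would recall that a Lucas clique $C$ (Theorem~\ref{clique}) is a maximum clique of $\Gamma$ of size $N = 2n-1$, and that a trivially intersecting family $\mathcal{F}_{ij}$ is a maximum independent set of size $(2(n-1)-1)!! = N!!/N$; hence the clique–coclique bound (Theorem~\ref{thm:coclique}) holds with equality for $\Gamma$. Since $\Gamma$ is a union of associates of the matching association scheme $\mathcal{A}$ (Proposition~\ref{scheme}), Corollary~\ref{cor:nonzero} applies: if $x$ is the characteristic vector of the Lucas clique $C$ and $v_S$ is the characteristic vector of a maximum independent set $S$, then for each idempotent $E_\lambda$ with $\lambda \neq (1^n)$, at most one of $E_\lambda x$ and $E_\lambda v_S$ is nonzero. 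So it suffices to show that $E_\lambda x \neq 0$ for every $\lambda \vdash n$ with $\lambda \neq (n)$ and $\lambda \neq (n-1,1)$; this forces $E_\lambda v_S = 0$ for all such $\lambda$, which together with $v_S \in \bigoplus_\lambda S^{2\lambda}$ (the scheme decomposes $\mathbb{C}\mathcal{M}_{2n}$ into the modules $S^{2\lambda}$) gives $v_S \in S^{2(n)} \oplus S^{2(n-1,1)}$.

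The heart of the proof is therefore the computation $E_\lambda x \neq 0$. Using Proposition~\ref{prop:matchingIdempotents}, for the Lucas clique $C$ — in which every pair of distinct matchings has cycle type $(n)$ — we have
\[ (E_\lambda x)_m = \frac{m_\lambda}{N!!}\left( \frac{p_{(1^n)}(\lambda)}{k_{(1^n)}} \cdot [m \in C] + \frac{p_{(n)}(\lambda)}{k_{(n)}} \cdot (N-1)\cdot[m\in C] \right) \]
for $m \in C$, since the only cycle types occurring between elements of $C$ (including $d(m,m)=(1^n)$) are $(1^n)$ and $(n)$. Because $p_{(1^n)}(\lambda) = 1$ and $k_{(1^n)} = 1$, this simplifies to a scalar multiple of $1 + (N-1)\,\omega_\lambda^{(n)}\cdot(\text{positive constant})$ on the coordinates indexed by $C$; unwinding $k_{(n)} = |\Omega_{(n)}|$ via Lemma~\ref{lem:sphereSize} and using $p_{(n)}(\lambda)/k_{(n)}$ in terms of $\omega_\lambda^{(n)}$, one finds $E_\lambda x = 0$ on $C$ precisely when $1 + (2n-2)\,\omega_\lambda^{(n)} = 0$, i.e. $\omega_\lambda^{(n)} = -\tfrac{1}{2(n-1)}$. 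So the claim reduces to: for $\lambda \neq (n),(n-1,1)$, the zonal spherical value $\omega_\lambda^{(n)} \neq -\tfrac{1}{2(n-1)}$.

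To prove this I would invoke Lemma~\ref{lem:zonalCycle}, which gives the explicit product formula $\omega_\lambda^{(n)} = \frac{1}{|H_{n-1}|}\prod_{c}(2w(c)-n(c))$ over cells $c$ of $\lambda$ (excluding the corner), together with the vanishing statement: if $\lambda$ covers $2^3$ (i.e. $\lambda$ contains the partition $(2,2,2)$), then $\omega_\lambda^{(n)} = 0 \neq -\tfrac{1}{2(n-1)}$. This disposes of all but finitely many shapes — those not containing $(2,2,2)$, namely the hooks $(n-k,1^k)$, the near-hooks of the form $(a,2,1^j)$ with small second row, $(a,b)$ with two rows, and $(a,b,1)$, $(a,b,c)$-type shapes with at most two cells in rows past the second. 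For each such remaining family I would compute $\omega_\lambda^{(n)}$ directly from the product formula and check the inequality $\omega_\lambda^{(n)} \neq -\tfrac{1}{2(n-1)}$; the two shapes $(n)$ and $(n-1,1)$ are exactly the ones where equality holds (with $(n)$ giving $\omega^{(n)}_{(n)}=1$ and $(n-1,1)$ giving $-\tfrac{1}{2(n-1)}$ after a short computation), which is why they must be excluded, and all other shapes should give values of strictly larger absolute value or the wrong sign. The main obstacle is this case analysis: bounding $|\omega_\lambda^{(n)}|$ away from $\tfrac{1}{2(n-1)}$ uniformly in $n$ for the non-$(2,2,2)$-covering shapes requires care, since the product formula has mixed signs and one must argue that partial cancellations cannot conspire to land exactly on $-\tfrac{1}{2(n-1)}$; I expect the cleanest route is to isolate the first-column and first-row contributions (which produce the dominant growth) and show the magnitude is either $\geq$ a constant or grows with $n$, except in the two exceptional cases.
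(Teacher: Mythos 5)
Your overall plan is the same as the paper's: take $x$ to be the characteristic vector of a Lucas clique, invoke Corollary~\ref{cor:nonzero} (via clique--coclique equality), reduce the nonvanishing of $E_\lambda x$ to the scalar condition $1 + 2(n-1)\,\omega_\lambda^{(n)} \neq 0$, and then appeal to Lemma~\ref{lem:zonalCycle} to control $\omega_\lambda^{(n)}$ for the shapes not covering $(2,2,2)$ (hooks $(n-k,1^k)$ and the two-corner shapes $(n-j,\,j-k,\,1^k)$). That is exactly the paper's argument, so in that sense the route is not different.

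However, two details in your write-up point in the wrong direction, and the case analysis you flag as ``the main obstacle'' is precisely what the paper supplies. First, you say that $(n)$ and $(n-1,1)$ are ``exactly the ones where equality holds''; in fact equality $\omega_\lambda^{(n)} = -\tfrac{1}{2(n-1)}$ holds \emph{only} for $\lambda = (n-1,1)$. The shape $(n)$ gives $\omega_{(n)}^{(n)}=1$, so $1 + 2(n-1)\cdot 1 = 2n-1 \neq 0$; the reason $(n)$ is set aside is that $E_{(n)}$ is the trivial idempotent ($j=0$ in Corollary~\ref{cor:nonzero}), to which the corollary simply does not apply. Second, you predict that the remaining shapes give values of ``strictly larger absolute value or the wrong sign.'' This has the inequality backwards: the paper proves the dominance chain $|\omega_{(n)}^{(n)}| > |\omega_{(n-1,1)}^{(n)}| > |\omega_\lambda^{(n)}|$ for all other relevant $\lambda$, i.e.\ the magnitudes \emph{shrink} as you move down to deeper hooks and two-corner shapes (e.g.\ $\omega_{(n-2,1^2)}^{(n)} = \tfrac{1}{2(n-1)(n-2)}$). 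The paper's monotonicity argument --- comparing $(n-k,1^k)$ to $(n-k+1,1^{k-1})$ and to the transpose, and the analogous moves for $(n-j,j-k,1^k)$ --- is exactly the uniform bound you say you would need; without it your proposal is a correct strategy with the crux still open, and the two misstatements would send the attempted bound in the wrong direction.
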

\begin{proof}
For any maximum clique $C$ of $\Gamma$ define
\[ \omega_{\lambda} (C) := \sum_{c \in C} \omega_{\lambda}(c). \]
Since $\Gamma$ meets Theorem~\ref{thm:coclique} with equality, we have Corollary~\ref{cor:nonzero} at our disposal.
If there exists a maximum clique $C$ such that $\omega_\lambda(C)  \neq 0$ $\forall \lambda \neq (n-1,1) \text{ or } (n)$, then by Proposition~\ref{prop:matchingIdempotents} it follows that $E_{\lambda}v_C \neq 0$ $\forall \lambda \neq (n-1,1) \text{ or } (n)$.  By Corollary~\ref{cor:nonzero}, this would imply that $E_\lambda v_S = 0$ $\forall \lambda \neq (n-1,1) \text{ or } (n)$, giving the result.  The following shows that such a maximum clique exists.

Let $C$ be a Lucas clique that includes the identity matching $e \in C$.  Zonal spherical functions are constant on double cosets (spheres), so we write $\omega_{\lambda}(C)$ as follows:

\begin{align*} 
\omega_{\lambda}(C) &= \sum_{c \in C} \omega_{\lambda} (c)\\
&=  \omega_{\lambda}^{1^n} + 2(n-1) \omega_{\lambda}^{(n)}
\end{align*}
By Lemma~\ref{lem:zonalId}, $\omega_\lambda^{1^n} = 1$ $\forall \lambda \vdash n$, so it suffices to show such that $\omega_\lambda ^{(n)} \neq - \frac{1}{2(n-1)}$ for all $\lambda \neq (n)$ or $(n-1,1)$.  To this end, we prove that $\omega_{(n-1,1)}(C) = 0$, then show 
\[| \omega_{(n)}^{(n)} | > | \omega_{(n-1,1)}^{(n)} | > | \omega_\lambda^{(n)} | \text{~for all~} \lambda \neq (n) \text{ or } (n-1,1).\]
By Lemma~\ref{lem:zonalCycle} it follows that $\omega_{(n-1,1)}(C) = 0$ since \[\omega_{(n-1,1)}^{(n)} = \frac{-|H_{n-2}|}{|H_{n-1}|} = -\frac{1}{2(n-1)}.\]  
It suffices to show that $|H_{n-2}|$ is the largest value that the numerator of Lemma~\ref{lem:zonalCycle} can assume and this occurs only when $\lambda = (n-1,1)$. Lemma~\ref{lem:zonalCycle} states that the only $\lambda \vdash n$ that do not evaluate to zero must be of the form $(n-k,1^k)$ where $0 \leq k< n$ or $(n-j,j-k,1^k)$ where $0 \leq k < j < n$.  

For any shape $\lambda = (n-k,1^k)$ where $k > \frac{n}{2}$, we have $|\omega_\lambda^{(n)}| < |\omega_{\lambda'}^{(n)}|$ where $\lambda' \vdash n$ is the transpose of $\lambda$. It is also the case that $|\omega_{(n-k, 1^k)}^{(n)}| < |\omega_{(n-k+1, 1^{k-1})}^{(n)}|$ where $1 \leq k \leq \frac{n}{2}$. It follows that $| \omega_{(n-1,1)}^{(n)} | > | \omega_\lambda^{(n)} |$ holds for all $\lambda = (n-k,1^k)$, $k > 1$.

Let $\lambda = (n-j, j-k,1^k)$ where $2 < k < j < n$ and let $\mu = \lambda \setminus \lambda_1 $ be shape obtained by removing the first row. For $\mu = (j-k,1^k)$ where $k > \frac{j}{2}$, using similar reasoning, we have $|\omega_{(\lambda_1,\mu)}^{(j)}| < |\omega_{(\lambda_1,\mu')}^{(j)}|$. It is also true that $|\omega_{(\lambda_1,j-k, 1^k)}^{(n)}| < |\omega_{(\lambda_1,j-k+1, 1^{k-1})}^{(n)}|$ where $1 \leq k < \frac{j}{2}$.  
For the case where $1 \leq k \leq 2$, it is easy to see that removing the bottom left cell of $\lambda$ and placing it in the upper right hand corner always gives a new shape with a larger character sum, hence $| \omega_{(n-1,1)}^{(n)} | > | \omega_\lambda^{(n)} |$ for all valid shapes of the form $(n-j,j-k,1^k)$, which completes the proof.
\end{proof}

\begin{cor}
The minimum eigenvalue of $A_{(n)} \in \mathcal{A}$ is $p_{(n-1,1)}((n)) = -|H_{n-2}|$.
\end{cor}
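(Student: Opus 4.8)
\emph{Proof proposal.} The plan is to read the eigenvalues of the single associate $A_{(n)}$ directly off Theorem~\ref{thm:eigs} and then feed in the zonal spherical function estimates already established inside the proof of Theorem~\ref{thm:standardModule}. Taking $\Gamma = A_{(n)}$ in Theorem~\ref{thm:eigs}, so that $\Lambda = \{(n)\}$, the eigenvalue of $A_{(n)}$ indexed by $\lambda \vdash n$ is
\[ p_\lambda((n)) = |\Omega_{(n)}|\,\omega_\lambda^{(n)} . \]
The first step is to pin down the scalar $|\Omega_{(n)}|$: applying Lemma~\ref{lem:sphereSize} with $l((n)) = 1$ and $z_{(n)} = n$ gives $|\Omega_{(n)}| = |H_n|/(2n) = 2^{n-1}(n-1)! = |H_{n-1}|$, so in fact $p_\lambda((n)) = |H_{n-1}|\,\omega_\lambda^{(n)}$.

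Next I would record the two sign computations. The shape $2(n) = (2n)$ indexes the trivial representation of $S_{2n}$, whose zonal spherical function is identically $1$; equivalently, $\omega_{(n)}^{(n)} = 1$ follows from Lemma~\ref{lem:zonalCycle}, since the excluded-corner product over the single row $(n)$ is $2 \cdot 4 \cdots 2(n-1) = |H_{n-1}|$. Hence $p_{(n)}((n)) = |H_{n-1}| > 0$. On the other hand, Lemma~\ref{lem:zonalCycle} (as used in the proof of Theorem~\ref{thm:standardModule}) gives $\omega_{(n-1,1)}^{(n)} = -|H_{n-2}|/|H_{n-1}|$, so $p_{(n-1,1)}((n)) = -|H_{n-2}| < 0$, which is exactly the claimed value.

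Finally I would close the argument with the strict inequality chain proved inside Theorem~\ref{thm:standardModule}, namely $|\omega_{(n)}^{(n)}| > |\omega_{(n-1,1)}^{(n)}| > |\omega_\lambda^{(n)}|$ for every $\lambda \vdash n$ with $\lambda \neq (n), (n-1,1)$. Multiplying through by $|H_{n-1}| > 0$ yields $|p_\lambda((n))| < |H_{n-1}|\,|\omega_{(n-1,1)}^{(n)}| = |H_{n-2}|$ for every such $\lambda$, hence $p_\lambda((n)) > -|H_{n-2}|$; combined with $p_{(n)}((n)) > 0 > -|H_{n-2}|$, this shows that every eigenvalue of $A_{(n)}$ other than $p_{(n-1,1)}((n))$ strictly exceeds $-|H_{n-2}|$, so $p_{(n-1,1)}((n)) = -|H_{n-2}|$ is indeed the minimum eigenvalue. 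I do not anticipate any real obstacle: the entire content is already contained in the proof of Theorem~\ref{thm:standardModule}, and the only things to watch are the evaluation of the normalizing constant $|\Omega_{(n)}| = |H_{n-1}|$ and the signs of $\omega_{(n)}^{(n)}$ and $\omega_{(n-1,1)}^{(n)}$.
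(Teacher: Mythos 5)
Your proposal is correct and follows the same route as the paper: read the eigenvalues of $A_{(n)}$ off the formula $p_\lambda((n)) = |\Omega_{(n)}|\,\omega_\lambda^{(n)}$, compute $|\Omega_{(n)}| = |H_{n-1}|$ via Lemma~\ref{lem:sphereSize}, and use the zonal spherical function values. The only difference is that you make explicit the appeal to the inequality chain $|\omega_{(n)}^{(n)}| > |\omega_{(n-1,1)}^{(n)}| > |\omega_\lambda^{(n)}|$ from the proof of Theorem~\ref{thm:standardModule}, whereas the paper leaves this step implicit when it asserts that $p_{(n-1,1)}((n))$ is the unique least eigenvalue.
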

\begin{proof}
Since $p_\lambda((n)) = |\Omega_{(n)}| \omega_{\lambda}^{(n)}$ and $p_{(n)}((n)) = |\Omega_{(n)}|\omega_{(n)}^{(n)} = |\Omega_{(n)}|$ is always positive, it follows that $p_{(n-1,1)}((n))$ is the unique least eigenvalue of $A_{(n)}$.  Moreover, 
\begin{align*}
p_{(n-1,1)}((n)) &= |\Omega_{(n)}| \omega_{(n-1,1)}^{(n)}\\
&= |H_{n-1}| -\frac{|H_{n-2}|}{|H_{n-1}|} \quad\quad \text{(Lemma~\ref{lem:sphereSize})}\\
&= - |H_{n-2}|
\end{align*}
\end{proof}
\noindent The corollary above along with Theorem~\ref{thm:ratio} implies the following EKR-type result.
\begin{thm}\label{thm:cycleEKR}
Let $\mathcal{F}$ be a family of matchings such that for any two members $x,y \in \mathcal{F}$, $x \cup y$ is disconnected.  Then $|\mathcal{F}| \leq (2(n-1)-1)!!$.  This bound is tight.
\end{thm}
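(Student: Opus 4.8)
\emph{Proof proposal.} The plan is to observe that the hypothesis on $\mathcal{F}$ is exactly a coclique condition for a \emph{single} graph of the matching association scheme, and then to invoke the ratio bound with the least eigenvalue computed in the corollary immediately preceding this statement.

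First I would reformulate the hypothesis. By Berge's theorem, for matchings $x,y$ the multigraph $x\cup y$ is a disjoint union of even cycles, and it is connected precisely when it is a single $2n$-cycle, i.e. precisely when $d(x,y)=(n)$. Hence the condition ``$x\cup y$ is disconnected for all $x,y\in\mathcal{F}$'' says exactly that $\mathcal{F}$ is an independent set in the graph $A_{(n)}\in\mathcal{A}$. So it suffices to bound $\alpha(A_{(n)})$.

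Next I would apply Theorem~\ref{thm:ratio} to $A_{(n)}$. By Lemma~\ref{lem:sphereSize} this graph is $k_{(n)}=|\Omega_{(n)}|=|H_{n-1}|$-regular, and by the corollary preceding this theorem its least eigenvalue is $\eta=p_{(n-1,1)}((n))=-|H_{n-2}|$. Therefore
\[
\alpha(A_{(n)})\ \le\ N!!\cdot\frac{-\eta}{k_{(n)}-\eta}\ =\ (2n-1)!!\cdot\frac{|H_{n-2}|}{|H_{n-1}|+|H_{n-2}|}.
\]
Using $|H_m|=2^m m!$, the fraction simplifies to $\dfrac{2^{n-2}(n-2)!}{2^{n-1}(n-1)!+2^{n-2}(n-2)!}=\dfrac{1}{2(n-1)+1}=\dfrac{1}{2n-1}$, and since $(2n-1)!!/(2n-1)=(2(n-1)-1)!!$ we obtain $|\mathcal{F}|\le (2(n-1)-1)!!$.

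For tightness, take $\mathcal{F}=\mathcal{F}_{ij}$ for any fixed edge $ij$: for $x,y\in\mathcal{F}_{ij}$ the edge $ij$ occurs with multiplicity two in $x\cup y$, forming a $2$-cycle, so $x\cup y$ is disconnected (for $n\ge 2$); hence $\mathcal{F}_{ij}$ is admissible and $|\mathcal{F}_{ij}|=(2(n-1)-1)!!$ meets the bound. I do not expect a serious obstacle here: all the heavy lifting — identifying $-|H_{n-2}|$ as the least eigenvalue of $A_{(n)}$ — is already done, and the only things to handle carefully are the ratio-bound arithmetic and stating the ``connected iff Hamiltonian $2n$-cycle'' reduction cleanly.
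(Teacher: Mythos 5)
Your proof is correct and follows essentially the same route as the paper: identify the hypothesis as independence in the $(n)$-associate $A_{(n)}$, apply the ratio bound with the least eigenvalue $-|H_{n-2}|$ from the preceding corollary, and exhibit $\mathcal{F}_{ij}$ for tightness. You merely spell out the reduction to $A_{(n)}$ and the arithmetic a bit more explicitly than the paper does.
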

\begin{proof}
By the ratio bound, we have 
\begin{align*}
|\mathcal{F}| \leq (2n-1)!! \frac{|H_{n-2}|}{|H_{n-1}| + |H_{n-2}|} &= \frac{(2n-1)!}{|H_{n-2}|(2n-1)}\\
 &= (2(n-1)-1)!!
\end{align*}
Any family of the form $\mathcal{F}_{ij}$ is maximum independent set of $\Gamma$ of size $(2(n-1)-1)!!$. Since $A_{(n)}$ is a subgraph of $\Gamma$, $\mathcal{F}_{ij}$ must also be a maximum independent set of $A_{(n)}$, hence the bound is tight.
\end{proof}
\noindent This is somewhat surprising since it tells us that a maximum independent set of the matching derangement graph does not increase in size even if we remove all edges except those that belong to the $(n)$-associate.  A similar result has been observed in the conjugacy class association scheme of $S_n$~\cite{AhmadiPhD}.  Later, we will observe the stronger result that a non-Hamiltonian family $\mathcal{F}$ is largest if and only if $\mathcal{F}$ is a trivially intersecting family. 

\section{The Perfect Matching Polytope of $K_{2n}$}

We now complete the final part of the proof of the Erd\H{o}s-Ko-Rado theorem for intersecting families of perfect matchings.  The proof of the theorem below follows a polyhedral method of Godsil and Meagher that can be found in~\cite{RooneyPhD}.

\begin{thm}\label{thm:polytope}
If $\mathcal{F}$ is a maximum intersecting family of matchings, then $\mathcal{F}$ is a trivially intersecting family of matchings.
\end{thm}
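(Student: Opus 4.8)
The plan is to run the polyhedral method of Godsil and Meagher, feeding Theorem~\ref{thm:standardModule} into the facial structure of the perfect matching polytope $P := \mathrm{conv}\{\chi_m : m \in \mathcal{M}_{2n}\} \subseteq \R^{E(K_{2n})}$, where $\chi_m$ is the edge-indicator vector of $m$. The first step is to recognise $S^{2(n)} \oplus S^{2(n-1,1)}$ concretely as the span of the characteristic vectors $v_{\mathcal{F}_{ij}}$ of the trivially intersecting families. Indeed, the $S_{2n}$-equivariant map $\mathbf{1}_{ij} \mapsto v_{\mathcal{F}_{ij}}$ sends $\R^{E(K_{2n})} \cong S^{(2n)} \oplus S^{(2n-1,1)} \oplus S^{(2n-2,2)}$ into $\R^{\mathcal{M}_{2n}} \cong \bigoplus_{\lambda \vdash n} S^{2\lambda}$; it must kill $S^{(2n-1,1)}$, which does not occur in the target, and it cannot kill $S^{(2n-2,2)} = S^{2(n-1,1)}$, since no $v_{\mathcal{F}_{ij}}$ is a multiple of the all-ones vector, so its image is exactly $S^{2(n)} \oplus S^{2(n-1,1)}$. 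Theorem~\ref{thm:standardModule} therefore gives $v_{\mathcal{F}} = \sum_{ij} c_{ij} v_{\mathcal{F}_{ij}}$ for some $c \in \R^{E(K_{2n})}$, which means exactly that the linear functional $w(x) := \langle c, x\rangle$ on edge space is $\{0,1\}$-valued on the vertices $\chi_m$ of $P$, taking value $1$ precisely when $m \in \mathcal{F}$.

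Consequently $w$ attains maximum $1$ and minimum $0$ over $P$, so $F := \{x \in P : w(x) = 1\}$ is a face of $P$ with vertex set $\{\chi_m : m \in \mathcal{F}\}$. As $\mathcal{F}$ is a maximum intersecting family, $|\mathcal{F}| = (2(n-1)-1)!! < (2n-1)!!$, so $F$ is a \emph{proper} face with exactly $(2(n-1)-1)!!$ vertices, indexed by a pairwise-intersecting family of matchings (this is the only place the intersecting hypothesis re-enters). The theorem thus reduces to a statement about $P$ alone: a proper face of $P$ whose vertices index a pairwise-intersecting family of $(2(n-1)-1)!!$ matchings is the face $\{x \in P : x_{ij} = 1\}$ for some edge $ij$ — whence $\mathcal{F}$, being its vertex set, equals $\mathcal{F}_{ij}$.

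To establish this I would use Edmonds' description of $P$ by the equations $x(\delta(v)) = 1$, the bounds $x_e \ge 0$, and the blossom inequalities $x(\delta(S)) \ge 1$ for odd $S$ with $3 \le |S| \le 2n-3$, and set $G := \bigcup_{m \in \mathcal{F}} m \subsetneq K_{2n}$, the support graph of $F$; then $\mathcal{F} \subseteq \mathrm{PM}(G)$, $G$ is matching covered, and $F$ is a face of the smaller polytope $P(G)$. If $F = P(G)$, then $\mathcal{F} = \mathrm{PM}(G)$, so all perfect matchings of $G$ pairwise share an edge (in particular $G$ is not Hamiltonian, since a Hamilton cycle would split into two disjoint perfect matchings), and I would close this case with an induction on $n$ — reducing along a forced edge or a nontrivial tight cut of $G$ — showing that such a $G$ has at most $(2(n-1)-1)!!$ perfect matchings, with equality only when $\mathrm{PM}(G) = \mathcal{F}_{ij}$. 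If instead $F$ is a proper sub-face of $P(G)$, then some blossom inequality $x(\delta(S)) \ge 1$ is tight throughout $F$, i.e. every matching of $\mathcal{F}$ crosses $S$ in exactly one edge; here I would again induct on $n$, conditioning on the unique crossing edge so as to split the analysis into perfect matching polytopes on $S$ and on $[2n] \setminus S$, and check that the intersecting condition together with the count $(2(n-1)-1)!!$ forces $S$ or $[2n] \setminus S$ to degenerate, ultimately pinning $\mathcal{F} = \mathcal{F}_{ij}$. This blossom case is the principal obstacle — the odd-cut facets are exactly what is absent in the bipartite (permutation) setting of Godsil and Meagher — and carrying the reduction across a tight odd cut while keeping exact control of the extremal count is the technical heart of the argument.
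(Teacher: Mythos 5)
Your setup---identifying the span of the vectors $v_{\mathcal{F}_{ij}}$ with $S^{2(n)} \oplus S^{2(n-1,1)}$ and passing to a face of the perfect matching polytope---matches the paper's, and your equivariance argument for the span is a clean alternative to the paper's rank computation $r(K_{2n}) = \binom{2n}{2} - 2n + 1 = \dim\bigl(S^{2(n)} \oplus S^{2(n-1,1)}\bigr)$. The gap is in the extremal step, which you correctly flag as the technical heart but never carry out: your split into ``$F = P(G)$'' versus ``a blossom is tight on $F$'' hands you, in each branch, an inductive extremal statement about intersecting families inside perfect matching polytopes of subgraphs of $K_{2n}$ that is of comparable difficulty to the theorem itself, and carrying a sharp count of the form $(2(n-1)-1)!!$ across a tight odd cut is not a routine reduction.

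The paper avoids all of this by working with the \emph{opposite} face. Since $z = Mh$ is $0/1$, the functional $h^Tx$ has minimum $0$ and maximum $1$ on $PM(K_{2n})$, giving parallel faces $F_0$ (matchings outside $\mathcal{F}$) and $F_1$ (matchings in $\mathcal{F}$) that partition the vertices. Instead of dissecting $F_1$, one notes that the nonempty proper face $F_0$ lies in some facet, and by Corollary~\ref{cor:facetpoly} every facet of $PM(K_{2n})$ is either $F_e = \{x : x_e = 0\}$ or $F_C = \{x : \sum_{e \in C} x_e = 1\}$ for an odd cut $C$. If $F_0 \subseteq F_C$ with odd side of size $s$, then $F_1$ contains every matching crossing $C$ more than once, so $|\mathcal{F}| \geq (2n-1)!! - s!!\,(2n-s)!!$, which exceeds $(2(n-1)-1)!!$ for $n>2$ and all admissible $s$---a contradiction. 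Hence $F_0 \subseteq F_e$ for some edge $e$, forcing $x_e = 1$ on every vertex of $F_1$, and the cardinality count then pins $\mathcal{F} = \mathcal{F}_e$. The two ideas your sketch is missing are precisely this passage to the complementary face $F_0$ (so that one learns something about the matchings \emph{not} in $\mathcal{F}$) and the use of the Edmonds--Lov\'asz--Pulleyblank facet classification together with a one-line count, rather than an open-ended induction on tight-cut reductions.
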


Let $G = (V,E)$ be a graph on an even number of vertices.  For every perfect matching $m$ of $G$ there is an associated incidence vector $x$ defined such that $x_i = 1$ if $i \in m$; otherwise, $x_i = 0$.  The convex hull of the incidence vectors of perfect matchings of $G$ is denoted $PM(G)$ and is known as \emph{the perfect matching polytope} of $G$.  The following terminology is necessary in order to define $PM(K_{2n})$ as the solution set of a system of linear inequalities. 

For any $S \subseteq V$ or vertex $S \in V$, let $\delta(S)$ denote the set of edges with exactly one endpoint incident to $S$. An \emph{odd cut} $C$ of a graph $G$ is a set of edges of the form $\delta(S)$ where $S$ is a non-empty proper subset of $V$ of odd size, $|V \setminus S| > 1$, and $|S| > 1$. Let $\mathcal{C}$ be the set of all odd cuts of a graph. If $G$ is a connected graph such that every edge belongs to some perfect matching, then $x \in PM(G)$ if and only if it satisfies the set of linear inequalities below~\cite{Edmonds65}.
\begin{align}
x_i &\geq 0~~\forall i \in E\\
\sum_{i \in \delta(v)} x_i &= 1~~\forall v \in V\\
\sum_{i \in C} x_i &\geq 1~~\forall C \in \mathcal{C}
\end{align}
A special case of this polytope that has been thoroughly studied is \emph{Birkhoff polytope}, otherwise known as \emph{the perfect matching polytope of $K_{n,n}$}.  We refer the reader to~\cite{Pak00} for a succinct but thorough exposition of this polytope as well as its import to several branches of mathematics.  

Let $r(G)$ denote the maximum number of perfect matchings in graph $G$ whose incidence vectors are linearly independent over $\mathbb{R}$.  In~\cite{EdmondsPL82}, it is observed that
\[r(G) = 1 + \dim PM(G)\]
\begin{thm}
\cite{EdmondsPL82} Let $G = (V,E)$ be a connected graph such that every edge belongs to some perfect matching.  Then $\dim PM(G) = |E| - |V| + 1 - \beta$ where $\beta$ is the number of bricks in the brick decomposition of $G$.
\end{thm}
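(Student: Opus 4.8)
The statement is the Edmonds--Lov\'asz--Pulleyblank matching--rank formula, so the plan is to run the brick decomposition, induct on the number of tight--cut contractions, and at each stage work with Edmonds' linear description of $PM(G)$ recalled above. The first step is to convert the dimension into a rank: $\dim PM(G) = |E|$ minus the dimension of the space of affine equalities satisfied by every point of $PM(G)$, so I would pin that space down. Since $G$ is matching covered (every edge lies in a perfect matching), no inequality $x_e \ge 0$ is tight on all of $PM(G)$ and hence contributes nothing; the degree equalities $x(\delta(v)) = 1$ always contribute; and an odd--cut inequality $x(C) \ge 1$ is tight on all of $PM(G)$ exactly when $C$ is a \emph{tight cut} (every perfect matching meets $C$ in exactly one edge). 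The real content is that there are no further equalities --- equivalently, $PM(G)$ is full--dimensional inside the affine subspace carved out by the degree and tight--cut equalities --- which is the affine content of Edmonds' theorem. Thus
\[ \dim PM(G) = |E| - \rank\bigl\{\, x(\delta(v)) = 1,\ x(C) = 1 \ :\ v \in V,\ C \text{ a tight cut of } G \,\bigr\}. \]

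Next I would set up the induction. If $G$ has a nontrivial tight cut $C = \delta(S)$, form the two contractions $G_1 := G \cdot S$ and $G_2 := G \cdot \bar S$; these are again matching covered (multi)graphs with $|E(G)| = |E(G_1)| + |E(G_2)| - |C|$, $|V(G)| = |V(G_1)| + |V(G_2)| - 2$, and, by the theory of the decomposition, $\beta(G) = \beta(G_1) + \beta(G_2)$. The key lemma is the rank--additivity
\[ \dim PM(G) = \dim PM(G_1) + \dim PM(G_2) + 1 - |C|, \]
equivalently $r(G) = r(G_1) + r(G_2) - |C|$. I would prove it by matching up equality spaces: for $v$ in $S$, the degree equality of $G$ at $v$ equals that of $G_2$ at $v$; the degree equality of $G_2$ at the contracted vertex is exactly $x(C) = 1$; symmetrically for $\bar S$ and $G_1$; and --- using the uncrossing properties of tight cuts --- every tight--cut equality of $G$ lies in the span of $x(C) = 1$ together with the tight--cut equalities of $G_1$ and $G_2$. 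So the equality space of $G$ is the sum of those of $G_1$ and $G_2$, their intersection is the line spanned by $x(C)$, and subtracting from $|E|$ produces the $+1 - |C|$.

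The base case is a graph with no nontrivial tight cut, i.e.\ a brick or a brace, where only the trivial cuts $\delta(v)$ remain, so the equality space is spanned by the degree equalities alone. For a connected bipartite graph (a brace) these have rank $|V| - 1$, giving $\dim PM = |E| - |V| + 1$ and $\beta = 0$; for a connected non--bipartite graph (a brick) they have full rank $|V|$, giving $\dim PM = |E| - |V|$ and $\beta = 1$; either way $\dim PM = |E| - |V| + 1 - \beta$. To close the induction, set $f(G) := |E(G)| - |V(G)| + 1 - \beta(G)$; the three identities above show $f$ satisfies the same relation $f(G) = f(G_1) + f(G_2) + 1 - |C|$ as $\dim PM$, so induction on $|V(G)|$ with the brick/brace base case gives $\dim PM(G) = f(G)$.

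The two places I expect real work are (i) the full--dimensionality claim in the first step --- that degree and tight--cut equalities exhaust the equalities of $PM(G)$ (in particular that bricks and braces carry no hidden equalities) --- which is where Edmonds' odd--cut theorem is used essentially; and (ii) the rank--additivity lemma, which rests on Lov\'asz's uncrossing theory for tight cuts so that their equalities distribute correctly over the two contractions. Both of these, and the statement itself, also presuppose Lov\'asz's theorem that the number $\beta$ of bricks is an invariant of $G$, independent of the sequence of tight--cut contractions; without it the right--hand side would not even be well defined.
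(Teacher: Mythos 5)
The paper does not prove this theorem: it is quoted verbatim with the citation \cite{EdmondsPL82}, and the text immediately moves on to derive Corollary~\ref{cor:dimpoly} for $K_{2n}$ from it. So there is no "paper's own proof" to compare against; what you should be comparing to is the argument in Edmonds--Lov\'asz--Pulleyblank itself.

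Measured against that, your sketch is a faithful reconstruction of the right argument, and you have correctly isolated the two genuinely hard ingredients. The arithmetic of the recursion checks out: with $G_1, G_2$ the two tight-cut contractions across $C$, one has $|E(G)| = |E(G_1)| + |E(G_2)| - |C|$, $|V(G)| = |V(G_1)| + |V(G_2)| - 2$, and $\beta(G) = \beta(G_1) + \beta(G_2)$, so $f(G) = |E| - |V| + 1 - \beta$ satisfies $f(G) = f(G_1) + f(G_2) + 1 - |C|$; and the corresponding identity $r(G) = r(G_1) + r(G_2) - |C|$ for the matching rank is exactly what closes the induction. Your base cases are also right: for a connected bipartite brace the incidence matrix has rank $|V|-1$ (one dependence between the two sides) while for a connected non-bipartite brick it has rank $|V|$, matching $\beta = 0$ and $\beta = 1$ respectively. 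The two items you flag as requiring real work are precisely where the depth lies: (i) that the affine hull of $PM(G)$ is cut out by the degree and tight-cut equalities alone, with no hidden identities in a brick or brace, and (ii) that tight cuts "uncross" so that the equality space of $G$ is the sum of those of $G_1$ and $G_2$ meeting in the line spanned by $x(C)$. You are also right that the whole statement presupposes Lov\'asz's theorem that the multiset of bricks and braces is independent of the decomposition, so that $\beta$ is well defined. In short, you have not given a proof, but you have given an accurate and honest roadmap of the proof this paper is citing.
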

\noindent Since the size of the brick decomposition of a non-trivial clique on an even number of vertices is 1~\cite{EdmondsPL82}, with a little representation theory, we have the following corollary.
\begin{cor}\label{cor:dimpoly}
\[r(K_{2n}) = 1 + \dim M(K_{2n}) = \binom{2n}{2} - 2n + 1 = \dim S^{2(n)} \bigoplus S^{2(n-1,1)}\]
\end{cor}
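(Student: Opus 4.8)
The plan is to establish the three displayed equalities in succession; each one follows from a result recalled in this section together with a one-line computation, so the corollary is essentially an exercise in chaining together facts already on the table.

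The first equality, $r(K_{2n}) = 1 + \dim M(K_{2n})$, is exactly the observation of Edmonds, Lovász and Pulleyblank quoted just above, specialized to $G = K_{2n}$ (here $M(K_{2n})$ denotes the perfect matching polytope $PM(K_{2n})$). Its hypotheses are met: $K_{2n}$ is connected, and every edge of $K_{2n}$ lies in a perfect matching---indeed in one of the $1$-factors of a Lucas clique, which exists by the discussion following Theorem~\ref{clique}. Nothing further is needed here.

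For the second equality I would invoke the formula $\dim PM(G) = |E| - |V| + 1 - \beta$ with $G = K_{2n}$. One has $|E| = \binom{2n}{2}$ and $|V| = 2n$, while the brick decomposition of a non-trivial even clique consists of a single brick, so $\beta = 1$. Hence $\dim M(K_{2n}) = \binom{2n}{2} - 2n + 1 - 1 = \binom{2n}{2} - 2n$, and adding $1$ gives $\binom{2n}{2} - 2n + 1$.

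The third equality is the only place where representation theory intervenes, and it is a dimension count. By Thrall's decomposition $S^{2(n)}$ is the Specht module $S^{(2n)}$, the trivial $S_{2n}$-module, of dimension $1$, while $S^{2(n-1,1)} = S^{(2n-2,2)}$. Applying the hook length formula to the two-row shape $(2n-2,2) \vdash 2n$ (equivalently, the two-row identity $\dim S^{(a,b)} = \binom{a+b}{b} - \binom{a+b}{b-1}$) yields $\dim S^{(2n-2,2)} = \binom{2n}{2} - \binom{2n}{1} = \binom{2n}{2} - 2n$. Summing, $\dim\bigl(S^{2(n)} \oplus S^{2(n-1,1)}\bigr) = 1 + \binom{2n}{2} - 2n$, which matches the value obtained in the previous paragraph, and the chain of identities closes. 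I do not expect a genuine obstacle: the substance lives in the cited polytope theorems, and the remaining work is bookkeeping. The one point worth flagging is the claim that the brick decomposition of $K_{2n}$ is a single brick, where the standing assumption that the clique is non-trivial (so $2n \ge 4$ and $K_{2n}$ is itself a brick) is exactly what is being used.
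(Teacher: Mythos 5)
Your proof is correct and takes the same route the paper intends: specialize the Edmonds–Lovász–Pulleyblank formula with $\beta = 1$ for $K_{2n}$, then match against the dimension of the trivial plus standard module via the two-row hook length count $\dim S^{(2n-2,2)} = \binom{2n}{2} - 2n$. The paper merely gestures at this ("with a little representation theory"), so your write-up fills in exactly the bookkeeping the paper leaves implicit.
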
 	
Using highly non-trivial graph theory, the facet-inducing inequalities of $PM(G)$ are also characterized in~\cite{EdmondsPL82}.  Their characterization of the facet-inducing inequalities is much too involved to be stated in full, so we refer the interested reader to Theorem 6.3 of~\cite{EdmondsPL82} for the full graph-theoretical characterization of the facets of $PM(G)$.  Fortunately, when $G = K_{2n}$ the situation is drastically simplified, leading to the following straightforward corollary of Theorem 6.2 of~\cite{EdmondsPL82}.
\begin{cor}\label{cor:facetpoly}
The inequalities \emph{(6.1)} and \emph{(6.3)} are precisely the facet-inducing inequalities of $PM(K_{2n})$. Moreover, each facet of $PM(K_{2n})$ can be written in strictly one of the following forms.
\begin{align*}
F_e &= \{ x \in PM(K_{2n}) : x_e = 0\}\\
F_C &= \{ x \in PM(K_{2n}) : \sum_{e \in C} x_e = 1 \} 
\end{align*}
\end{cor}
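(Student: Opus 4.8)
The plan is to specialize the general facet description of perfect matching polytopes, Theorem 6.2 of~\cite{EdmondsPL82}, to $G = K_{2n}$; throughout we assume $2n \ge 4$, the case $2n = 2$ being trivial ($PM(K_2)$ is a point). For a matching-covered graph $G$ (connected, with every edge lying in some perfect matching), that theorem describes the facet-defining inequalities of $PM(G)$ in purely graph-theoretic terms: roughly, an inequality of type (6.1) survives for each edge $e$ such that the matching-covered core of $G - e$ has matching rank one less than that of $G$, and an inequality of type (6.3) survives for each odd cut $\delta(S)$ such that $G[S]$ and $G[\overline{S}]$ are connected and the contractions $G/S$ and $G/\overline{S}$ are again matching covered. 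The proof then amounts to checking that these conditions collapse for the complete graph, together with a dimension count to dispose of the degree equalities (6.2).

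First I would record that (6.2) is exactly the set of implied equalities of $PM(K_{2n})$: the vertex--edge incidence matrix of $K_{2n}$ has rank $2n$ since $K_{2n}$ is connected and non-bipartite, so (6.2) cuts the ambient space $\mathbb{R}^{\binom{2n}{2}}$ down to an affine subspace of dimension $\binom{2n}{2} - 2n$, which by Corollary~\ref{cor:dimpoly} is precisely $\dim PM(K_{2n})$. Hence no inequality of type (6.2) is facet-defining, and every facet of $PM(K_{2n})$ has the form $F_e$ or $F_C$ coming from an inequality of type (6.1) or (6.3).

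Next I would check that each such inequality is genuinely irredundant. For (6.1): the face $F_e$ is the convex hull of the matchings of $K_{2n}$ that avoid $e$, i.e.\ $PM$ of the matching-covered core $G_e$ of $K_{2n} - e$ (its subgraph of edges lying in some perfect matching); feeding $G_e$ into the dimension formula $\dim PM(G) = |E| - |V| + 1 - \beta$ of~\cite{EdmondsPL82}, and using that $G_e$ is a brick (so $\beta = 1$) for $2n \ge 6$, gives $\dim F_e = \dim PM(K_{2n}) - 1$, so $x_e \ge 0$ is facet-defining; the small case $2n = 4$, where $K_4 - e$ is the diamond and is not itself matching covered, I would handle by hand ($PM(K_4)$ is a triangle whose three edges are the faces $F_e$). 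For (6.3): for $S$ with $3 \le |S| \le 2n-3$ and $|S|$ odd, both $K_{2n}[S] = K_{|S|}$ and $K_{2n}[\overline{S}] = K_{2n-|S|}$ are connected, and contracting $S$ (respectively $\overline{S}$) yields a complete graph with one extra apex joined to everything by parallel edges, which is matching covered precisely because $2n - |S|$ and $|S|$ are odd. Thus the hypotheses of Theorem 6.2 of~\cite{EdmondsPL82} hold and $\sum_{e \in \delta(S)} x_e \ge 1$ is facet-defining. Combining the last two paragraphs shows that (6.1) and (6.3) are exactly the facet-defining inequalities.

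For the ``strictly one of the forms'' clause I would use that a face of a polytope is determined by the set of its vertices, here the set of perfect matchings lying on it: the set of matchings avoiding a fixed edge, the set avoiding a different fixed edge, and the set meeting a fixed odd cut in exactly one edge are pairwise distinct (for the mixed comparison, exhibit a matching that contains $e$ and crosses $\delta(S)$ exactly once), so the faces $F_e$, $F_{e'}$, $F_C$ are pairwise distinct and each facet is named by a unique inequality. I expect the real obstacle to be the middle step: confirming that the general side conditions of Theorem 6.2 of~\cite{EdmondsPL82}---in particular that no odd-cut inequality over $K_{2n}$ is redundant---are automatically satisfied, together with the mildly degenerate behaviour at $2n = 4$ (where $\mathcal{C}$ is empty and $K_4 - e$ is not matching covered). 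The rest is routine bookkeeping with the dimension formula and Corollary~\ref{cor:dimpoly}.
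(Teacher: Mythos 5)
The paper does not actually supply a proof of this corollary---it simply asserts that it is a ``straightforward corollary of Theorem 6.2 of \cite{EdmondsPL82}'' and leaves the specialization to the reader. Your proposal is exactly the missing derivation: you dispose of the degree equalities by a rank count matched against Corollary~\ref{cor:dimpoly}, verify the nonnegativity facets via the dimension formula applied to the matching-covered core of $K_{2n}-e$, verify the odd-cut facets by checking the contraction conditions of Theorem 6.2, and separate the two families by a vertex-set comparison. That is the same route the paper implicitly takes, just carried out. Two small points worth tightening if this were to be written in full: first, for $2n=4$ the set $\mathcal{C}$ is empty and the inequalities $x_e \ge 0$ pair up (e.g.\ $F_{12}=F_{34}$), so ``precisely the facet-inducing inequalities'' holds but several inequalities induce the same facet; you flag this but should state it explicitly. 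Second, you describe the hypotheses of Theorem 6.2 of \cite{EdmondsPL82} only ``roughly''---the argument is sound in outline, but a careful writeup should quote the exact conditions (on brick decompositions of contractions and on the matching-covered core after edge deletion) rather than paraphrase, since the whole content of the corollary is that those conditions collapse for complete graphs.
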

\noindent Finally, recall that any face of a polytope $P$ can be expressed as the intersection of $P$ with a hyperplane $H$.  In particular, if we let $h \in R^n$, then for each $a \in \mathbb{R}$,
\[H_a = \{x \in \mathbb{R}^n : h^Tx=a\}\] 
form hyperplanes that partition $\mathbb{R}^n$. If $P$ is a polytope, then there is some $a \in \mathbb{R}$ such that $P \cap H_a \neq \emptyset$.  It follows that by finding the maximum and minimum values of $a$ such that $P \cap H_a \neq \emptyset$, we find \emph{parallel} faces of $P$.
\begin{lem}\label{lem:parallelFaces}
Let $P$ be the convex hull of the rows of a matrix $M$, then $Mh=z$ and
\begin{align*}
F_{\min} &= \{ x \in P : h^Tx = \min(z)\}\\
F_{\max} &=\{ x \in P : h^Tx = \max(z)\}
\end{align*}
are parallel faces of $P$ where $\min(z)$ and $\max(z)$ denotes the minimum and maximum value of any component of $z$.  Moreover, if $z$ is a 0/1 vector, then $F_{\min}$ and $F_{\max}$ partition the vertices of $P$.
\end{lem}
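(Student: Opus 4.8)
The plan is to reduce everything to the elementary fact that a linear functional attains its extrema over a polytope at its vertices, combined with the observation that the vertices of $P$ are among the rows of $M$ that generate it.

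First I would write $m_1,\dots,m_k$ for the rows of $M$, so that $z_i = (Mh)_i = h^T m_i$ for each $i$. Since $P$ is bounded, the linear functional $x \mapsto h^T x$ attains its minimum and maximum over $P$, and it does so at vertices of $P$; as every vertex of $P$ is one of the $m_i$, the minimum value over $P$ equals $\min_i z_i = \min(z)$ and the maximum equals $\max_i z_i = \max(z)$. Consequently $h^T x \geq \min(z)$ for all $x \in P$, so $H_{\min} := \{x : h^T x = \min(z)\}$ is a supporting hyperplane of $P$ and $F_{\min} = P \cap H_{\min}$ is a nonempty face; dually, with $H_{\max} := \{x : h^T x = \max(z)\}$, the set $F_{\max} = P \cap H_{\max}$ is a face.

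Next, for parallelism I would simply note that $H_{\min}$ and $H_{\max}$ have the common normal vector $h$, hence are parallel hyperplanes; they coincide only in the degenerate case $\min(z) = \max(z)$, in which $P$ lies in a single hyperplane and $F_{\min} = F_{\max} = P$ (still parallel in the trivial sense). Thus $F_{\min}$ and $F_{\max}$ are parallel faces of $P$.

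Finally, suppose $z$ is a $0/1$ vector and set aside the degenerate case where $z$ is constant, so that $\min(z) = 0$ and $\max(z) = 1$. Any vertex $v$ of $P$ equals some $m_i$, whence $h^T v = z_i \in \{0,1\}$; therefore $v \in F_{\min}$ when $z_i = 0$ and $v \in F_{\max}$ when $z_i = 1$, so the vertex set of $P$ is covered, and no vertex lies in both faces since a point cannot satisfy $h^T x = 0$ and $h^T x = 1$ simultaneously. Hence $F_{\min}$ and $F_{\max}$ partition the vertices of $P$. I do not expect a genuine obstacle here; the only point requiring care is the degenerate constant-$z$ case, which I would flag explicitly, the rest being a short exercise in convexity.
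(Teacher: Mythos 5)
Your proof is correct. The paper states Lemma~\ref{lem:parallelFaces} without proof, treating it as a routine fact from polyhedral geometry, so there is no paper argument to compare against. Your argument is the standard one: the convex hull of a finite point set has its vertices among those points, a linear functional attains its extrema at vertices, and the two level sets $h^Tx = \min(z)$ and $h^Tx = \max(z)$ are supporting hyperplanes with common normal $h$, hence carve out parallel faces. A slightly more direct route to the extremal values avoids invoking the vertex-attainment fact: for any $x = \sum_i \lambda_i m_i \in P$ with $\lambda_i \geq 0$ and $\sum_i \lambda_i = 1$, one has $h^T x = \sum_i \lambda_i z_i \in [\min(z), \max(z)]$, and both endpoints are achieved at rows of $M$; this sidesteps any appeal to the extreme-point structure of $P$. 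Your handling of the degenerate constant-$z$ case is appropriate to flag, since the ``partition'' conclusion genuinely fails there (both faces equal $P$); in the application in the proof of Theorem~\ref{thm:polytope} this case does not arise because $z$ is the characteristic vector of a proper nonempty subset of $\mathcal{M}_{2n}$.
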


\subsubsection*{Proof of Theorem~\ref{thm:polytope}} Let $M$ be a $N!! \times \binom{2n}{2}$ binary matrix whose columns are the characteristic vectors $v_{ij}$ of the trivially intersecting families $\mathcal{F}_{ij}$.  By Theorem~\ref{thm:standardModule}, the columns of $M$ live in the sum of the trivial and standard modules, and by Corollary~\ref{cor:dimpoly}, the columns of $M$ span the sum of the trivial and standard modules.  It follows that for any maximum independent set $Z$ of $\Gamma$, its characteristic vector can be written as $z = Mh$.  Since $z$ is a 0/1 vector, it follows by Lemma~\ref{lem:parallelFaces} that $F_0$ and $F_1$ are parallel faces that partition the vertices of $PM(K_{2n})$.  Every face is contained in a facet, so there is some facet $F$ such that $F_0 \subseteq F$.  The following shows that if $z$ is the characteristic vector of a maximum independent set, then $F$ is of the form $F_e$ for some $e \in E$.

Suppose that $F_0 \subseteq F = F_C$ for some odd cut $C$. Then $\sum_{e \in C} x_e = 1$ for every vertex $x \in F_0$, or equivalently, each vertex of $F_0$ uses precisely one edge of $C$. Since $F_0$ and $F_1$ partition the vertices of $PM(K_{2n})$, we have $\sum_{e \in C} x_e > 1$ for all vertices $x \in F_1$.  Let $S$ be an odd set induced by $C$ and define $s := |S|= 2k+1$ for some $k > 0$. A simple counting argument reveals that there are $(2n-1)!! - s!!(2n-s)!!$ vertices of $F_1$, each of which corresponds to a non-zero component of $z$.  This gives us a contradiction as $z$ now has too many non-zero components to be the characteristic vector of a maximum independent set, that is,
\begin{align*}
(2n-1)!! - s!!(2n-s)!! &> (2(n-1)-1)!!\\
\frac{(2n-1)!!}{s!!(2n-s)!!} - 1 &> \frac{(2(n-1)-1)!!}{s!!(2n-s)!!}
\end{align*}
which by induction is true for $n > 2$ and any valid choice of $s$.

It follows that $F_0 \subseteq F_e$ for some $e = \{i,j\}$.  Since $x_e = 0$ $\forall x \in F_0$, we have $x_e = 1$ $\forall x \in F_1$.  The vertices of $F_1$ form the support of $z$, and since there are precisely $(2(n-1)-1)!!$ matchings of the form $x_e = 1$, it must be that $z = v_{ij}$ for some $i,j \in [2n]$, which completes the proof of the main result.

Notice that the proof of the main result only depended on properties of the $(n)$-associate $A_{(n)} \in \mathcal{A}$, so it is not difficult to see that the proofs of Theorems~\ref{thm:standardModule} and~\ref{thm:polytope} remain true when the graph in question is the $(n)$-associate rather than $\Gamma$.  Indeed, a Lucas clique of $\Gamma$ is also a maximum clique of $A_{(n)}$ and we only needed zonal spherical function values over the $(n)$-sphere.  
\begin{thm}
Theorem~\ref{thm:cycleEKR} is met with equality if and only if $\mathcal{F}$ is a trivially intersecting family of matchings. 
\end{thm}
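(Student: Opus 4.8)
The plan is to rerun the module method of Theorem~\ref{thm:standardModule} and the polyhedral argument of Theorem~\ref{thm:polytope} essentially verbatim, with the $(n)$-associate $A_{(n)} \in \mathcal{A}$ playing the role of the matching derangement graph $\Gamma$, as flagged in the remark preceding this theorem. First I would record that a family $\mathcal{F}$ with $x \cup y$ disconnected for all $x,y \in \mathcal{F}$ is precisely an independent set of $A_{(n)}$, since $x \cup y$ is connected exactly when $d(x,y) = (n)$. Next I would note that a Lucas clique is a maximum clique of $A_{(n)}$: it has $2n-1$ vertices by Lucas' theorem, and no clique of $A_{(n)}$ can be larger because matchings at cycle type $(n)$ are pairwise edge-disjoint and $K_{2n}$ has only $n(2n-1)$ edges. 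Hence the clique-coclique bound (Theorem~\ref{thm:coclique}) forces $|S| \leq N!!/(2n-1) = (2(n-1)-1)!!$ for every independent set $S$ of $A_{(n)}$, with equality attained by each $\mathcal{F}_{ij}$; so a non-Hamiltonian $\mathcal{F}$ meets the bound of Theorem~\ref{thm:cycleEKR} with equality exactly when it is a maximum independent set of $A_{(n)}$, and for such sets Corollary~\ref{cor:nonzero} is available.

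The second step is to observe that the proof of Theorem~\ref{thm:standardModule} used only three inputs, each of which survives the substitution of $A_{(n)}$ for $\Gamma$: the existence of a Lucas clique $C$ with $e \in C$, which is now a maximum clique of $A_{(n)}$; the evaluation $\omega_\lambda(C) = \omega_\lambda^{1^n} + 2(n-1)\,\omega_\lambda^{(n)}$, which involves only zonal spherical function values on the $(n)$-sphere; and Corollary~\ref{cor:nonzero}, available by the previous paragraph. Thus the same argument shows that the characteristic vector $v_S$ of any maximum independent set $S$ of $A_{(n)}$ lies in $S^{2(n)} \bigoplus S^{2(n-1,1)}$.

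Finally I would invoke the proof of Theorem~\ref{thm:polytope} unchanged: its only inputs are that the columns $v_{ij}$ of $M$ span $S^{2(n)} \bigoplus S^{2(n-1,1)}$ (Corollary~\ref{cor:dimpoly}), that $v_S$ lies in that span (just established), and that a maximum independent set has $(2(n-1)-1)!!$ elements (established above). Writing $v_S = Mh$, Lemma~\ref{lem:parallelFaces} produces parallel faces $F_0, F_1$ partitioning the vertices of $PM(K_{2n})$; the counting inequality $(2n-1)!! - s!!(2n-s)!! > (2(n-1)-1)!!$ rules out $F_0 \subseteq F_C$ for an odd cut $C$, so $F_0 \subseteq F_e$ for an edge $e = \{i,j\}$, whence $v_S = v_{ij}$ and $S = \mathcal{F}_{ij}$. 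Conversely each $\mathcal{F}_{ij}$ is a non-Hamiltonian family of size $(2(n-1)-1)!!$, which completes the characterization. I expect no genuine obstacle here; the only care needed is to confirm that none of the transplanted steps secretly relied on a feature of $\Gamma$ not shared by $A_{(n)}$ (regularity and largest eigenvalue, a maximum Lucas clique, and the $(n)$-sphere zonal values), which is exactly what the preceding remark asserts.
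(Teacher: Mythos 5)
Your proposal is correct and takes essentially the same approach as the paper, which presents the argument as a terse remark preceding the theorem: the proofs of Theorems~\ref{thm:standardModule} and~\ref{thm:polytope} are transplanted verbatim from $\Gamma$ to $A_{(n)}$, noting that only the Lucas clique and the zonal spherical function values on the $(n)$-sphere were ever used. Your write-up simply makes explicit the three inputs the paper's remark asserts carry over, which is a reasonable amount of added detail but not a different route.
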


\section{Open Questions}

Let $\eta$ be the minimum eigenvalue of the permutation derangement graph $\mathcal{D}$. By Theorem~\ref{thm:ratio}, we have $\eta \leq -{!n}/{(n-1)}$, which prompted Ku to conjecture that $\eta = -{!n}/{(n-1)}$.  Renteln later proved this conjecture using non-trivial symmetric function theory~\cite{Renteln07}.
For $n < 8$, it has been verified in \texttt{GAP}~\cite{GAP} that $\Gamma$ meets the Theorem~\ref{thm:ratio} with equality which motivates the following conjecture.
\begin{con}
The least eigenvalue of the matching derangement graph is $-\frac{!!n}{2(n-1)}$.  
\end{con}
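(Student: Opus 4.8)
Towards this conjecture, the plan is to adapt Renteln's symmetric-function proof of the Ku--Wales conjecture~\cite{Renteln07} to the Gelfand pair $(S_{2n},H_n)$, with zonal polynomials playing the role of Schur functions. By Theorem~\ref{thm:eigs} the eigenvalue of $\Gamma$ on the module $S^{2\lambda}$ is
\[ \eta_\lambda \;=\; \sum_{\mu}\, |\Omega_\mu|\,\omega_\lambda^\mu, \]
where $\mu$ ranges over the partitions of $n$ all of whose parts are at least $2$ (the derangement cycle types). We already know that $\eta_{(n)}=\,!!n$ is the Perron eigenvalue and, from the equitable partition $(\mathcal{F}_{ij},\mathcal{M}_{2n}\setminus\mathcal{F}_{ij})$ together with Theorem~\ref{thm:standardModule}, that $-\,!!n/(2(n-1))$ is the eigenvalue on the standard module $S^{2(n-1,1)}$; so the task is to show $\eta_\lambda \ge -\,!!n/(2(n-1))$ for every $\lambda\vdash n$ with $\lambda\neq(n)$.

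First I would set up a generating identity for the weighted derangement sum. By Lemma~\ref{lem:sphereSize}, $|\Omega_\mu| = 2^{\,n-\ell(\lambda)}n!/z_\mu$, and the value $\omega_\lambda^\mu$ is read off from the transition matrix between power sums and zonal polynomials (Jack polynomials at $\alpha=2$); see~\cite{MacDonald95}. Hence $\eta_\lambda$ is, up to a normalization independent of $\lambda$, the coefficient of $Z_\lambda$ in $\sum_\mu |\Omega_\mu|\,p_\mu$. The relevant symmetric function $\sum_{n}t^n\sum_{\mu\vdash n,\ \mu_i\ge2}\frac{p_\mu}{2^{\ell(\mu)}z_\mu}$ equals $\exp\bigl(\sum_{k\ge2}\frac{p_k t^k}{2k}\bigr)$, the zonal analogue of the exponential driving Renteln's computation, and comparing it with $\exp\bigl(\sum_{k\ge1}\frac{p_k t^k}{2k}\bigr)$ produces a zonal inclusion--exclusion for $\eta_\lambda$. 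From this I would extract a recurrence expressing $\eta_\lambda$ at $n$ through eigenvalues at $n-1$ and $n-2$ attached to the shapes obtained by deleting suitable corner cells of $\lambda$, parallel to the recurrence $!!n=2(n-1)(\,!!(n-1)+\,!!(n-2))$ for the degree. An induction on $n$ would then deliver the bound $|\eta_\lambda|\le\,!!n/(2(n-1))$ together with enough sign information to see that equality on the negative side forces $\lambda=(n-1,1)$.

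For the inductive step the shapes needing attention are, as in the proof of Theorem~\ref{thm:standardModule}, the near-hooks $(n-k,1^k)$ and the two-row-plus-hook shapes $(n-j,j-k,1^k)$; the monotonicity argument behind Lemma~\ref{lem:zonalCycle} --- moving the bottom-left cell to the upper-right corner strictly increases the governing product --- together with the base values from Lemma~\ref{lem:zonalId} provides a template. The new content over Theorem~\ref{thm:cycleEKR} is that one must control the \emph{full} derangement sum $\sum_\mu|\Omega_\mu|\omega_\lambda^\mu$, not merely the single contribution of the $(n)$-sphere.

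The main obstacle is that zonal polynomials are far less tractable than Schur functions: there is no Jacobi--Trudi determinant, and the Pieri and Murnaghan--Nakayama analogues carry extra rational weights, so both the closed form for $\eta_\lambda$ and the cross-$\lambda$ monotonicity estimates will be more delicate than in the symmetric-group case. One clean reformulation worth exploiting is that, by Theorem~\ref{thm:standardModule}, the balanced characteristic vector $v_S-\tfrac{|S|}{|\mathcal{M}_{2n}|}\mathbf{1}$ of any maximum coclique $S$ of $\Gamma$ lies in the single eigenspace $S^{2(n-1,1)}$, so the ratio bound of Theorem~\ref{thm:ratio} is met with equality by $\Gamma$ \emph{if and only if} the conjecture holds; an independent certificate of tightness --- say a fractional relaxation, or a Hoffman-type weighting of the matching association scheme that sidesteps computing the spectrum directly --- would therefore suffice.
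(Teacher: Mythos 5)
This statement is an open \emph{conjecture} in the paper, not a theorem: the author records only that it has been verified computationally for $n<8$ and offers no proof. There is therefore nothing to compare your text against, and more importantly your text is not itself a proof. It is a research outline, phrased throughout in the conditional (``I would set up,'' ``I would extract,'' ``an induction on $n$ would then deliver''), and you yourself flag the central obstacle --- that zonal polynomials lack the Jacobi--Trudi and clean Murnaghan--Nakayama machinery Renteln relies on --- without resolving it.

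Concretely, the gaps are these. First, the generating identity $\sum_{n} t^n \sum_{\mu\vdash n,\ \mu_i\ge 2} \frac{p_\mu}{2^{\ell(\mu)} z_\mu} = \exp\bigl(\sum_{k\ge 2} \frac{p_k t^k}{2k}\bigr)$ is correct, and $\eta_\lambda$ is indeed, up to a $\lambda$-independent scalar, the $Z_\lambda$-coefficient of that series; but the passage from this to a recurrence for $\eta_\lambda$ in terms of corner-deleted shapes at $n-1$ and $n-2$ is precisely the missing content, not a consequence of what you have written. Renteln's argument for $S_n$ exploits a determinantal expansion of Schur functions that has no known zonal analogue, so ``parallel to the recurrence $!!n = 2(n-1)(!!(n-1) + !!(n-2))$'' is an analogy, not a derivation. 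Second, the inductive monotonicity estimates over near-hooks $(n-k,1^k)$ and shapes $(n-j,j-k,1^k)$ are asserted by reference to the one-sphere argument of Theorem~\ref{thm:standardModule}, but that theorem only controls $\omega_\lambda^{(n)}$, whereas here you must bound the full signed sum $\sum_\mu |\Omega_\mu|\,\omega_\lambda^\mu$ over all derangement types $\mu$; the cancellation structure is entirely different and you give no handle on it. Third, your closing reformulation --- that the conjecture is equivalent to $\Gamma$ meeting the ratio bound with equality --- is correct arithmetic (since $\alpha(\Gamma)=(2(n-1)-1)!!$ is already known, tightness of $\alpha \le v\cdot\frac{-\eta}{k-\eta}$ forces $\eta=-!!n/(2(n-1))$ and conversely), but it is a restatement, not a proof; a ``Hoffman-type weighting that sidesteps computing the spectrum'' is exactly the kind of certificate whose existence is what remains to be established. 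In short, the direction is sensible and matches what the paper suggests by analogy with the Ku--Wales/Renteln line of work, but every load-bearing step in the argument is still open.
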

\noindent In~\cite{KuW10}, Ku and Wales show that the spectrum of $\mathcal{D}$ possesses the so-called \emph{alternating sign property}.  For any $\lambda \vdash n$, define the \emph{depth} of $\lambda$ to be $d(\lambda) := n - \lambda_1$, i.e., the number of cells under the first row of $\lambda$. For $n < 8$, the matching derangement graph too possesses this property and so the following is likely true.
\begin{con}
$\Gamma$ has the alternating sign property, that is, for any $\lambda \vdash n$
\[ \emph{sign}(\eta_{\lambda}) = (-1)^{d(\lambda)}\]
where $\eta_{\lambda}$ is an eigenvalue of $\Gamma$.
\end{con}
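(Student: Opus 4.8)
I would attack this the way Ku--Wales and Renteln handled the permutation derangement graph~\cite{KuW10,Renteln07}, with the zonal spherical functions $\omega^\mu_\lambda$ and zonal polynomials (Jack polynomials at $\alpha=2$) in place of irreducible characters and Schur functions. The first move is to linearize the derangement condition by inclusion--exclusion on shared edges. Since the number of edges common to matchings $m,m'$ is the multiplicity of $1$-parts in $d(m,m')$ and $[r=0]=\sum_{j\ge0}(-1)^j\binom rj$, the adjacency matrix of $\Gamma$ splits inside the matching association scheme as $A_\Gamma=\sum_{j=0}^n(-1)^j B_j$ with $(B_j)_{m,m'}=\binom{|m\cap m'|}{j}$. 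Each $B_j$ factors as $B_j=F_jF_j^{\!\top}$, where $F_j$ is the incidence matrix of matchings against $j$-edge partial matchings, so $B_j$ is positive semidefinite; writing $\beta_j(\lambda)\ge 0$ for its eigenvalue on $S^{2\lambda}$ (a well-defined scalar because $(S_{2n},H_n)$ is a Gelfand pair) gives $\eta_\lambda=\sum_{j=0}^n(-1)^j\beta_j(\lambda)$. A short manipulation with Lemma~\ref{lem:sphereSize} and the standard expansion of power sums in zonal polynomials recasts this: $\eta_\lambda$ is a positive multiple of the coefficient of $Z_\lambda$ in the degree-$n$ component of $\exp\!\big(\sum_{k\ge 2}p_k/(2k)\big)$, the zonal analogue of the derangement exponential generating function $e^{-x}/(1-x)$ that powers the permutation case (it specializes to $e^{-x/2}(1-x)^{-1/2}$ in one variable).

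Next I would locate the support of $\beta_\bullet(\lambda)$. Decomposing $\mathbb{C}[j\text{-edge partial matchings}]\cong\operatorname{Ind}_{(S_2\wr S_j)\times S_{2(n-j)}}^{S_{2n}}\mathbf 1$ and applying the Pieri rule for zonal polynomials, $S^{2\lambda}$ occurs in this module exactly when some $\nu\vdash j$ interleaves $\lambda$ (that is, $\lambda_i\ge\nu_i\ge\lambda_{i+1}$ for all $i$), which happens precisely for $d(\lambda)\le j\le n$; since $F_j$ is $S_{2n}$-equivariant and $S^{2\lambda}$ is multiplicity-free in $\mathbb{C}\mathcal{M}_{2n}$ by Thrall's theorem, this yields $\beta_j(\lambda)>0\iff d(\lambda)\le j\le n$ and $\beta_j(\lambda)=0$ for $j<d(\lambda)$. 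Hence $\eta_\lambda=(-1)^{d(\lambda)}\sum_{i=0}^{\lambda_1}(-1)^i\beta_{d(\lambda)+i}(\lambda)$ with $\beta_{d(\lambda)}(\lambda)>0$, and the conjecture --- including the implicit claim $\eta_\lambda\neq 0$ --- reduces to the single inequality $\sum_{i\ge 0}(-1)^i\beta_{d(\lambda)+i}(\lambda)>0$.

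The hard step, and the one I expect to be the main obstacle, is this last inequality. I see two routes. The Renteln route is to produce a closed product formula for $\beta_j(\lambda)$ --- equivalently for the sums of Jack--Pieri coefficients that appear when $\exp\!\big(\sum_{k\ge2}p_k/(2k)\big)$ is expanded in the $Z_\lambda$-basis --- in terms of the contents of $2\lambda$ at Jack parameter $2$, using Stanley's evaluations of Jack polynomials, and then to verify that the alternating sum telescopes to a quantity manifestly of sign $(-1)^{d(\lambda)}$, exactly as the derangement generating function does for $\mathcal D$. The Ku--Wales route is to set up a recursion expressing $\eta^{(n)}_\lambda$ through eigenvalues $\eta^{(n-1)}_\mu$ and $\eta^{(n-2)}_\mu$ --- the shape of the recurrence $!!n=2(n-1)(!!(n-1)+!!(n-2))$ strongly suggests a two-step recursion --- obtained from the tower $(S_{2(n-2)},H_{n-2})\subseteq(S_{2(n-1)},H_{n-1})\subseteq(S_{2n},H_n)$ together with the partition of $\mathcal{M}_{2n}$ into $2n-1$ copies of $\mathcal{M}_{2(n-1)}$ by the partner of a fixed vertex, and then to induct on $n$, the base cases $n\le 3$ being read from Table~\ref{table:spectrum}. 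In either approach the genuine difficulty is quantitative control of the $\beta_j(\lambda)$: strict monotonicity $\beta_{d(\lambda)}(\lambda)>\beta_{d(\lambda)+1}(\lambda)>\cdots$ would finish the proof immediately as an alternating series with decreasing terms, and the data in Table~\ref{table:spectrum} are consistent with it, but I would not expect monotonicity to hold in general --- so, just as for the permutation derangement graph, some real symmetric-function input (a Jack-$2$ analogue of Renteln's content product, or sharp bounds on Jack--Pieri coefficient sums) appears unavoidable.
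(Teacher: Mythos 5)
This statement is posed in the paper as an open conjecture, supported only by machine verification for $n<8$; the paper contains no proof of it, so there is no ``paper's proof'' to match. What you have written is a research programme, not a proof, and you say so yourself: after the reduction, everything hinges on the inequality $\sum_{i\ge 0}(-1)^i\beta_{d(\lambda)+i}(\lambda)>0$, which you leave unestablished. That inequality \emph{is} the conjecture (together with the nonvanishing $\eta_\lambda\neq 0$), so the genuine gap is precisely the statement to be proved. In the permutation case the analogous positivity is the content of Ku--Wales' theorem and required real work (Renteln's content-product formula, or delicate recursions); nothing in your outline supplies the zonal/Jack-$2$ analogue of that input, and you correctly flag that naive monotonicity of the $\beta_j(\lambda)$ cannot be expected.

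That said, the reduction itself is sound and worth recording. The inclusion--exclusion splitting $A_\Gamma=\sum_j(-1)^jB_j$ with $(B_j)_{m,m'}=\binom{|m\cap m'|}{j}$ and $B_j=F_jF_j^{\top}$ positive semidefinite is correct, and the eigenvalues $\beta_j(\lambda)$ are well defined by multiplicity-freeness of $1_{H_n}^{S_{2n}}$. One step deserves more care: from $S^{2\lambda}$ occurring in $\operatorname{Ind}_{(S_2\wr S_j)\times S_{2(n-j)}}^{S_{2n}}\mathbf 1$ you conclude $\beta_j(\lambda)>0$, but a priori the equivariant map $F_j^{\top}$ could still annihilate the particular copy of $S^{2\lambda}$ sitting inside $\mathbb{C}\mathcal{M}_{2n}$ even when the target contains that irreducible; occurrence in the target gives only the possibility of a nonzero restriction, so you need an extra argument (e.g.\ comparing with $F_j^{\top}F_j$, whose image visibly contains that isotypic component) to get strict positivity for $j\ge d(\lambda)$. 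With that repaired, your identity $\eta_\lambda=(-1)^{d(\lambda)}\sum_{i=0}^{\lambda_1}(-1)^i\beta_{d(\lambda)+i}(\lambda)$ is a clean reformulation of the conjecture and a plausible route to it, but as it stands the conjecture remains open.
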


For future work, one would expect the next step to be toward an algebraic proof of the EKR theorem for $n/k$-uniform partitions of $[kn]$ for $k >2$; however, the following theorem of Saxl is something of an albatross.
\begin{thm}\label{thm:gelfandClassification}
Let $n > 18$ and $H \leq S_n$. If $1^{S_n}_H$ is multiplicity free, then one of the following holds:
\begin{enumerate}
\item $A_{n-k} \times A_k \leq H \leq S_{n-k} \times S_k$ for some $k$ with $0 \leq k < n/2$;
\item $n = 2k$ and $A_k \times A_k < H \leq S_k \wr S_2$;
\item $n = 2k$ and $H \leq S_2 \wr S_k$ with $[S_2 \wr S_k : H] \leq 4$;
\item $n = 2k + 1$ and $H$ fixes a point of $[1, n]$ and is one of the subgroups in
\emph{(2)} or \emph{(3)} on the rest of $[1, n]$;
\item $A_{n-k} \times G_k \leq H \leq S_{n-k} \times G_k$ where $k = 5,6,9$ and $G_k$ is $AGL(1, 5),GL(2, 5)$ or $P\Gamma L(2, 8)$ respectively.
\end{enumerate}
\end{thm}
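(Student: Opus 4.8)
The statement is a theorem of Saxl, so I describe the shape of its proof rather than re-deriving the auxiliary results it rests on. The plan is to extract a lower bound on $|H|$ from multiplicity-freeness, split into the intransitive, imprimitive-transitive and primitive cases, combine that bound in each case with structural information (Young's rule, orders of wreath products, bounds on orders of primitive groups) to cut down to the listed families, and finally pin down the exact endpoints of each interval by counting double cosets.

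The first step is to turn the hypothesis into a numerical constraint. By Theorem~\ref{thm:gelfandPair}, $1_H^{S_n}$ is multiplicity-free if and only if the number of $(H,H)$-double cosets of $S_n$, which equals $\langle 1_H^{S_n}, 1_H^{S_n}\rangle$, equals the number of irreducible constituents of $1_H^{S_n}$. If $\Lambda \subseteq \{\lambda \vdash n\}$ indexes those constituents, then $[S_n:H] = \sum_{\lambda \in \Lambda} f^\lambda \le \sum_{\lambda \vdash n} f^\lambda \le \sqrt{p(n)\,n!}$ by Cauchy--Schwarz and $\sum_{\lambda\vdash n}(f^\lambda)^2 = n!$, equivalently $|H| \ge \sqrt{n!/p(n)}$. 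This is the main lever: for large $n$, $H$ cannot be small. A complementary observation is that multiplicity-freeness passes to overgroups — if $H \le K \le S_n$ then $1_K^{S_n}$ is a subrepresentation of $1_H^{S_n}$ — so each family in the statement is really an interval whose top is a natural ``large'' subgroup $K$ with $1_K^{S_n}$ multiplicity-free and whose bottom is the smallest subgroup of $K$ that retains the property.

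Next I would run the trichotomy on the action of $H$ on $[n]$. If $H$ is intransitive, then after grouping orbits $H \le S_{n-k}\times S_k$ for some $0\le k < n/2$ (and, when $n=2k$, possibly $H \le S_k \wr S_2$ since the two blocks may be swapped); Young's rule shows $1_{S_{n-k}\times S_k}^{S_n}$ is multiplicity-free, and analysing the projections of $H$ to the two factors a double-coset count shows the property survives exactly while $H \ge A_{n-k}\times A_k$, except that the $S_k$-factor may be shrunk to a multiplicity-free \emph{primitive} group on $k$ points — the finite classification of which isolates $k\in\{5,6,9\}$ with $G_k\in\{AGL(1,5),GL(2,5),P\Gamma L(2,8)\}$, giving cases (1) and (5), while the block-swap variant gives case (2). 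If $H$ is transitive but imprimitive it preserves a block system with $n/b$ blocks of size $b$, $1<b<n$, so $H \le S_b \wr S_{n/b}$ and $|H| \le (b!)^{n/b}(n/b)!$; comparing with $|H|\ge\sqrt{n!/p(n)}$ and using Stirling forces $b=2$ or $n/b=2$, hence $n=2k$ and $H\le S_2\wr S_k$ or $H\le S_k\wr S_2$, with floors index $\le 4$ in $S_2\wr S_k$ (case (3)) respectively $A_k\times A_k$ in $S_k\wr S_2$ (case (2)), and combining a fixed point of $H$ with the same imprimitive analysis on the remaining $2k$ points produces case (4) for odd $n=2k+1$. Finally, if $H$ is primitive and does not contain $A_n$, I would invoke bounds on orders of primitive groups (Bochert's minimal-degree bound, the Praeger--Saxl bound $|H|<4^n$, or sharper CFSG-based estimates), which together with an elementary Stirling comparison contradict $|H|\ge\sqrt{n!/p(n)}$ once $n$ is large, the residual small range just above $18$ being dispatched by a finer minimal-degree estimate or a finite check; after O'Nan--Scott no large primitive family survives either.

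The last step is to pin the floors precisely: for each admissible top subgroup $K$ one determines the minimal $H\le K$ with $1_H^{S_n}$ multiplicity-free by directly counting $(H,H)$-double cosets and matching the total against the known constituent list of $1_K^{S_n}$, producing the exact endpoints $A_{n-k}\times A_k$, the index-$\le 4$ condition, $A_k\times A_k$, and so on. I expect the genuinely hard ingredient to be the primitive case: ruling out large primitive $H$ ultimately leans on the classification of finite simple groups, and the most delicate bookkeeping is in the exact endpoints — the index-$\le 4$ bound in (3) and the appearance of precisely $AGL(1,5)$, $GL(2,5)$, $P\Gamma L(2,8)$ in (5) admit no uniform description and must be extracted from explicit double-coset or character computations together with the small-degree classification of multiplicity-free primitive groups.
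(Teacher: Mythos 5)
The paper offers no proof of this statement: it is Saxl's classification of the multiplicity-free permutation representations of $S_n$, imported verbatim as background for the discussion of which Gelfand pairs $(S_n,K)$ exist (and the paper immediately points to the Godsil--Meagher refinement for all $n$). So there is no in-paper argument to compare yours against. On its own terms, your outline does reproduce the architecture of the known proof: the inequality $|H|\ge n!/\sum_\lambda f^\lambda \ge \sqrt{n!/p(n)}$ coming from $\dim 1_H^{S_n}=\sum_{\lambda\in\Lambda}f^\lambda$, the heredity of multiplicity-freeness under passing to overgroups, the trichotomy intransitive / transitive-imprimitive / primitive, the Stirling comparison that kills block systems with $b\ge 3$ and $n/b\ge 3$, and the appeal to order bounds for primitive groups not containing $A_n$. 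These are the right levers and they are correctly stated.

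That said, what you have written is a proof plan, not a proof, and two of your glosses are actually imprecise. First, in the intransitive case you need to rule out three or more nontrivial orbits (via Littlewood--Richardson multiplicities in $h_{n_1}h_{n_2}h_{n_3}$) before you may write $H\le S_{n-k}\times S_k$; ``after grouping orbits'' hides this. Second, your criterion for case (5) --- that ``the $S_k$-factor may be shrunk to a multiplicity-free primitive group on $k$ points'' --- is not the right condition: every $2$-transitive group of degree $k$ is multiplicity-free on $k$ points, yet only $AGL(1,5)$, $GL(2,5)$ and $P\Gamma L(2,8)$ survive; the actual constraint is that $1_{S_{n-k}\times G_k}^{S_n}$ itself be multiplicity-free, which is a condition on the double cosets of $S_{n-k}\times G_k$ in $S_n$ (equivalently on the rank/subdegrees of $G_k$), and extracting exactly these three groups requires the CFSG-based list of small-rank primitive groups. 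Finally, the primitive case is not closed by the bounds you cite at the stated threshold: at $n=19$ one has $4^{19}\approx 2.7\times 10^{11}$ while $\sqrt{19!/p(19)}\approx 1.6\times 10^{7}$, so the Praeger--Saxl bound does not contradict the order lower bound there, and the ``finer minimal-degree estimate or finite check'' you wave at is precisely where the hypothesis $n>18$ earns its keep. None of this makes your outline wrong, but every load-bearing step is deferred to results you do not state or verify, so as a proof of the theorem it is incomplete.
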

Theorem~\ref{thm:gelfandClassification} has been further refined by Godsil and Meagher, who in~\cite{GodsilM10} provide a complete list of the multiplicity-free permutation representations of $S_n$ for all $n$.  By Theorem~\ref{thm:gelfandPair} this gives a complete list of the infinite families of finite Gelfand pairs of the form $(S_n,K)$ where $K \leq S_n$.  For $k > 2$, we see that $1^{S_{kn}}_{S_k \wr S_n}$ is not multiplicity-free, hence the $(S_k \wr S_n) \backslash S_{kn} / (S_k \wr S_n)$-association scheme is typically not commutative.  In general, determining the multiplicities of the irreducibles of $1^{G}_{K}$ for arbitrary $K \leq G$ is difficult, even if $K$ is restricted to be of the form $S_k \wr S_n$.  Progress in this direction is related to Folkes' conjecture; however, most of the techniques in this area do not seek upper bounds on multiplicities, which would be beneficial in our setting. 

Recall that the module method relies on a union of members of an association scheme $\mathcal{A}$ meeting the clique-coclique bound with equality.  From~\cite{GodsilM09} and this work, we know that if $\Gamma$ is the union of fixed-point-free associates of the conjugacy class association scheme of $S_n$ or the matching association scheme, then the clique-coclique bound is always met with equality.  This is because for every order there always exists degenerate finite geometries, namely latin squares and 1-factorizations of $K_{2n}$, which are in one-to-one correspondence with cliques of size $n$ and $2n-1$ respectively of $\Gamma$.  If $\Gamma$ is a union of associates that have at most a single fixed point, then maximum cliques are cryptomorphic to unwieldy finite geometries such as projective planes and abstract hyperovals that do not exist for all orders~\cite{Polster97}.  The nonexistence of these geometric objects for certain orders implies that there is no clique in $\Gamma$ large enough to meet the clique-coclique bound with equality.  Unfortunately, knowing precisely when this happens is equivalent to the intractable problems of classifying projective planes and abstract hyperovals.  

Finally, we note that the method used by Ellis, Friedgut, and Pilpel in~\cite{EllisFP11} ignores clique structure, making it better suited for solving $t$-intersecting problems with the tradeoff that the results hold only for sufficiently large $n$.  Plausible future work would be to use their method over the Gelfand pair $(S_{2n}, S_2 \wr S_n)$ to give a proof of the full version of the Erd\H{o}s-Ko-Rado theorem for $t$-intersecting families of perfect matchings.

\bibliographystyle{plain}
\bibliography{../../master}

\begin{thebibliography}{10}

\bibitem{AhmadiPhD}
Bahman Ahmadi.
\newblock {\em Maximum Intersecting Families of Permutations}.
\newblock PhD thesis, University of Regina, 7 2013.

\bibitem{BannaiI84}
E.~Bannai and T.~It{\=o}.
\newblock {\em Algebraic combinatorics I: association schemes}.
\newblock Mathematics lecture note series. Benjamin/Cummings Pub. Co., 1984.

\bibitem{Berge57}
Claude Berge.
\newblock Two theorems in graph theory.
\newblock {\em Proceedings of the National Academy of Sciences},
  43(9):842--844, 1957.

\bibitem{Cameron76}
P.J. Cameron.
\newblock {\em Parallelisms of Complete Designs}.
\newblock Cambridge Commonwealth Series. Cambridge University Press, 1976.

\bibitem{Chvatal75}
V~Chvátal.
\newblock On certain polytopes associated with graphs.
\newblock {\em Journal of Combinatorial Theory, Series B}, 18(2):138 -- 154,
  1975.

\bibitem{Diaconis88}
Persi Diaconis.
\newblock {\em Group representations in probability and statistics}.
\newblock Institute of Mathematical Statistics Lecture Notes---Monograph
  Series, 11. Institute of Mathematical Statistics, Hayward, CA, 1988.

\bibitem{EdmondsPL82}
J.~Edmonds, W.R. Pulleyblank, and L.~Lovász.
\newblock Brick decompositions and the matching rank of graphs.
\newblock {\em Combinatorica}, 2(3):247--274, 1982.

\bibitem{Edmonds65}
Jack Edmonds.
\newblock {Paths, trees, and flowers}.
\newblock {\em Canadian Journal of Mathematics}, 17:449--467, February 1965.

\bibitem{EllisFP11}
D.~Ellis, E.~Friedgut, and H.~Pilpel.
\newblock Intersecting families of permutations.
\newblock {\em J. Amer. Math. Soc.}, 24:649--682, 2011.

\bibitem{FranklD77}
Peter Frankl and Mikhail Deza.
\newblock On the maximum number of permutations with given maximal or minimal
  distance.
\newblock {\em Journal of Combinatorial Theory, Series A}, 22(3):352 -- 360,
  1977.

\bibitem{GAP}
The GAP~Group.
\newblock {\em {GAP -- Groups, Algorithms, and Programming, Version 4.7.5}},
  2014.

\bibitem{GodsilM10}
Chris Godsil and Karen Meagher.
\newblock Multiplicity-free permutation representations of the symmetric group.
\newblock {\em Annals of Combinatorics}, 13(4):463--490, 2010.

\bibitem{GodsilRoyle}
Chris Godsil and Gordon Royle.
\newblock {\em Algebraic graph theory}, volume 207 of {\em Graduate Texts in
  Mathematics}.
\newblock Springer-Verlag, New York, 2001.

\bibitem{GodsilM09}
Chris~D. Godsil and Karen Meagher.
\newblock A new proof of the erd{\"o}s-ko-rado theorem for intersecting
  families of permutations.
\newblock {\em Eur. J. Comb.}, 30(2):404--414, 2009.

\bibitem{KuW10}
Cheng~Yeaw Ku and David~B. Wales.
\newblock Eigenvalues of the derangement graph.
\newblock {\em Journal of Combinatorial Theory, Series A}, 117(3):289 -- 312,
  2010.

\bibitem{KuW13}
Cheng~Yeaw Ku and Kok~Bin Wong.
\newblock Solving the ku–wales conjecture on the eigenvalues of the
  derangement graph.
\newblock {\em European Journal of Combinatorics}, 34(6):941 -- 956, 2013.

\bibitem{Lucas92}
E.~Lucas.
\newblock {\em R{\'e}cr{\'e}ations math{\'e}matiques}.
\newblock Number v. 2 in R{\'e}cr{\'e}ations math{\'e}matiques. A. Blanchard,
  1892.

\bibitem{MacDonald95}
I.G. Macdonald.
\newblock {\em Symmetric functions and Hall polynomials}.
\newblock Oxford mathematical monographs. Clarendon Press, 1995.

\bibitem{MeagherM05}
Karen Meagher and Lucia Moura.
\newblock Erdos-ko-rado theorems for uniform set-partition systems.
\newblock {\em Electr. J. Comb.}, 12, 2005.

\bibitem{NewmanPhD}
Michael Newman.
\newblock {\em Independent Sets and Eigenspaces}.
\newblock PhD thesis, University of Waterloo, 7 2004.

\bibitem{Pak00}
I.~Pak.
\newblock Four questions on birkhoff polytope.
\newblock {\em Annals of Combinatorics}, 4(1):83--90, 2000.

\bibitem{Polster97}
Burkard Polster.
\newblock Abstract hyperovals and hadamard designs.
\newblock {\em Australas. J. Combin.}, 16:29--33, 1997.

\bibitem{Rands83}
B.M.I Rands.
\newblock An association scheme for the 1-factors of the complete graph.
\newblock {\em Journal of Combinatorial Theory, Series A}, 34(3):301 -- 312,
  1983.

\bibitem{Renteln07}
Paul Renteln.
\newblock On the spectrum of the derangement graph.
\newblock {\em Electr. J. Comb.}, 14(1), 2007.

\bibitem{RooneyPhD}
Brendan Rooney.
\newblock {\em Spectral Aspects of Cocliques in Graphs}.
\newblock PhD thesis, University of Waterloo, 5 2014.

\bibitem{Rothvoss14}
Thomas Rothvo{\ss}.
\newblock The matching polytope has exponential extension complexity.
\newblock In {\em STOC}, pages 263--272, 2014.

\bibitem{Thrall42}
R.~M. Thrall.
\newblock On symmetrized kronecker powers and the structure of the free lie
  ring.
\newblock {\em American Journal of Mathematics}, 64(1):pp. 371--388, 1942.

\end{thebibliography}

\end{document}